\tikzset{->-/.style={decoration={markings,mark=at position #1 with {\arrow{>}}},postaction={decorate}}}
\numberwithin{equation}{section}
\newtheorem{theorem}{Theorem}[section]
\newtheorem{lemma}[theorem]{Lemma}
\newtheorem{proposition}[theorem]{Proposition}
\newtheorem{corollary}[theorem]{Corollary}
\newtheorem{construction}[theorem]{Construction}
\theoremstyle{definition}
\newtheorem{definition}[theorem]{Definition}
\newtheorem{def-prop}[theorem]{Definition-Proposition}
\newtheorem{remark}[theorem]{Remark}
\newtheorem{example}[theorem]{Example}
\newtheorem*{acknowledgement}{Acknowledgement}
\newtheorem{notation}[theorem]{Notation}
\newtheorem*{Mysketch}{Sketch of proof} 
  {\pushQED{\qed}\begin{Mysketch}}
  {\popQED\end{Mysketch}}
  \newtheorem*{Myproof}{Proof of the Claim} 
  {\pushQED{\qed}\begin{Myproof}}
  {\popQED\end{Myproof}}
\DeclareMathOperator{\pol}{pol}
\DeclareMathOperator{\reg}{reg}
\DeclareMathOperator{\depth}{depth}
\DeclareMathOperator{\pd}{pd}
\DeclareMathOperator{\supp}{supp}
\newcommand{\G}{\mathcal{G}}
\newcommand{\D}{\mathcal{D}}
\newcommand{\C}{\mathcal{C}}
\newcommand{\mfS}{\mathcal{S}}
\newcommand{\dS}{\dot{\mathcal{S}}}
\newcommand{\BR}{{\Big\rfloor}}
\newcommand{\BL}{{\Big\lfloor}}
\newcommand{\ZZ}{{\mathbb Z}}
\def\G{{\mathcal G}}
\def\P{{\mathcal P}}
\def\G{{\mathcal G}}
\def\a{{\bf a}}
\def\p{{\bf p}}
\def\q{{\bf q}}
\def\1{{\bf 1}}
\def\0{{\bf 0}}
\begin{document}

\title{Algebraic invariants of weighted oriented graphs}

\author{Selvi K. Beyarslan}
\address{Department of Mathematics and Statistics, University of South Alabama, 411 University Boulevard North, Mobile, AL 36688, USA}
\email{selvi@southalabama.edu}
\urladdr{}

\author{Jennifer Biermann}
\address{Department of Mathematics and Computer Science, Hobart and William Smith Colleges \\
300 Pulteney St.
Geneva, NY 14456, USA}
\email{biermann@hws.edu}
\urladdr{}

\author{Kuei-Nuan Lin}
\address{Department of Mathematics,
Penn State University Greater Allegheny\\ 4000 University Dr, McKeesport, PA 15132, USA}
\email{kul20@psu.edu}
\urladdr{}

\author{Augustine O'Keefe}
\address{Department of Mathematics and Statistics, Connecticut College\\
270 Mohegan Avenue Pkwy,
New London, CT 06320, USA}
\email{aokeefe@conncoll.edu}
\urladdr{}

\keywords{regularity, projective dimension, monomial ideals, oriented graphs, hypergraphs.}
\subjclass[2010]{13D02, 05E40}

\begin{abstract} 
Let $\D$ be a weighted oriented graph and let $I(\D)$ be its edge ideal in a polynomial ring $R$. We give the formula of Castelnuovo-Mumford regularity of $R/I(\D)$ when $\D$ is a weighted oriented path or cycle such that edges of $\D$ are oriented in one direction. Additionally, we compute the projective dimension for this class of graphs. 
\end{abstract}

\maketitle


\section{Introduction}

A vertex-weighted (or simply weighted) oriented graph is a triple $\D = (V(\D), E(\D), w)$ where  $V(\D) = \{x_1, \dots, x_n\}$ is the vertex set of the graph, $E(\D)$ is a directed edge set, and $w$ is a weight function $w: V(\D) \to \mathbb{N^+}$. Specifically, $E(\D)$ consists of ordered pairs $(x_i, x_j) \in V(\D)\times V(\D)$ where the pair $(x_i,x_j)$ represents a directed edge from $x_i$ to $x_j$. We consider finite simple oriented graphs, that is graphs in which $V(\D)$ is a finite set and there are neither loops nor multiple edges in $E(\D)$.  Furthermore, we simplify notation by setting  $w_i = w(x_i)$ for each $i\in \{1,\ldots,n\}.$  If the weight of a vertex $x_i$ is equal to one, i.e. $w_i =1,$ we say that the graph has a \emph{trivial weight} at $x_i.$ Otherwise, we say $x_i$ has a \emph{non-trivial weight}.

Given a weighted oriented graph $\D = (V(\D), E(\D), w)$ and $R = k[x_1, \dots, x_n]$ the polynomial ring on the vertex set $V(\D)$ over a field $k$,  the edge ideal of $\D$ is defined to be
	\[
		I(\D) = \left(x_ix_j^{w_j} : (x_i, x_j) \in E(\D)\right) \subseteq R.
	\]
The generators of  $I(\mathcal{D})$ are independent of the weight assigned to a source vertex. Therefore to simplify our formulas, throughout this paper, we shall assume that source vertices always have weight one. Furthermore, since isolated vertices do not contribute to the generating set of the edge ideal of weighted oriented graph, we assume that our graphs have no isolated vertices.
When \emph{all} vertices have weight one, $I(\D)$ is the edge ideal of an unweighted unoriented graph which was introduced by Villarreal in \cite{V} and has since been studied extensively.  The study of edge ideals of weighted oriented graphs is much more recent and consequently there are many fewer results in this direction. The Cohen-Macaulayness of edge ideals of weighted oriented graphs has been studied in \cite{GMSV}, \cite{HLMRV}, \cite{PEJ} and \cite{VV}. Related to our work in this paper, the authors of \cite{ZW} consider the Castelnuovo-Mumford regularity (hereafter referred to as just regularity), and the projective dimension of a special case of weighted oriented paths and cycles.

The interest in studying edge ideals of weighted oriented graphs has its foundation in coding theory, specifically the study of Reed-Muller-type codes (see \cite{MPV}). Such codes arise as the image of a degree $d$ evaluation map of a given set of projective points over a finite field. The regularity of the vanishing ideal provides a threshold for the degree of the map indicating when a Reed-Muller-type code has sufficiently large minimal distance, and is thus considered ``good." The vanishing ideal is itself a binomial ideal whose initial ideal is exactly the edge ideal of an appropriately defined weighted oriented graph. Although perhaps a weaker bound, the regularity of the vertex-weighted oriented graph is easier to compute and, therefore, provides valuable information in eliminating ``bad" Reed-Muller-type codes.

In this paper we study the regularity and projective dimension of edge ideals of weighted oriented graphs with the goal of characterizing these algebraic invariants in terms of the combinatorial data of our weighted oriented graphs. To describe the generators of the edge ideal of a weighted oriented graph one needs to consider the structure of the underlying undirected graph, the orientation of its edges, and its weight-function. Thus it is quite a difficult problem to incorporate all of this data and provide general formulas for the regularity and the projective dimension of an arbitrary weighted oriented graph. As a natural first step we consider two basic structures: paths and cycles.  We further restrict our attention to weighted oriented paths and cycles with the natural orientation of all edges pointing in the same direction. We call these graphs \emph{weighted naturally oriented} paths and cycles. 

The main results of this paper provide formulas for the regularity of  weighted  naturally oriented paths (\Cref{path:gen}) and  weighted naturally oriented cycles (\Cref{cycle:gen}).  Our results place no restrictions on the weight-function associated to these graphs apart from the requirement that any source vertices have weight one.  The case in which all non-source vertices have non-trivial weights was considered in \cite{ZW}, the results of which are recovered in this paper.

While the majority of this paper focuses on the regularity of these edge ideals, one can also compute their projective dimension by viewing them as the ideals of string hypergraphs and cycle hypergraphs as studied in \cite{LMa1}. We translate the necessary notions from \cite{LMa1} in terms of weighted oriented paths and cycles and present the formulas for their projective dimensions (\Cref{pdPathLMa} and \Cref{pdCycleLinMa}), thus completing the discussion on the projective dimension of weighted naturally oriented paths and cycles where all non-source vertices can have arbitrary weight.

Our paper is organized as follows. In \Cref{pre}, we collect necessary terminology and results from the literature. In \Cref{hyper} we discuss the connection between edge ideals of weighted oriented graphs and labeled hypergraphs. In particular, we present the formulas for the projective dimension of weighted naturally oriented paths and cycles. \Cref{splitting} focuses on Betti splittings of monomial ideals in which we give a large class of Betti splittings of edge ideals of weighted oriented graphs to be used in later sections. In \Cref{paths} we introduce our first main result \Cref{path:gen}, which gives a formula for the regularity of weighted naturally oriented paths.  In \Cref{cycles} we extend this result to weighted naturally oriented cycles in \Cref{cycle:gen}.

\begin{acknowledgement} 
This material is based upon work supported by the National Security Agency under Grant No. H98230-19-1-0119, The Lyda Hill Foundation, The McGovern Foundation, and Microsoft Research, while the authors were in residence at the Mathematical Sciences Research Institute in Berkeley, California, during the summer of 2019. The authors would like to thank Mathematical Sciences Research Institute for their support and hospitality.
\end{acknowledgement}

\section{Preliminaries}\label{pre}
In this section, we collect the notation and terminology that will be used throughout the paper. We follow the convention of the standard text \cite{Villarreal}.

Let $R = k[x_1, \dots, x_n]$ be a polynomial ring over a field $k$ and $M$ be a finitely generated $R$ module.  
Then the minimal free resolution of $M$ is an exact sequence of the form 
	$$
 0\longrightarrow \bigoplus_{j \in \ZZ} R(-j)^{\beta_{p,j}(M)} \longrightarrow \bigoplus_{j \in \ZZ} R(-j)^{\beta_{p-1,j}(M)} \cdots \longrightarrow 
\bigoplus_{j \in \ZZ} R(-j)^{\beta_{0,j}(M)} \longrightarrow M \longrightarrow 0.$$ 

Since the minimal free resolution of a module is unique up to isomorphism, the exponents $\beta_{i,j}(M)$ are invariants of the module called the Betti numbers of $M$. In general, computing Betti numbers explicitly is intractable so we focus instead on coarser invariants which measure the complexity of the module. In particular, this paper focuses on studying the (Castelnuovo-Mumford) regularity and projective dimension of $M=R/I$ where $I$ is an ideal of $R$.  These invariants are defined as follows 
	$$\reg (M) = \max \{ j-i : \beta_{i,j} (M) \neq 0\}$$
	and 
	$$\pd (M) = \max \{ i : \beta_{i,j} (M) \neq 0\}.$$
Calculating or even estimating the regularity or projective dimension for a general ideal is a difficult problem. Thus we restrict our focus to edge ideals of vertex-weighted oriented graphs where we can exploit the combinatorial structure of the graph to give us information about the regularity and the projective dimension of the associated ideal.

Let $I$ be a homogeneous ideal in $R$ and $m$ be a monomial of degree $d$ with the following standard short exact sequence 
\begin{eqnarray}\label{eq:ses}
0 \rightarrow R/(I:m) (-d) \rightarrow R/I \rightarrow R/(I,m) \rightarrow 0.
\end{eqnarray}
One can then obtain the following well-known regularity relationships (see, for example \cite[Section 2.18] {Peeva}).
\begin{lemma}\label{reg}  Let $I$ be a homogeneous ideal in a polynomial ring $R = k[x_1, \dots, x_n]$ and let $m$ be a monomial of degree $d.$ Then
\begin{eqnarray*}
  \reg (R/I) &\leq& \max \{ \reg (R/(I:m))+d, \reg (R/(I,m))\} \\
  \reg (R/(I:m)) +d& \le &\max  \{ \reg (R/I) , \reg (R/(I,m))+1\}\\
 \reg (R/(I,m)) &\le &\max  \{ \reg (R/I) , \reg (R/(I:m)+d-1) \}. 
\end{eqnarray*}
\end{lemma}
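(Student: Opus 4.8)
The plan is to deduce all three inequalities from the long exact sequence in $\Tor$ obtained by tensoring the short exact sequence~\eqref{eq:ses} with the residue field $k$, together with the identity $\beta_{i,j}(M(-d)) = \beta_{i,j-d}(M)$, which gives $\reg\bigl(M(-d)\bigr) = \reg M + d$. Set $A = R/(I:m)(-d)$, $B = R/I$, and $C = R/(I,m)$, so that $\reg A = \reg(R/(I:m)) + d$. It then suffices to prove, for an arbitrary short exact sequence $0 \to A \to B \to C \to 0$ of finitely generated graded $R$-modules, the three abstract estimates
\[
\reg B \le \max\{\reg A,\ \reg C\},\qquad
\reg A \le \max\{\reg B,\ \reg C + 1\},\qquad
\reg C \le \max\{\reg B,\ \reg A - 1\},
\]
since substituting the values of $A$, $B$, and $C$ above converts these into the three displayed formulas. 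This is the standard fact recorded in, e.g., \cite[Section~2.18]{Peeva}; I would include a proof for completeness.

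First I would write down the long exact sequence
\[
\cdots \longrightarrow \Tor^R_{i+1}(C,k)_j \longrightarrow \Tor^R_i(A,k)_j \longrightarrow \Tor^R_i(B,k)_j \longrightarrow \Tor^R_i(C,k)_j \longrightarrow \Tor^R_{i-1}(A,k)_j \longrightarrow \cdots,
\]
which is exact in each fixed internal degree $j$. Recalling that $\beta_{i,j}(M) \ne 0$ exactly when $\Tor^R_i(M,k)_j \ne 0$ and that $\reg M = \max\{\, j - i : \Tor^R_i(M,k)_j \ne 0 \,\}$ (with the convention $\reg 0 = -\infty$), each of the three estimates follows from a one-step diagram chase. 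If $\Tor^R_i(B,k)_j \ne 0$, exactness forces $\Tor^R_i(A,k)_j \ne 0$ or $\Tor^R_i(C,k)_j \ne 0$, hence $j - i \le \max\{\reg A, \reg C\}$; maximizing over all such $(i,j)$ gives the first estimate. If $\Tor^R_i(A,k)_j \ne 0$, then either $\Tor^R_i(B,k)_j \ne 0$, so $j - i \le \reg B$, or $\Tor^R_{i+1}(C,k)_j \ne 0$, so $j - i = \bigl(j - (i+1)\bigr) + 1 \le \reg C + 1$; this gives the second. If $\Tor^R_i(C,k)_j \ne 0$, then either $\Tor^R_i(B,k)_j \ne 0$ or $\Tor^R_{i-1}(A,k)_j \ne 0$, and the latter yields $j - i = \bigl(j - (i-1)\bigr) - 1 \le \reg A - 1$; this gives the third.

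The argument is purely formal, so I do not anticipate a genuine obstacle; the only point requiring care is the homological shift in the connecting homomorphisms, which is precisely what produces the asymmetric $+1$ and $-1$ in the last two inequalities, and retaining the convention $\reg 0 = -\infty$ so the statements remain valid in the degenerate cases (e.g.\ when $m \in I$, or when $m$ is a nonzerodivisor on $R/I$, in which case one of the three modules vanishes). An equally quick alternative would be to run the same chase on the long exact sequence in local cohomology $H^\bullet_{\mm}(-)$ using $\reg M = \max\{\, i + a : H^i_{\mm}(M)_a \ne 0 \,\}$; I would mention this but carry out the $\Tor$ version, as it makes the passage to the Betti numbers and to $\reg$ most transparent. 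Substituting $\reg A = \reg(R/(I:m)) + d$ then yields the three displayed inequalities.
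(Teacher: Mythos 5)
Your proposal is correct, and it matches the paper's route: the paper does not prove this lemma itself but simply invokes the short exact sequence \eqref{eq:ses} and cites \cite[Section 2.18]{Peeva}, and your argument via the long exact sequence in $\Tor(-,k)$ applied to that same sequence, with the shift $\reg\bigl(R/(I:m)(-d)\bigr)=\reg(R/(I:m))+d$, is exactly the standard proof behind that citation. The diagram chase is carried out correctly, including the homological shifts that produce the $+1$ and $d-1$ in the last two inequalities, so nothing is missing.
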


When $m$ is a variable, we have the following special property regarding the regularity.

\begin{lemma}{\cite[Lemma 2.10]{DHS}}\label{oneVar}
Let $I$ be a monomial ideal, and $x$ is a variable in $\supp(I).$ Then $\reg  (R/(I,x)) \le \reg (R/I)$ and $\reg (R/I) \in \{\reg (R/(I:x))+1, \reg (R/(I,x))\}$.
\end{lemma}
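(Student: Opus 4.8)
The plan is to run the short exact sequence \eqref{eq:ses} with $m=x$ (so $d=1$), namely
$$0 \longrightarrow (R/(I:x))(-1) \xrightarrow{\ \cdot x\ } R/I \longrightarrow R/(I,x) \longrightarrow 0,$$
and combine the three inequalities of \Cref{reg} with one extra input that uses the hypothesis that $x$ is a \emph{variable} (not just a degree-one form): the monotonicity $\reg(R/(I,x)) \le \reg(R/I)$. This monotonicity does not follow from the exact-sequence formalism alone — for a general short exact sequence the quotient can have strictly larger regularity than the middle term — so it must be proved using the combinatorial structure.

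The first step is therefore to establish $\reg(R/(I,x)) \le \reg(R/I)$, which is both the first assertion of the lemma and the key ingredient for the second. Writing each minimal monomial generator of $I$ as $x^{a}n$ with $x\nmid n$, one has $(I,x)=(\bar I,x)$, where $\bar I$ is the monomial ideal generated by the minimal generators of $I$ not divisible by $x$; since $\bar I$ involves none of the variable $x$, passing to $\bar R=R/(x)$ gives $\reg(R/(I,x))=\reg(\bar R/\bar I)$. After polarizing (one checks routinely that polarization commutes with adjoining the variable $x$) we may assume $I$ is squarefree, $I=I_\Delta$; then $(I,x)$ is the Stanley--Reisner ideal of the complex obtained from $\Delta$ by deleting the vertex $x$ while keeping it in the ground set, and every induced subcomplex of the latter is an induced subcomplex of $\Delta$. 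Comparing Hochster's formula term by term (there are two families of contributions, shifted relative to each other) yields $\beta_{i,j}(R/(I,x)) \le \beta_{i,j}(R/I)+\beta_{i-1,j-1}(R/I)$ for all $i,j$, hence $\reg(R/(I,x)) \le \reg(R/I)$. Feeding this into the middle inequality of \Cref{reg}, namely $\reg(R/(I:x))+1 \le \max\{\reg(R/I),\,\reg(R/(I,x))+1\} \le \reg(R/I)+1$, also gives $\reg(R/(I:x)) \le \reg(R/I)$.

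For the dichotomy, the first inequality of \Cref{reg} gives $\reg(R/I) \le \max\{\reg(R/(I:x))+1,\,\reg(R/(I,x))\}$. If $\reg(R/I)=\reg(R/(I,x))$ we are done; otherwise the monotonicity forces $\reg(R/(I,x))<\reg(R/I)$, so the maximum above is $\reg(R/(I:x))+1$ and $\reg(R/I)\le\reg(R/(I:x))+1$. If equality holds we are again done, so suppose instead $\reg(R/I)\le\reg(R/(I:x))$; combined with $\reg(R/(I:x))\le\reg(R/I)$ this gives $\reg(R/(I:x))=\reg(R/I)=:r$. Choosing $i$ with $\beta_{i,i+r}(R/(I:x))\neq 0$, the submodule $(R/(I:x))(-1)$ of the displayed sequence has $\Tor_i^R((R/(I:x))(-1),k)_{i+r+1}\neq 0$; in the long exact $\Tor(-,k)$-sequence in internal degree $j=i+r+1$ the term $\Tor_i^R(R/I,k)_{i+r+1}$ vanishes since $(i+r+1)-i=r+1>\reg(R/I)$, so the connecting map forces $\Tor_{i+1}^R(R/(I,x),k)_{i+r+1}\neq 0$, whence $\reg(R/(I,x))\ge r=\reg(R/I)$, contradicting $\reg(R/(I,x))<\reg(R/I)$. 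So this last case cannot occur and the dichotomy follows.

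The main obstacle is the monotonicity step $\reg(R/(I,x))\le\reg(R/I)$: this is exactly where the hypotheses ``$I$ monomial, $x$ a variable'' are essential, and the comparison of Betti numbers through polarization and Hochster's formula — keeping track of the two separate contributions to $\beta_{i,j}(R/(I,x))$ — is the one place requiring genuine care. Everything else is a formal chase through the three regularity inequalities of \Cref{reg} together with a single long exact sequence.
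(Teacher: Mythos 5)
The paper offers no proof of this lemma; it is quoted verbatim from \cite[Lemma 2.10]{DHS}. Your argument is correct and is essentially the standard one behind that citation: the short exact sequence $0 \to (R/(I:x))(-1) \to R/I \to R/(I,x) \to 0$ together with the long exact sequence in $\Tor$, with the genuinely non-formal input being the monotonicity $\reg(R/(I,x)) \le \reg(R/I)$, which you correctly reduce via polarization to the squarefree case and settle by the induced-subcomplex (ghost-vertex) comparison in Hochster's formula; the final contradiction step, showing $\reg(R/(I:x)) = \reg(R/I)$ would force $\Tor_{i+1}(R/(I,x),k)_{i+r+1} \neq 0$, is exactly the right way to rule out the remaining case.
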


When discussing algebraic invariants of the edge ideal of a weighted oriented graph $\D$ we simplify notation and use $\reg (\D)$ (respectively $\pd(\D)$) to refer to $\reg (R/I(\D))$ (respectively $\pd(R/I(\D)).$

The following well-known results will be used throughout the paper. See \cite[Lemma 3.2]{HT} for the proof.

\begin{lemma}\label{lem:disjoint} 
Let $R_1 = k[x_1, \ldots , x_n], R_2= k[y_1, \ldots , y_m]$ be polynomial rings over disjoint variables and $R=k[x_1\ldots, x_n, y_1, \ldots, y_m].$ Suppose $I \subset R_1$ and $J\subset R_2$ be two nonzero homogeneous ideals. Then
\begin{enumerate}
\item $\reg (R/(I +J)) = \reg (R_1/I)+\reg (R_2/J),$
\item $\reg (R/(IJ)) = \reg (R_1/I) +\reg (R_2/J)+1.$

\end{enumerate}
\end{lemma}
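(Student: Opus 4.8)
The plan is to realize both $R/(I+J)$ and $IJ$ as external tensor products over $k$ of modules coming from $R_1$ and $R_2$, and then read the graded Betti numbers off of a K\"unneth-type argument; both equalities then drop out simultaneously, with no inequality chasing. (Throughout I assume, as is implicit in the statement, that $I$ and $J$ are proper, so that all the regularities that occur are nonnegative.)

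First I would set up the two identifications. Write $R=R_1\otimes_k R_2$ and keep the names $I,J$ for the extended ideals $IR,JR$. It is immediate that $R/(I+J)\cong(R_1/I)\otimes_k(R_2/J)$ as graded $R$-modules. For the product, one checks that $IJ=(IR)(JR)$, viewed as a graded $k$-subspace of $R$, coincides with the image of the (injective, since $k$ is a field) map $I\otimes_k J\hookrightarrow R_1\otimes_k R_2$: the ideal $(IR)(JR)$ is the $R$-span of $\{ad:a\in I,\,d\in J\}$, and $(c\otimes e)(a\otimes 1)(1\otimes d)=ca\otimes ed$ with $ca\in I$ and $ed\in J$, so this span is exactly the $k$-span of $\{a'\otimes d':a'\in I,\,d'\in J\}$, which is already an $R$-submodule. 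Hence $IJ\cong I\otimes_k J$ as graded $R$-modules.

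The key step is the following resolution statement: if $F_\bullet\to M$ is a minimal graded free resolution over $R_1$ and $G_\bullet\to N$ is one over $R_2$, then the complex $F_\bullet\otimes_k G_\bullet$, with $(F\otimes_k G)_n=\bigoplus_{p+q=n}F_p\otimes_k G_q$ and differential $\partial_F\otimes 1\pm 1\otimes\partial_G$, is a minimal graded free resolution of $M\otimes_k N$ over $R$. Freeness follows from $R_1(-a)\otimes_k R_2(-b)\cong R(-a-b)$; exactness is the K\"unneth formula over the field $k$, which forces the homology of $F_\bullet\otimes_k G_\bullet$ to be concentrated in homological degree $0$ and equal to $M\otimes_k N$; and minimality holds because the entries of $\partial_F\otimes 1$ and of $1\otimes\partial_G$ lie in $(x_1,\dots,x_n)R$ and $(y_1,\dots,y_m)R$ respectively, hence in the homogeneous maximal ideal of $R$. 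Consequently $\beta^R_{i,j}(M\otimes_k N)=\sum_{p+q=i,\,a+b=j}\beta^{R_1}_{p,a}(M)\,\beta^{R_2}_{q,b}(N)$.

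Applying this with $M=R_1/I$ and $N=R_2/J$ proves part (1): the left side is nonzero exactly when there are $p+q=i$ and $a+b=j$ with $\beta^{R_1}_{p,a}(R_1/I)\neq 0$ and $\beta^{R_2}_{q,b}(R_2/J)\neq 0$, and since the attainable values of $a-p$ and of $b-q$ vary independently, $\max(j-i)=\max(a-p)+\max(b-q)=\reg(R_1/I)+\reg(R_2/J)$. For part (2), applying the statement with $M=I$ and $N=J$ gives $\reg_R(IJ)=\reg_R(I\otimes_k J)=\reg_{R_1}(I)+\reg_{R_2}(J)$; combining this with the elementary identity $\reg_S(\mathfrak a)=\reg_S(S/\mathfrak a)+1$ for a proper nonzero homogeneous ideal $\mathfrak a\subset S$ (which follows from $\beta_{i,j}(S/\mathfrak a)=\beta_{i-1,j}(\mathfrak a)$ for $i\ge 1$ together with $\beta_{0,0}(S/\mathfrak a)=1$) yields $\reg(R/IJ)=\reg_R(IJ)-1=(\reg(R_1/I)+1)+(\reg(R_2/J)+1)-1=\reg(R_1/I)+\reg(R_2/J)+1$. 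The step requiring the most care is the resolution statement — getting the K\"unneth exactness argument and the minimality bookkeeping right over the tensor-product ring — together with being scrupulous about the two module identifications in the second paragraph; once these are in place, the rest is just arithmetic with graded Betti numbers. (An alternative for part (2) that avoids the identification $IJ\cong I\otimes_k J$ uses the short exact sequence $0\to R/(I\cap J)\to R/I\oplus R/J\to R/(I+J)\to 0$ together with $I\cap J=IJ$ and the regularity inequalities of \Cref{reg}, but this route is less uniform.)
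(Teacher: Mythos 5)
Your proof is correct: the identifications $R/(I+J)\cong(R_1/I)\otimes_k(R_2/J)$ and $IJ\cong I\otimes_k J$, the tensor product of minimal graded free resolutions (with K\"unneth exactness over $k$, freeness from $R_1(-a)\otimes_k R_2(-b)\cong R(-a-b)$, and minimality from the differentials landing in the graded maximal ideal), the resulting formula $\beta^R_{i,j}=\sum_{p+q=i,\,a+b=j}\beta^{R_1}_{p,a}\beta^{R_2}_{q,b}$, and the shift $\reg_R(IJ)=\reg_R(R/IJ)+1$ for the nonzero proper ideal $IJ$ all check out. The paper itself offers no proof, deferring to \cite[Lemma 3.2]{HT}, and your K\"unneth-type argument is the standard one underlying that reference, so your write-up is a correct self-contained substitute rather than a genuinely different route.
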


\begin{remark}\label{rem:Ix}
    There are situations where we want to consider the regularity of the quotient of ideals of the form $(I(\D),x)$ where $x\not\in\supp(I(\D))$. The above lemma yields the equality $\reg(R/(I(\D),x))=\reg(R/I(\D))$. 
\end{remark}

In our proofs, we often deal with the underlying graphs of weighted oriented paths and cycles. The regularity of the edge ideals of these underlying graphs is known and we recall the formulas of regularity for paths and cycles below.

\begin{theorem}\label{rem:pc} \cite[Corollary 7.6.28 and 7.7.34]{J}
Let $P_n$ denote a path on $n$ vertices and $C_n$ denote a cycle on $n$ vertices. Then $\reg (P_n) = \reg(C_n) = \BL{\frac{n+1}{3}}\BR.$
\end{theorem}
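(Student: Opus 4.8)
\begin{sketch}
Write $f(n)=\big\lfloor\tfrac{n+1}{3}\big\rfloor$, so that $f(n)=f(n-3)+1$ and $f(n-1)\le f(n)$ for all $n$; set $P_0=\emptyset$ with $\reg(P_0)=0$ (so $f(0)=0$). The plan is to reduce each of $\reg(P_n)$ and $\reg(C_n)$, via the sequence \eqref{eq:ses}, to a two-term recursion, and then to pin the recursion down with a combinatorial lower bound coming only from \Cref{oneVar} and \Cref{lem:disjoint}; one exceptional case will need an extra topological input.

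\emph{Paths.} Induct on $n$; the cases $n\le 3$ follow from \Cref{lem:disjoint} (e.g.\ $I(P_3)=(x_2)(x_1,x_3)$ gives $\reg(P_3)=1$). For $n\ge 4$, apply \Cref{oneVar} with the variable $x_2$, whose neighbours are $x_1,x_3$: one computes $(I(P_n):x_2)=(x_1,x_3)+I(P')$ with $P'$ the subpath on $x_4,\dots,x_n$, and $(I(P_n),x_2)=(x_2)+I(P'')$ with $P''$ the subpath on $x_3,\dots,x_n$. Discarding the variables outside the supports of these ideals, which does not affect regularity (cf.\ \Cref{rem:Ix} and \Cref{lem:disjoint}), gives $\reg(R/(I(P_n):x_2))=\reg(P_{n-3})$ and $\reg(R/(I(P_n),x_2))=\reg(P_{n-2})$, so by \Cref{oneVar} and the inductive hypothesis
\[
\reg(P_n)\in\{\reg(P_{n-3})+1,\ \reg(P_{n-2})\}=\{f(n),\ f(n-2)\}.
\]
To decide which value occurs, observe that $\{x_1x_2,\,x_4x_5,\,x_7x_8,\dots\}$ is an induced matching of $P_n$ of size $f(n)$; setting every other variable to zero and applying \Cref{oneVar} repeatedly, $\reg(P_n)\ge\reg\bigl(k[x_1,x_2,x_4,x_5,\dots]/(x_1x_2,\,x_4x_5,\dots)\bigr)=f(n)$ by \Cref{lem:disjoint}(1). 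Since $f(n-2)\le f(n)$, we conclude $\reg(P_n)=f(n)$.

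\emph{Cycles.} For the upper bound, apply \Cref{oneVar} to $I(C_n)$ at any vertex $x$: deleting $x$ leaves the path $P_{n-1}$ and deleting its closed neighbourhood leaves $P_{n-3}$, so exactly as for paths $\reg(C_n)\in\{\reg(P_{n-3})+1,\ \reg(P_{n-1})\}=\{f(n),\ f(n-1)\}$, hence $\reg(C_n)\le f(n)$. For a first lower bound, setting one vertex of $C_n$ to zero gives $\reg\bigl(R/(I(C_n),x)\bigr)=\reg(P_{n-1})=f(n-1)$, which is $\le\reg(C_n)$ by \Cref{oneVar}. When $n\not\equiv 2\pmod 3$ one has $f(n-1)=f(n)$ and the proof is complete.

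The case $n\equiv 2\pmod 3$ is the main obstacle: here $f(n-1)=f(n)-1$, and the induced matching number of $C_n$ is only $\lfloor n/3\rfloor=f(n)-1$, so no argument through \Cref{oneVar} and \Cref{lem:disjoint} alone can reach $f(n)$; a nonzero Betti number must be exhibited directly. By Hochster's formula, $\beta_{i,n}(R/I(C_n))=\dim_k\widetilde{H}^{n-i-1}(\mathrm{Ind}(C_n);k)$, where $\mathrm{Ind}(C_n)$ is the independence complex of $C_n$ (the Stanley--Reisner complex of $I(C_n)$). For $n\equiv 2\pmod 3$ this complex is homotopy equivalent to a sphere of dimension $f(n)-1$, so $\widetilde{H}^{f(n)-1}(\mathrm{Ind}(C_n))\ne 0$ and hence $\beta_{n-f(n),\,n}(R/I(C_n))\ne 0$, forcing $\reg(C_n)\ge f(n)$. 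The required homotopy type can be cited, or derived inductively from the Mayer--Vietoris sequence attached to $\mathrm{Ind}(C_n)=\mathrm{star}(x)\cup\mathrm{del}(x)$, whose deletion and link are $\mathrm{Ind}(P_{n-1})$ and $\mathrm{Ind}(P_{n-3})$ --- the topological mirror of the algebraic recursion. One could also stay algebraic by tracking the long exact $\Tor$ sequence of \eqref{eq:ses} with $m=x$ and verifying that the top-regularity class of $R/(I(C_n):x)(-1)$ is not cancelled, at the cost of additionally controlling the projective dimensions of the path ideals that appear.
\end{sketch}
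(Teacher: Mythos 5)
Your argument is correct in outline, but it is worth noting that the paper does not prove this statement at all: \Cref{rem:pc} is quoted from Jacques's thesis \cite{J}, where the full graded Betti numbers of paths and cycles are computed (via Hochster's formula and the homology of induced subcomplexes). Your route is a more economical, targeted one. For paths, the recursion $\reg(P_n)\in\{\reg(P_{n-3})+1,\reg(P_{n-2})\}$ from \Cref{oneVar} together with the induced-matching lower bound (specializing variables and using \Cref{lem:disjoint}) is complete and uses only tools already stated in the paper; the same is true for cycles with $n\not\equiv 2\pmod 3$. For $n\equiv 2\pmod 3$ you correctly identify that this elementary scheme cannot close the gap (the induced matching number is $\lfloor n/3\rfloor=\lfloor\frac{n+1}{3}\rfloor-1$) and you import Hochster's formula plus the homotopy type $\mathrm{Ind}(C_{3k+2})\simeq S^{k}$ (Kozlov, or Jacques himself); with $|W|=n$ forced in Hochster's formula, the nonvanishing of $\widetilde{H}^{\lfloor(n+1)/3\rfloor-1}$ indeed gives $\beta_{n-\lfloor(n+1)/3\rfloor,\,n}\neq 0$ and hence the missing lower bound. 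What your approach buys is a short proof of exactly the regularity statement, at the cost of one external topological input not available inside the paper (Hochster's formula and the independence-complex computation); what the cited source buys is the complete Betti table, from which the regularity formula is read off. Two small caveats if you want to make the sketch airtight: the proposed Mayer--Vietoris derivation of the sphere also needs the homotopy type (or at least the reduced homology) of $\mathrm{Ind}(P_m)$ as input, so it is an induction on paths and cycles simultaneously rather than a one-line mirror of the algebraic recursion; and in the path recursion the degenerate cases $n=4,5$ (where the colon ideal leaves a path with no edges, so \Cref{lem:disjoint} as stated does not literally apply to a zero ideal) should be handled by the convention you set or checked directly.
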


A useful tool in the study of monomial ideals is the process of polarization which allows us to pass from a general monomial ideal to a squarefree monomial. We recall here the definition of polarization (for more information on the subject see for example \cite{Peeva}).  

\begin{definition}{ \cite[Construction 21.7]{Peeva}}
Let $R=k[x_1,\ldots, x_n]$ be a polynomial ring over a field $k.$ Given a $n$-tuple $\a=(a_1, \ldots, a_n) \in \ZZ_{\ge 0}^n,$ let $\bf{x^a}$ denote the monomial $x_1^{a_1}\cdots x_n^{a_n} \in R.$ 
\begin{enumerate}
\item The polarization of $\bf{x^a}$ is defined to be $(x^{\a})^{\pol} ,$ where $(\bullet)^{\pol}$ replaces $x_i^{a_i}$ by a product of distinct variables $\prod_{j=1}^{a_i} x_{i,j}.$ \\
\item Let $I=(\bf{x^{a_1}}, \ldots, \bf{x^{a_r}}) \subseteq$$R$ be a monomial ideal. The \emph{polarization} of $I$ is defined to be the ideal $I^{\pol} = ((\bf{x^{a_1}})^{\pol}, \ldots, (\bf{x^{a_r}})^{\pol})$ in a new polynomial ring $R^{\pol} = k [x_{i,j} ~|~ 1 \le i \le n, 1 \le j \le p_i],$ where $p_i$ is the maximum power of $x_i$ appearing in $ \bf{x^{a_1}}, \ldots, \bf{x^{a_r}}.$
\end{enumerate}
\end{definition}

Note that for any monomial ideals $J$ and $K$ we have $(J+K)^{\pol}=J^{\pol}+K^{\pol}.$ 

Polarization is particularly useful as the polarized ideal shares the same Betti numbers as the original ideal, as stated in the following lemma from \cite{HH}. This allows us to utilize the combinatorial structure of objects associated to the squarefree polarized ideal, such as hypergraphs and simplicial complexes, to characterize the algebraic invariants of the original ideal.  

\begin{lemma}{\cite[Corollary 1.6.3]{HH}}\label{pol}
Let $I \subset R$ be a monomial ideal and $I^{\pol} \subset R^{\pol}$ its polarization. Then
\begin{enumerate}
\item $\beta_{i,j} (R/I) =\beta_{i,j} (R^{\pol}/I^{\pol})$ for all $i,j \ge0,$\\
 \item $\pd (R/I) =\pd (R^{\pol}/I^{\pol})$ and $\reg (R/I) =\reg (R^{\pol}/I^{\pol}).$ 
\end{enumerate}
\end{lemma}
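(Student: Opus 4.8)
The plan is to derive everything from one structural fact: that passing from the polarized data $(R^{\pol},I^{\pol})$ back to $(R,I)$ amounts to dividing out a regular sequence of linear forms, and that such a quotient preserves all graded Betti numbers. Note first that (2) is a formal consequence of (1): for any finitely generated graded $R$-module $M$ we have $\pd(M)=\max\{i:\beta_{i,j}(M)\neq 0\}$ and $\reg(M)=\max\{j-i:\beta_{i,j}(M)\neq 0\}$, so once all the graded Betti numbers of $R/I$ and $R^{\pol}/I^{\pol}$ coincide, so do their projective dimensions and regularities. Hence the whole content is part (1).

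For (1), write $R^{\pol}=k[x_{i,j}\mid 1\le i\le n,\ 1\le j\le p_i]$ and identify $x_i$ with $x_{i,1}$, so that the substitution $x_{i,j}\mapsto x_{i,1}$ for $j\ge 2$ gives a surjection $R^{\pol}\twoheadrightarrow R$ carrying $I^{\pol}$ onto $I$; equivalently $R^{\pol}/(I^{\pol}+L)\cong R/I$, where $L$ is generated by the linear forms $x_{i,j}-x_{i,1}$ with $2\le j\le p_i$. The key claim is that these linear forms form a regular sequence on $R^{\pol}/I^{\pol}$. Granting this, I would finish by invoking the standard fact that if $\ell$ is a linear form that is a non-zerodivisor on a graded module $N$ over a polynomial ring $A$, then $\beta^A_{i,j}(N)=\beta^{A/\ell}_{i,j}(N/\ell N)$ for all $i,j$: tensoring a minimal graded free resolution of $N$ with $A/\ell$ keeps it minimal (because $\ell$ lies in the maximal graded ideal) and keeps it acyclic (because $\ell$ is a non-zerodivisor on $N$, so the complex computes $\Tor$). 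Iterating along the regular sequence $L$ then yields $\beta_{i,j}(R^{\pol}/I^{\pol})=\beta_{i,j}(R/I)$, which is (1).

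To establish the regular-sequence claim I would reduce to the case of a single new variable. By a straightforward induction it suffices to handle one partial-polarization step, in which one power of a single variable $x$ is peeled off the generators that need it into a fresh variable $y$, producing a monomial ideal $I'\subset R[y]$ with $R[y]/(I',x-y)\cong R/I$, and to show that $x-y$ is a non-zerodivisor on $R[y]/I'$ (iterating such steps reaches $I^{\pol}$, and the successive linear forms assemble to $L$). Since $I'$ is a monomial ideal its associated primes are generated by subsets of the variables, and a monomial prime contains $x-y$ only if it contains both $x$ and $y$; so the claim reduces to checking that no associated prime of $R[y]/I'$ contains both $x$ and $y$. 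Proving this last point — by inspecting the minimal generators of $I'$ and the resulting irreducible monomial decomposition, and showing that any colon ideal $(I':v)\neq R[y]$ containing both $x$ and $y$ would force $v\in I'$ — is, I expect, the main obstacle; the bookkeeping confirming that the iterated one-step reductions actually recover $I$ from $I^{\pol}$ also requires some care. Everything else is routine homological algebra.

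Finally, I would note an alternative that avoids the associated-primes analysis altogether: polarization induces an isomorphism of lcm-lattices preserving total degrees, so (1) follows from the theorem of Gasharov--Peeva--Welker that multigraded Betti numbers are determined by the lcm-lattice, at the cost of importing that result.
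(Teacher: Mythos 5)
This lemma is not proved in the paper at all: it is quoted from \cite[Corollary 1.6.3]{HH}, and your outline is essentially the standard argument from that cited source --- part (2) is formal from part (1), and part (1) holds because $R/I$ is obtained from $R^{\pol}/I^{\pol}$ by factoring out the linear forms $x_{i,j}-x_{i,1}$, which form a regular sequence (this is \cite[Proposition 1.6.2]{HH}), and factoring out a linear nonzerodivisor preserves graded Betti numbers. The step you flag as the main obstacle does go through along the lines you indicate: in a one-step partial polarization (peeling the top power $x^{p}$ into $x^{p-1}y$) every minimal generator of $I'$ has $x$-degree at most $p-1$, so if a monomial $v\notin I'$ satisfied $xv\in I'$ and $yv\in I'$, the generator dividing $yv$ but not $v$ would force $x^{p-1}\mid v$ and $y\nmid v$, and then any generator dividing $xv$ but not $v$ would need $x$-degree at least $p$, a contradiction; hence no associated prime of $R[y]/I'$ contains both $x$ and $y$, and $x-y$ is regular, so your plan is correct and matches the cited proof.
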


There are several ways to relate a squarefree monomial ideal $I$ with a hypergraph.  The common association is obtained by defining the edges of the hypergraph from the generators of the ideal so that $I$ is the edge ideal of the hypergraph. However, in this paper we study the \emph{labeled} hypergraph associated to a given squarefree monomial ideal $I$ by using the construction introduced in \cite{LMc}. In this special construction generators of the ideal correspond to vertices of the hypergraph, and the edges of the hypergraph correspond to variables which are obtained by the divisibility relations between the minimal generators of the ideal. 
We refer an interested reader to \cite{LMc} for details on labeled hypergraphs.

\begin{construction}{\cite{LMc}}\label{cons:labeled}
Let $I\subseteq R= k[x_1,\ldots ,x_n]$ be a squarefree monomial ideal with minimal monomial generating set $\{f_{1},\cdots,f_{m}\}.$ The \emph{labeled hypergraph} of $I$ is the tuple $H(I) = (V(H), E(H), X(H))$.  The set $V(H)=\{1,\ldots,m\}$ is called the vertex set of $H,$.   The set $E(H)$ is called the edge set of $H(I)$ and is the image of the function $\phi: \{x_1,\ldots, x_n\} \rightarrow 2^V$  defined by $\phi(x_i) =\{j ~:~  x_i \text{ divides } f_j\}$ where $2^V$ represents the power set of $V$. The set  $X(H)=\{ x_i : \phi(x_i) \neq \emptyset\}.$  

The label of an edge $F \in E$ is defined as the collection of variables $ x_i \in \{x_1,\ldots ,x_n\}$ such that $\phi(x_i)=F.$ The number $|X(H)|$ counts the number of labels appearing in $H(I).$
\end{construction}

It should be noted that the underlying hypergraph of $H(I)$ is exactly the \emph{dual hypergraph} (see \cite{Berge}) of the hypergraph whose edge ideal is $I.$ 

\begin{example}\label{ex:hyper}
Let $I=(x_{1}x_{2},x_{2}x_{3}x_{4},x_{3}x_{4}x_{5},x_{5}x_{2}) \subseteq k[x_1, \ldots, x_6].$ Set $f_1 = x_1x_2, f_2 = x_{2}x_{3}x_{4}, f_3 = x_{3}x_{4}x_{5},$ and $f_4 = x_{5}x_{2}$.  Then $V(H)=\{1,2,3,4\},$ $E(H)=\{\{1\},\{1,2,4\},\{2,3\},\{3,4\}\},$ and $X(H)=\{ x_1,x_2,x_3,x_4,x_5\}$. See \Cref{fig:hyper}.
\end{example}

\begin{figure}[h]\caption{The labeled hypergraph of $I = (x_{1}x_{2},x_{2}x_{3}x_{4},x_{3}x_{4}x_{5},x_{5}x_{2})$.}\label{fig:hyper} 
\begin{center}
\includegraphics[scale=1]{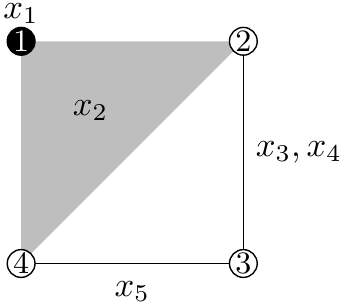}
\end{center}
\end{figure}

Given a weighted oriented graph $\D,$ we can associate the ideal $I^{\pol}(\D)$ with a labeled hypergraph by using \Cref{cons:labeled} and denote this labeled hypergraph by $H(\D).$

\section{Labeled hypergraphs associated to edge ideals}\label{hyper}

In this section, we employ the labeled hypergraph construction as the main tool to study the algebraic invariants of $R/I(\D)$ for a weighted oriented graph $\D.$ We collect results from the literature on labeled hypergraphs and provide their immediate applications to our objects. The first part of this section presents formulas for the regularity and projective dimension of weighted oriented paths and cycles with non-trivial weights at all non-source vertices. The second part presents a formula for the projective dimension of \emph{any} weighted naturally oriented path and cycle, thus completing the investigation into projective dimension for these classes of weighted oriented graphs. Furthermore, we provide a corollary that presents a closed formula for the regularity and projective dimension of a large class of weighted oriented graphs.

We begin with the following result from \cite{LMc} which relates the regularity and projective dimension of $R/I(\D)$ to the graph-theoretical invariants of the labeled hypergraph $H(\D).$ Here, we present a translated version of the original statement of \cite[Proposition 4.1]{LMc}.

\begin{proposition}\label{Prop 4.1-LMc}  
Let  $\D$ be a weighted oriented graph with associated labeled hypergraph $H=H(\D)$.  If $\{i\}\in E(H)$ for all $i\in[m]=V(H),$ then $\reg (\D)=|X(H)|-|V(H)|$ and $\pd(\D)=|V(H)|.$
\end{proposition}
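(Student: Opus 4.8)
The proposition is, as the text indicates, a reformulation of \cite[Proposition 4.1]{LMc} for the polarized ideal, so I would organize the proof in two moves: a reduction to the squarefree setting and a combinatorial computation carried out on the Taylor complex. By definition $H(\D)=H(I^{\pol}(\D))$, and $I^{\pol}(\D)\subseteq R^{\pol}$ is a squarefree monomial ideal with the same number $m=|V(H)|$ of minimal generators as $I(\D)$. By \Cref{pol} we have $\reg(\D)=\reg(R^{\pol}/I^{\pol}(\D))$ and $\pd(\D)=\pd(R^{\pol}/I^{\pol}(\D))$, so it suffices to prove the two formulas for the squarefree ideal $J:=I^{\pol}(\D)=(f_1,\dots,f_m)$ and its labeled hypergraph $H$. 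At this point one may simply quote \cite[Proposition 4.1]{LMc}, whose hypothesis is exactly that $\{i\}\in E(H)$ for every $i$; I sketch below the self-contained argument one would give if one does not wish to black-box that result. Note first that, by \Cref{cons:labeled}, the condition $\{i\}\in E(H)$ for all $i$ translates to: each $f_i$ is divisible by a variable $z_i$ that divides no other $f_j$; call such a $z_i$ a \emph{private variable} of $f_i$, and observe the $z_i$ are pairwise distinct.

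\textbf{Projective dimension.} The key observation is that in the presence of private variables the Taylor resolution of $R^{\pol}/J$ is already minimal: for any $\sigma\subseteq[m]$ and any $j\in\sigma$, the variable $z_j$ divides $\lcm(f_k:k\in\sigma)$ but not $\lcm(f_k:k\in\sigma\setminus\{j\})$, so these two least common multiples differ, which is the standard minimality criterion for the Taylor complex. Since the Taylor complex of an ideal with $m$ generators has length $m$, we conclude $\pd(R^{\pol}/J)=m=|V(H)|$, and hence $\pd(\D)=|V(H)|$.

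\textbf{Regularity.} From the (now minimal) Taylor complex one reads off $\reg(R^{\pol}/J)=\max\{\deg\lcm(f_k:k\in\sigma)-|\sigma|:\emptyset\neq\sigma\subseteq[m]\}$. Taking $\sigma=[m]$ gives the value $\deg\lcm(f_1,\dots,f_m)-m=|X(H)|-|V(H)|$, because $\lcm(f_1,\dots,f_m)$ is the squarefree monomial supported on all variables occurring in $J$, of which there are $|X(H)|$ by \Cref{cons:labeled}. It remains to see this is the maximum, i.e. that $\deg\lcm(f_k:k\in\sigma)-|\sigma|\le |X(H)|-m$ for every nonempty $\sigma$, equivalently $|X(H)|-\deg\lcm(f_k:k\in\sigma)\ge m-|\sigma|$. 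The left-hand side counts the variables $x$ with $\phi(x)\cap\sigma=\emptyset$; for each $i\in[m]\setminus\sigma$ the private variable $z_i$ is such a variable, and the $z_i$ are distinct, so there are at least $m-|\sigma|$ of them. This gives the inequality, hence $\reg(R^{\pol}/J)=|X(H)|-|V(H)|$, and therefore $\reg(\D)=|X(H)|-|V(H)|$.

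\textbf{Main obstacle.} There is essentially no obstacle: the only substantive points are the one-line minimality check for the Taylor complex and the counting inequality $|X(H)|-\deg\lcm(f_k:k\in\sigma)\ge m-|\sigma|$; everything else is bookkeeping about polarization (\Cref{pol}) and the dictionary between $\D$, the squarefree ideal $I^{\pol}(\D)$, and the labeled hypergraph $H(\D)$. If one is content to invoke \cite[Proposition 4.1]{LMc} directly, the proof reduces to the polarization step alone.
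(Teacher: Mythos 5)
Your proposal is correct. The paper itself offers no argument for this proposition beyond presenting it as a translation of \cite[Proposition 4.1]{LMc} to the setting of $H(\D)=H(I^{\pol}(\D))$, so your first route (use \Cref{pol} to pass to the squarefree ideal $I^{\pol}(\D)$, then invoke \cite[Proposition 4.1]{LMc}) is exactly the paper's implicit reasoning. Your self-contained argument is a genuine addition and it checks out: the private variables $z_i$ furnished by the hypothesis $\{i\}\in E(H)$ make every Taylor differential entry a nonunit, so the Taylor complex is the minimal free resolution, giving $\pd=m$ at once; and your counting inequality $|X(H)|-\deg\lcm(f_k:k\in\sigma)\ge m-|\sigma|$, witnessed by the private variables of the indices outside $\sigma$, correctly shows the maximum of $\deg\lcm(\sigma)-|\sigma|$ is attained at $\sigma=[m]$, yielding $\reg=|X(H)|-|V(H)|$. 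What the extra argument buys is independence from \cite{LMc} (and it in fact reproves that special case of their Proposition 4.1); what the citation route buys is brevity, which is all the paper needed.
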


Taking $\D$ to be a weighted oriented path or cycle in the above proposition results in explicit expressions for the regularity and projective dimension of the weighted oriented graph. The following corollary is a special case of the above result. Recall that  a vertex $x \in V(\D)$ is called a \emph{leaf}  in $\D$ if there is only one edge incident to $x$, and we assume the source vertices all have weight $1$.  

\begin{corollary}\label{trivial}
Let $\D$ be a weighted oriented graph with weight function $w$ on the vertices $\{x_1,\ldots, x_n\}$ with the property that there is at most one edge oriented into each vertex.  Suppose that for all non-leaf, non-source vertices, $x_j$, either $w_j \geq 2$ or the unique edge $(x_i, x_j)$ into the vertex $x_j$ has the property that $x_i$ is a leaf.  Then
\begin{eqnarray*}
\reg (\D)&=& \sum_{i = 1}^n w_i - |E(\D)|,\\
\pd(\D) &=& |E(\D)|.
\end{eqnarray*}

\end{corollary}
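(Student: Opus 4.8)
The plan is to deduce \Cref{trivial} from \Cref{Prop 4.1-LMc} by verifying that the hypothesis $\{i\} \in E(H)$ holds for all $i \in V(H)$ when $\D$ is a weighted oriented graph satisfying the stated conditions, and then translating the invariants $|V(H)|$ and $|X(H)|$ back into the combinatorial data of $\D$. First I would set up the correspondence: by definition of $I(\D)$, each directed edge $(x_i,x_j) \in E(\D)$ gives a generator $x_i x_j^{w_j}$, and after polarization this becomes the squarefree monomial $x_i x_{j,1} x_{j,2} \cdots x_{j,w_j}$. Thus the minimal generators of $I^{\pol}(\D)$ are indexed by $E(\D)$, so $|V(H)| = |E(\D)|$ immediately. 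The total number of distinct variables appearing is $\sum_{i=1}^n w_i$ (using that source vertices have weight $1$, so the variable $x_i$ for a source is counted once, and each non-source vertex $x_j$ contributes the $w_j$ polarized variables $x_{j,1},\dots,x_{j,w_j}$); each of these variables $\phi$-maps to a nonempty set since it divides at least one generator, hence $|X(H)| = \sum_{i=1}^n w_i$. Granting the hypothesis of \Cref{Prop 4.1-LMc}, we then get $\reg(\D) = |X(H)| - |V(H)| = \sum w_i - |E(\D)|$ and $\pd(\D) = |V(H)| = |E(\D)|$, which is exactly the claim.

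The substantive step — and the one I expect to be the main obstacle — is showing that $\{i\} \in E(H)$ for every vertex $i$ of $H$, i.e., that for every edge $(x_i,x_j) \in E(\D)$ with associated generator $f = x_i x_j^{w_j}$, there is some polarized variable that divides $f$ and divides no other minimal generator of $I^{\pol}(\D)$. I would argue by cases according to the type of the non-source vertex $x_j$ receiving the edge. If $w_j \ge 2$, then among the polarized variables $x_{j,1},\dots,x_{j,w_j}$, the ones with index $\ge 2$ can only appear in a polarized generator coming from an edge oriented \emph{into} $x_j$; since by hypothesis there is at most one edge oriented into $x_j$ — namely $(x_i,x_j)$ itself — the variable $x_{j,2}$ divides $f$ and no other generator, so $\phi(x_{j,2}) = \{i\}$ and $\{i\} \in E(H)$. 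If instead $w_j = 1$ (so $x_j$ is a non-source vertex with trivial weight), the hypothesis forces $x_j$ to be a leaf or the tail $x_i$ to be a leaf. If $x_i$ is a leaf, then $x_i$ appears in no other generator (a leaf is incident to only one edge), so $\phi(x_i) = \{i\}$. If $x_j$ is a leaf (and receives the edge $(x_i,x_j)$), then the only edge incident to $x_j$ is $(x_i,x_j)$, so the variable $x_j$ (weight one, no polarization needed) divides only $f$ among the generators, giving $\phi(x_j) = \{i\}$. In every case we produce a singleton edge $\{i\}$, so the hypothesis of \Cref{Prop 4.1-LMc} is met.

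One technical point to handle carefully is the bookkeeping of which polarized variables can be shared between generators. A variable $x_{\ell,t}$ with $t \ge 2$ arises only from polarizing $x_\ell^{w_\ell}$ in a generator $x_k x_\ell^{w_\ell}$ for some edge $(x_k,x_\ell)$ oriented into $x_\ell$; the variable $x_{\ell,1}$ (equivalently $x_\ell$ when $w_\ell=1$) can additionally appear as the \emph{tail} variable $x_i$ in a generator $x_\ell x_m^{w_m}$ for an edge $(x_\ell, x_m)$ oriented out of $x_\ell$. I would state this sharing structure explicitly as a short observation before the case analysis so the case checks are clean. I should also note that the hypothesis ``at most one edge oriented into each vertex'' is used twice: once to guarantee $|V(H)| = |E(\D)|$ has no hidden coincidences of generators (distinct edges give distinct polarized generators — which is automatic here since distinct edges $(x_i,x_j)$ have distinct tail-head pairs), and once, more essentially, in the $w_j \ge 2$ case above to ensure $x_{j,2}$ is not shared. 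Finally, I would remark that paths and cycles with the natural orientation trivially satisfy ``at most one edge into each vertex,'' so \Cref{trivial} specializes to the setting of \cite{ZW}; this connects the corollary to the motivation described in the introduction.
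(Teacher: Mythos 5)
Your proposal is correct and follows essentially the same route as the paper: verify the singleton-edge hypothesis of \Cref{Prop 4.1-LMc} for the polarized ideal by a case analysis (using a second polarized head-variable when $w_j\geq 2$, and the leaf variable of $x_i$ or $x_j$ otherwise), then identify $|V(H)|=|E(\D)|$ and $|X(H)|=\sum_i w_i$. Your explicit bookkeeping of which polarized variables can be shared, and the observation that the no-isolated-vertices assumption guarantees every variable appears, are just slightly more detailed versions of what the paper's proof does implicitly.
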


\begin{proof}
Let $I(\D)$ be the edge ideal of the weighted oriented graph $\D$ and let $m_1, \dots, m_r$ be the minimal generators of its polarization $I(\D)^{\text{pol}}$.  For all $1 \leq s \leq r$ let $m_s=x_ix_j\prod_{\ell = 1}^{w_j-1}y_{j, \ell}.$ If $x_j$ is a leaf then $m_s$ is the only minimal generator of $I(\D)^{\text{pol}}$ which is divisible by $x_j$ and therefore $\{s\} \in E(H(\D))$ with label $\{  x_j, y_{j,\ell} \mid 1\leq \ell \leq w_j-1 \}.$ 

On the other hand, if $x_j$ is not a leaf, then by assumption since $x_j$ is not a source, either $w_j \geq 2$ or $x_i$ is a leaf.  If $x_i$ is a leaf then as above $m_s$ is the only minimal generator of $I(D)^{\text{pol}}$ which is divisible by $x_i$ and $\{s\}$ is in $E(H(\D)).$  If $w_j \geq 2$ then $m_s = x_ix_j\prod_{\ell = 1}^{w_j-1}y_{j, \ell}$. In particular $m_s$ is divisible by $y_{j,1}.$  The assumption that there is at most one edge oriented into the vertex $x_j$ means that $y_{j,1}$ does not divide any other generator of $I(\D)^{\text{pol}}$.  Thus in this case, $\{s\} \in E(H(\D))$ with label $\{  y_{j,\ell} \mid 1\leq \ell \leq w_j-1 \}.$ 

Since $\{s\} \in E(H(\D))$ for all $1\leq s \leq r$, by \Cref{Prop 4.1-LMc}
\[
\reg(\D) = |X(H(\D))| - |V(H(\D))| = \sum_{i = 1}^n w_i - |E(\D)|
\]
and 
\[
\pd(\D) = |V(H(\D))| = |E(\D)|.
\]
\end{proof}

As a direct consequence of \Cref{trivial} one can immediately obtain the regularity and projective dimension of a large class of weighted oriented graphs such as naturally oriented paths, naturally oriented cycles, rooted forests, unicyclic graphs with a naturally oriented unique cycle, etc. Thus we recover the results in  \cite{ZW}.

\begin{example}
Let $\P$ be the naturally oriented path on the vertices  $\{x_1, x_2,x_3,x_4\}$ with the edge ideal $I(\P) =(x_1x_2^3,x_2x_3^4,x_3x_4^2).$ Since $\P$ is a path with non-trivial weights at each non-source vertex we have $\reg (\P) =3+4+2-4+2=7\text{ and } \pd (\P)=3$ by \Cref{trivial}. However, if there exists at least one non-source vertex with trivial weight we can not apply the formula given in \Cref{trivial}. For instance, suppose instead that vertex $x_3$ in $\P$ has trivial weight so that $I(\P)=(x_1x_2^3,x_2x_3,x_3x_4^2).$  Computing via Macaulay2 \cite{M2} shows that $\reg(\P)=3$ whereas applying the formula in \Cref{trivial} would give a result of $4$.
\end{example}

In light of the above example, one needs to consider the existence of trivial weights in order to be able to provide a general formula for the regularity and projective dimension of any weighted oriented path and cycle. As an immediate result of applying \Cref{cons:labeled} to weighted oriented paths and cycles, one can see that their associated labeled hypergraphs preserve the path and cycle structures. Following the terminology introduced in \cite{LMa1}, we see that the labeled hypergraph of a weighted oriented path is a string hypergraph and the labeled hypergraph of a weighted oriented cycle is a cycle hypergraph. Since formulas for the projective dimension of string and cycle hypergraphs are given in  \cite{LMa1}, we focus on expressing their results in terms of  weighted oriented graphs and conclude this section by giving a complete picture of the projective dimension of weighted naturally oriented paths and cycles.

The projective dimension formulas  given in \cite{LMa1} use the notion of modularity for string and cycle hypergraphs.  For convenience we translate their definition of modularity into the language of weighted naturally oriented paths and cycles.

\begin{definition}\label{def:modularity}
Let $2 \leq q_1< \ldots <q_k \leq n$ be the positions of non-trivial weights of a weighted naturally oriented path $\P$ on $n$ vertices where $k \geq 1.$  If $q_1 \neq 2$ and/or $q_k \neq n$ then extend the sequence $(q_1, \dots, q_k)$ by appending 2 to its beginning and/or $n$ to its end.  Call this new sequence $p_0, \dots, p_{k+1},$ where this sequence is just equal to the original sequence $q_1,\dots,q_k$ if $q_1=2$ and $q_k=n$.  For each $ i \in \{0, \ldots, k-2\}$ let $\displaystyle \Gamma_{i,\ell}=\Gamma(p_i, \ldots, p_{i+\ell})$ denote the weighted naturally oriented induced path of $\P$ on the vertices $ x_{p_i-1}, \ldots, x_{p_{i+\ell}}$  where $\ell \geq 2$.  

Let $\Gamma$ be the collection of special induced paths defined by
{\small \begin{eqnarray*} \Gamma=\Big\{\Gamma_{i,\ell} ~|~ 0\leq i\leq k-2,\, p_{i+1} -p_i \equiv p_{i+\ell} - p_{i+\ell-1}  \equiv 2 \pmod 3,\, p_{j+1} -p_{j} \equiv 0 \pmod 3  \text{ for } i < j < i+\ell-1 \Big\}.
\end{eqnarray*}}
 Note that for each $i\in\{0,\dots,k-2\}$ there is at most one $\ell\geq 2$ such that $\Gamma_{i,\ell}\in\Gamma$. 

 Thus we may simplify notation and drop the second index $\ell$, $\Gamma_i=\Gamma_{i,\ell}\in\Gamma.$

The \emph{modularity} of $\P$ is the maximal number of induced paths in $\Gamma$ that overlap in at most one edge. Symbolically, we have
    \[
        M(\P)= \max_{i_1<i_2 \cdots < i_t} \Big\{  |\{\Gamma_{i_1}, \ldots, \Gamma_{i_t}\}| ~:~    | E(\Gamma_{i_j}) \cap   E(\Gamma_{i_{j+1}}) | \leq 1 \text{ for }  1 \leq j <t, \Gamma_i \in \Gamma \Big\}.
    \]
It follows from the definition that $M(\P)= 0$ if $\Gamma= \emptyset.$
\end{definition}

\begin{example}

Let $\P_1$ be a weighted naturally oriented path on the vertices $x_1,\ldots, x_9$ with non-trivial weights at vertices $x_4$ and $x_7,$ i.e. $p_1=4$ and $p_2=7.$ By following the convention of \Cref{def:modularity}, we set $p_0=2$ and $p_3=9.$ The only induced path of $\P_1$ satisfying the conditions given in the definition of $\Gamma$ is $\Gamma (2,4,7,9).$ Thus the modularity of $\P_1$ is 1.

Let $\P_2$ be a weighted naturally oriented path on the vertices $x_1,\ldots, x_9$ with non-trivial at vertices $x_3,x_4, x_6,$ i.e. $p_1=3, p_2=4,p_3=6.$ By setting $p_0=2$ and $p_4=9,$ one can see that there exists no induced path of $\P_2$ satisfying the conditions in the definition of $\Gamma.$ Thus $M(\P_2)=0.$

Let $\P_3$ be a weighted naturally oriented path on the vertices $x_1,\ldots, x_{14}$ with non-trivial weights at vertices $x_3, x_5,x_7,x_9,x_{12},$ i.e. $p_1=3, p_2=5, p_3=7, p_4=9, p_5=12.$ By setting $p_0=2$ and $p_6=14,$ one can see that there are two induced paths of $\P_3$ satisfying conditions in the definition of $\Gamma$ and sharing $\{x_6,x_7\}$ as the common edge: $\Gamma(3,5,7)$ and $\Gamma(7,9,12,14).$ Thus $M(\P_3)=2.$

\end{example}

The next result is obtained by rephrasing the statement of \cite[Theorem 3.4] {LMa1} for weighted naturally oriented paths.

\begin{theorem}\label{pdPathLMa}
Let $\mathcal{P}$ be a weighted naturally oriented path on $n$ vertices and $2\leq p_{1}<p_{2}<\cdots<p_{k}\leq n$
be the positions of non-trivial weights in $\mathcal{P}$ where $k\geq 1.$
Then 
\[
\pd(\mathcal{P})=n-1-\sum_{i=0}^{k}\left\lfloor \frac{p_{i+1}-p_{i}+1}{3}\right\rfloor +M(\P).
\]

\end{theorem}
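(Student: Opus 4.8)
The plan is to deduce this from \Cref{pdPathLMa}'s source, \cite[Theorem 3.4]{LMa1}, by verifying that the labeled hypergraph $H(\mathcal{P})$ of a weighted naturally oriented path $\mathcal{P}$ is precisely a \emph{string hypergraph} in the sense of \cite{LMa1}, and that the combinatorial data appearing in that theorem (the number of vertices, the positions of "non-trivial" structure, and the modularity) translate exactly into $n-1$, the positions $p_i$, and $M(\mathcal{P})$ as defined in \Cref{def:modularity}. First I would apply polarization: by \Cref{pol}, $\pd(\mathcal{P})=\pd(R^{\pol}/I(\mathcal{P})^{\pol})$, so it suffices to work with the squarefree ideal $I(\mathcal{P})^{\pol}$ and its labeled hypergraph $H(\mathcal{P})=H(I(\mathcal{P})^{\pol})$ via \Cref{cons:labeled}.

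Next I would explicitly describe $H(\mathcal{P})$. Writing the edges of $\mathcal{P}$ as $(x_i,x_{i+1})$ for $i=1,\dots,n-1$, the generator attached to edge $i$ is $m_i = x_i x_{i+1}^{w_{i+1}}$, so $I(\mathcal{P})^{\pol}$ has $n-1$ generators $m_i^{\pol}=x_i x_{i+1}\prod_{\ell=1}^{w_{i+1}-1}y_{i+1,\ell}$. Hence $V(H)=\{1,\dots,n-1\}$, consecutive vertices $i$ and $i+1$ share the variable $x_{i+1}$, and each "polarization variable" $y_{j,\ell}$ (present exactly when $w_j\geq 2$) lies only in $m_{j-1}^{\pol}$, i.e. contributes a loop-edge $\{j-1\}$. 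This shows $H(\mathcal{P})$ is a string hypergraph whose "strand" has $n-1$ vertices, and that a vertex $j-1$ carries a loop precisely when $x_j$ has non-trivial weight — matching the role of the positions $p_1<\dots<p_k$. The structural bookkeeping of \Cref{def:modularity} (appending $2$ and/or $n$, the residues mod $3$, the induced paths $\Gamma_{i,\ell}$, overlap in $\le 1$ edge) is designed to mirror exactly the modularity notion of \cite[Theorem 3.4]{LMa1}; I would check each clause of the translation, in particular that the $\lfloor (p_{i+1}-p_i+1)/3\rfloor$ terms correspond to the contributions of the "free segments" between loops (consistent with \Cref{rem:pc}, $\reg(P_m)=\lfloor (m+1)/3\rfloor$, reflecting that these segments behave like paths with trivial weights), and that the boundary conventions $p_0=2$, $p_{k+1}=n$ account correctly for the two ends of the path.

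The main obstacle I expect is purely notational/translational: \cite{LMa1} states its formula in the intrinsic language of string hypergraphs (number of vertices, positions and multiplicities of repeated labels, and their own modularity function), and I must produce a clean dictionary showing that plugging in the data of $H(\mathcal{P})$ yields exactly $n-1-\sum_{i=0}^k\lfloor (p_{i+1}-p_i+1)/3\rfloor + M(\mathcal{P})$ — with particular care that the off-by-one conventions (vertex $j$ versus edge $j-1$, the shift by one in the $y_{j,\ell}$ labels, and the $+1$ inside the floor) line up and that the modularity $M(\mathcal{P})$ of \Cref{def:modularity} genuinely equals the modularity invoked in \cite[Theorem 3.4]{LMa1}. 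Once that dictionary is in place, the identity is immediate. I would conclude by invoking \cite[Theorem 3.4]{LMa1} with the translated data.
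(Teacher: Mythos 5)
Your proposal is correct and follows essentially the same route as the paper: the paper's proof likewise observes that the labeled hypergraph of a weighted naturally oriented path is a string hypergraph on $n-1$ vertices and obtains the formula by translating each expression of \cite[Theorem 3.4]{LMa1}, exactly the polarization-plus-dictionary argument you outline. Your explicit description of the generators $x_ix_{i+1}\prod_{\ell}y_{i+1,\ell}$ and of where the loop-type edges occur is a more detailed version of the same translation.
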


\begin{proof}
Since the associated labeled hypergraph of a weighted oriented path is a string hypergraph on $n-1$ vertices, the statement holds by translating each expression appearing in \cite[Theorem 3.4] {LMa1} to the language of weighted oriented paths.

\end{proof}

In general, the modularity of the weighted naturally oriented cycle is defined similarly to that of the path,in that we are counting the ways we can cover the cycle with induced paths that satisfy certain conditions. We do not fully translate the results from \cite{LMa1} because it is more technical, and we encourage the interested reader to refer to their paper for details. However, it should be noted that there are two main differences between the modularity of the cycle and that of the path. The positions of the non-trivial weights in $\C_n$ can start at any vertex with a non-trivial weight. By reordering the vertices of the cycle, we may assume that $1\leq p_1< \ldots <p_k \leq n$ where $p_1, \ldots, p_k$ are the positions of non-trivial weights in $\C_n.$ Secondly, when we define the cycle analogue to $\Gamma$ in \Cref{def:modularity}, it is possible that one of its elements has overlapping initial and terminal edges. In this case, the modularity would be equal to 1.

Now, we are ready to rephrase the projective dimension formula of cycle hypergraphs from \cite[Theorem 4.3] {LMa1} in terms of weighted naturally oriented cycles.

\begin{theorem} \label{pdCycleLinMa} 
Let $\mathcal{C}_{n}$
be a weighted naturally oriented cycle on $n$ vertices and $1\leq p_{1}<p_{2}<\cdots<p_{k}\leq n$
be the positions of non-trivial weights in $\mathcal{C}_{n}$ where
$k\geq 1$. Then 
\[
\pd(\mathcal{C}_n)=n-\left(\sum_{i=1}^{k-1}\left\lfloor \frac{p_{i+1}-p_{i}+1}{3}\right\rfloor +\left\lfloor \frac{n+p_{1}-p_{k}}{3}\right\rfloor \right)+M(\C_n)
\]
\end{theorem}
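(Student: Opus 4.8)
The plan is to follow exactly the same strategy used for \Cref{pdPathLMa}: reduce the statement to a known result from \cite{LMa1} by identifying the relevant labeled hypergraph. First I would observe that if $\C_n$ is a weighted naturally oriented cycle on $n$ vertices, then polarizing $I(\C_n)$ and applying \Cref{cons:labeled} produces a labeled hypergraph $H(\C_n)$ whose underlying hypergraph is a cycle on $n$ vertices (the cycle has $n$ edges, hence $n$ minimal generators of $I(\C_n)$, hence $|V(H(\C_n))|=n$). This is precisely a \emph{cycle hypergraph} in the sense of \cite{LMa1}. By \Cref{pol}, $\pd(\C_n)=\pd(R/I(\C_n))=\pd(R^{\pol}/I^{\pol}(\C_n))=\pd(H(\C_n))$, so it suffices to compute the projective dimension of the associated cycle hypergraph.

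The second step is the dictionary: I would carefully match the combinatorial parameters in \cite[Theorem 4.3]{LMa1} with those of the weighted oriented cycle. The number of vertices of the cycle hypergraph equals the number of edges $n$ of $\C_n$. The ``gaps'' between the labels corresponding to non-source vertices with nontrivial weight translate, under the labeled-hypergraph construction, into the differences $p_{i+1}-p_i$; a non-trivially weighted vertex $x_{p_i}$ contributes extra labels (the polarization variables $y_{p_i,\ell}$) which sit on the edge $\{s\}$ for the generator $m_s = x_{p_i-1}x_{p_i}\prod y_{p_i,\ell}$, exactly as in the proof of \Cref{trivial}. The floor terms $\lfloor (p_{i+1}-p_i+1)/3\rfloor$ and the ``wrap-around'' term $\lfloor (n+p_1-p_k)/3\rfloor$ come from the string-hypergraph contributions between consecutive special labels, with the last term accounting for the arc of the cycle from $p_k$ back to $p_1$. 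The modularity $M(\C_n)$ is, by construction, the translation of the modularity defined for cycle hypergraphs in \cite{LMa1}, as indicated in the paragraph preceding the statement.

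The main obstacle I anticipate is purely bookkeeping rather than conceptual: one must be careful that the reindexing of the cycle (choosing a starting vertex among those of non-trivial weight, so that $1\le p_1<\cdots<p_k\le n$) is compatible with the corresponding freedom in \cite{LMa1}, and that the degenerate cases are handled. Specifically, the case $k=1$ (a single nontrivially weighted vertex), and the subtle point flagged in the text that in the cyclic setting one of the special induced paths may have its initial and terminal edges overlap — in which case the modularity is capped at $1$ — both need to be checked to coincide with the conventions of \cite[Theorem 4.3]{LMa1}. Once the parameter dictionary is verified on these boundary cases, the proof is a one-line invocation.

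\begin{proof}
Since the associated labeled hypergraph of a weighted naturally oriented cycle on $n$ vertices is a cycle hypergraph on $n$ vertices, the statement follows from \Cref{pol} by translating each expression appearing in \cite[Theorem 4.3]{LMa1} into the language of weighted oriented cycles, as explained above.
\end{proof}
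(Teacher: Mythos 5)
Your proposal is correct and follows essentially the same route as the paper: pass to the polarization, recognize the associated labeled hypergraph of $\C_n$ as a cycle hypergraph on $n$ vertices, and invoke \cite[Theorem 4.3]{LMa1}, with the formula obtained by translating its parameters (and the modularity) into the language of weighted naturally oriented cycles. The extra care you flag about reindexing and the overlapping initial/terminal edge case is exactly the bookkeeping the paper leaves implicit, so there is no gap.
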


\begin{proof}
Since the associated labeled hypergraph of a weighted naturally  oriented cycle is a cycle hypergraph on $n$ vertices, statement holds by \cite[Theorem 4.3] {LMa1}.
\end{proof}

\begin{remark}
As a consequence of the theorems above, one can obtain the depth formulas for weighted naturally oriented paths and cycles using Auslander-Buchsbaum formula. Let $\P$ be a weighted naturally oriented path and $\C_n$ be a weighted naturally oriented cycle on $n$ vertices. Then
\begin{eqnarray*}
\depth (\P) &=& 1+\sum_{i=0}^{k}\left\lfloor \frac{p_{i+1}-p_{i}+1}{3}\right\rfloor -M(\P), \text{ and } \\
\depth (\C_n) &=& \sum_{i=1}^{k-1}\left\lfloor \frac{p_{i+1}-p_{i}+1}{3}\right\rfloor +\left\lfloor \frac{n+p_{1}-p_{k}}{3}\right\rfloor -M(\C_n).
\end{eqnarray*}
\end{remark}

\section{Betti Splitting}\label{splitting}

A common strategy used in the study of monomial ideals $I$ in $R$ is to decompose the monomial ideal into smaller ideals and recover the invariants of $I$ using the invariants of the smaller pieces.  Eliahou and Kervaire used this strategy in \cite{EK} when they introduced the notion of a Betti splitting of a monomial ideal.  The idea was developed further by Francisco, H\`a, and Van Tuyl in \cite{FHVT}, where the authors studied when a monomial ideal has a Betti splitting. We recall the definition of this notion along with relevant important results from \cite{FHVT}. We then provide a large class of Betti splittings of edge ideals of weighted oriented graphs to be used in later sections. 

Given a monomial ideal $I$, we denote by $\G(I)$ the set of minimal monomial generators of $I$.

\begin{definition}\label{def:betti}
Let $I, J,$ and $K$ be monomial ideals such that $\G(I)$ is the disjoint union of $\G(J)$ and $\G(K).$ Then
$I = J + K$ is a \emph{Betti splitting} if
\begin{eqnarray*}
\beta_{i,j} (R/I)= \beta_{i,j} (R/J)+\beta_{i,j} (R/K)+\beta_{i-1,j} (R/ J \cap K)
\end{eqnarray*}
for all $i,j \ge 0$ where $ \beta_{i-1,j} (R/ J\cap K) = 0$ if $i=0.$
\end{definition}

\begin{theorem}{\cite[Corollary 2.7]{FHVT}}\label{thm:betti}
Suppose that $I= J + K$ where $\G(J)$ contains
all the generators of $I$ divisible by some variable $x_i$ and $\G(K)$ is a nonempty set
containing the remaining generators of $I.$ If $J$ has a linear resolution, then $I = J+K$
is a Betti splitting.
\end{theorem}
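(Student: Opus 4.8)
The statement to be proved is Theorem~\ref{thm:betti} (\cite[Corollary 2.7]{FHVT}): if $I = J+K$ with $\G(J)$ collecting all generators of $I$ divisible by $x_i$ and $\G(K)$ containing the rest, and $J$ has a linear resolution, then $I = J+K$ is a Betti splitting. The plan is to reduce to the general criterion for Betti splittings in terms of the long exact sequence of Tor coming from the Mayer--Vietoris sequence $0 \to J\cap K \to J\oplus K \to I \to 0$, and then use the linearity of the resolution of $J$ to kill the connecting maps.

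First I would record the homological reformulation: $I = J+K$ is a Betti splitting if and only if, for all $i$ and $j$, the maps $\Tor_i(R/J\cap K, k)_j \to \Tor_i(R/J,k)_j \oplus \Tor_i(R/K,k)_j$ induced by the inclusions are injective — equivalently, the connecting homomorphisms $\Tor_i(R/I,k)_j \to \Tor_{i-1}(R/(J\cap K),k)_j$ in the long exact sequence associated to $0 \to R/(J\cap K) \to R/J \oplus R/K \to R/I \to 0$ all vanish. This is a standard consequence of Definition~\ref{def:betti} together with additivity of dimensions in a long exact sequence; I would state it as a preliminary lemma (it is \cite[Proposition 2.1]{FHVT} in spirit). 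Then the splitting becomes a statement about the vanishing of these connecting maps.

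Next comes the key degree-counting step. Since $\G(J)$ consists exactly of the generators of $I$ divisible by $x_i$, one checks that $J \cap K = x_i \cdot L$ for a suitable monomial ideal $L$ generated in the degrees of $\G(K)$-type least common multiples; more simply, every minimal generator of $J\cap K$ is a least common multiple $\lcm(f,g)$ with $f \in \G(J)$, $g\in\G(K)$, hence is divisible by $x_i$ and has degree at least $2$. Consequently $J\cap K$ is generated in degrees $\geq 2$, so $\reg(R/(J\cap K)) \geq 1$ and more importantly the minimal generators of $J\cap K$ are strictly ``deeper'' than the linear generators of $J$. Now use that $J$ has a linear resolution, say generated in degree $d$: then $\Tor_i(R/J,k)_j = 0$ unless $j = i + d - 1$, i.e. $\beta_{i,j}(R/J)$ is concentrated on the line $j - i = d-1$. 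I would then argue that the connecting map $\Tor_i(R/I,k)_j \to \Tor_{i-1}(R/(J\cap K),k)_j$ must vanish by comparing with the other piece of the long exact sequence: the portion $\Tor_i(R/J,k)_j \oplus \Tor_i(R/K,k)_j \to \Tor_i(R/I,k)_j \to \Tor_{i-1}(R/(J\cap K),k)_j \to \Tor_{i-1}(R/J,k)_j \oplus \Tor_{i-1}(R/K,k)_j$, and showing that the last map is injective in the relevant internal degrees. The decisive point is that $J\cap K \subseteq x_i K$ and a careful look (following \cite{FHVT}) shows $\Tor_{i-1}(R/(J\cap K),k)_j \to \Tor_{i-1}(R/J,k)_j$ is injective precisely because $J$'s resolution is linear while $J\cap K$ ``lives inside $K$'' after factoring out $x_i$, so no cancellation can occur on $J$'s linear strand.

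The main obstacle I anticipate is making the last injectivity claim precise: one has to track internal degrees carefully and show that the linearity hypothesis on $J$ forces the map out of $\Tor_{i-1}(R/(J\cap K),k)_j$ to be injective for every $(i,j)$, not just generically. I would handle this by the dual formulation used in \cite{FHVT}: split the inclusion $J\cap K \hookrightarrow J \oplus K$ and observe that the composite $J \cap K \hookrightarrow J$ factors through the inclusion of the degree-$\geq d+1$ part (since generators of $J\cap K$ properly contain a generator of $J$ plus the extra variable $x_i$), hence on Tor it lands where the linear resolution of $J$ contributes nothing, forcing the relevant component of the map on Tor to be injective via the $K$-component instead — and then invoke Francisco--H\`a--Van~Tuyl's criterion. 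Once the connecting maps vanish, additivity in the long exact sequence yields exactly the Betti-number identity in Definition~\ref{def:betti}, completing the proof.
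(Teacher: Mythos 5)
The paper itself offers no proof of this statement; it is quoted verbatim from \cite[Corollary 2.7]{FHVT}, so your proposal has to be judged against the argument given there. Your plan breaks at its very first step: the homological characterization of a Betti splitting is inverted. From the Mayer--Vietoris sequence $0 \to J\cap K \to J \oplus K \to I \to 0$, counting dimensions in the long exact sequence of $\Tor$ shows that the identity in \Cref{def:betti} holds for all $i,j$ exactly when the maps $\Tor_i(J\cap K, k)_j \to \Tor_i(J,k)_j \oplus \Tor_i(K,k)_j$ induced by the inclusions are \emph{zero} for all $i,j$ (equivalently, every connecting homomorphism $\Tor_i(I,k)_j \to \Tor_{i-1}(J\cap K,k)_j$ is \emph{surjective}); this is \cite[Proposition 2.1]{FHVT}. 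You instead demand that these maps be injective and that the connecting homomorphisms vanish. That is not the criterion, and it is false precisely in the situations covered by the theorem: for $I=(x,y)$, $J=(x)$, $K=(y)$ one has $J\cap K=(xy)$, and the map $\Tor_0(J\cap K,k)_2 \to \Tor_0(J,k)_2 \oplus \Tor_0(K,k)_2 = 0$ is zero with nonzero source, yet $I=J+K$ is a Betti splitting. So the injectivity you try to force in your final paragraph cannot be proved, and even if it could, it would not yield the Betti-number identity.

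The ironic part is that the ingredients you assemble do prove the correct criterion once it is stated the right way around. A linear resolution forces $J$ to be generated in a single degree $d$, so $\beta_{m,j}(J)=0$ unless $j=m+d$; every minimal generator of $J\cap K$ is an $\lcm(f,g)$ with $f\in\G(J)$, $g\in\G(K)$ and $g\nmid f$, hence a proper multiple of $f$ of degree at least $d+1$, so $\beta_{m,j}(J\cap K)=0$ for $j\le m+d$; therefore the component $\Tor_m(J\cap K,k)\to\Tor_m(J,k)$ vanishes purely for degree reasons. For the $K$-component, since no generator of $K$ is divisible by $x_i$, one has $J\cap K\subseteq (x_i)\cap K=x_iK$, and the inclusion $x_iK\hookrightarrow K$ is, up to the isomorphism $x_iK\cong K(-1)$, multiplication by $x_i$, which annihilates every $\Tor_m(-,k)$; so that component is zero with no hypothesis on $K$ at all. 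This is exactly the route taken in \cite{FHVT}. Keep your degree bookkeeping and the factorization through $x_iK$, but aim them at showing the maps on $\Tor$ are zero rather than injective.
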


When $ I = J + K$ is a Betti splitting, important homological invariants of $I$ are indeed related to the corresponding invariants of the smaller ideals $J,K.$ The following corollary is a direct consequence of the formulas for the Betti numbers.

\begin{corollary}\label{cor:betti}
Let $I = J + K $ be a Betti splitting. Then 
\begin{eqnarray*}
 \reg (R/I)&=&\max \{ \reg (R/J),\reg (R/K),\reg (R /J \cap K)-1\}\\
 \pd(R/I) &=& \max \{ \pd(R/J), \pd (R/K), \pd (R/ J \cap K)+1\}.
\end{eqnarray*}

\end{corollary}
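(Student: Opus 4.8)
The plan is to derive \Cref{cor:betti} directly from the defining equation of a Betti splitting in \Cref{def:betti}, translating the displayed formula for graded Betti numbers into the corresponding statements about regularity and projective dimension. Recall that $\reg(R/I) = \max\{j - i : \beta_{i,j}(R/I) \neq 0\}$ and $\pd(R/I) = \max\{i : \beta_{i,j}(R/I) \neq 0\}$, so both invariants are determined by the set of pairs $(i,j)$ with $\beta_{i,j}(R/I) \neq 0$.

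First I would fix the Betti splitting $I = J + K$, so that for all $i, j \geq 0$ we have $\beta_{i,j}(R/I) = \beta_{i,j}(R/J) + \beta_{i,j}(R/K) + \beta_{i-1,j}(R/(J \cap K))$, with the convention $\beta_{-1,j} = 0$. Since all three terms on the right are nonnegative, $\beta_{i,j}(R/I) \neq 0$ if and only if at least one of $\beta_{i,j}(R/J)$, $\beta_{i,j}(R/K)$, $\beta_{i-1,j}(R/(J \cap K))$ is nonzero; there is no cancellation to worry about. For the regularity statement, I would compute $\max\{j - i : \beta_{i,j}(R/I) \neq 0\}$ by splitting the maximum over the three cases. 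The contribution from the $\beta_{i,j}(R/J)$ terms is exactly $\reg(R/J)$, similarly for $K$, and the contribution from the $\beta_{i-1,j}(R/(J\cap K))$ terms is $\max\{j - i : \beta_{i-1,j}(R/(J\cap K)) \neq 0\} = \max\{j - (i'+1) : \beta_{i',j}(R/(J\cap K)) \neq 0\} = \reg(R/(J\cap K)) - 1$ after reindexing $i' = i-1$. Taking the overall maximum gives $\reg(R/I) = \max\{\reg(R/J), \reg(R/K), \reg(R/(J\cap K)) - 1\}$. The projective dimension statement is entirely analogous: the $J$ and $K$ terms contribute $\pd(R/J)$ and $\pd(R/K)$ respectively, while the shifted term contributes $\max\{i : \beta_{i-1,j}(R/(J\cap K)) \neq 0\} = \pd(R/(J\cap K)) + 1$, and the maximum of these three is $\pd(R/I)$.

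There is essentially no hard step here; this is a bookkeeping argument. The one point requiring a small amount of care is the reindexing of the homological degree in the $J \cap K$ term — one must make sure the shift by $1$ lands on the correct side in each formula (subtracting $1$ for regularity since $j-i$ decreases when $i$ increases, and adding $1$ for projective dimension). I would also note the edge case: if $J \cap K = 0$ or more precisely if $R/(J\cap K)$ is free (has projective dimension $0$ and regularity $0$), the corresponding terms still fit the formula under the stated conventions, so no separate treatment is needed, though in the applications in \Cref{paths} and \Cref{cycles} the ideal $J \cap K$ will always be a genuine nonzero proper ideal. I expect the proof to be only a few lines, citing \Cref{def:betti} and unwinding the definitions of $\reg$ and $\pd$.
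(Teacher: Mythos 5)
Your argument is correct and is exactly what the paper intends: the paper gives no separate proof, stating only that the corollary is a direct consequence of the Betti number formula in \Cref{def:betti}, and your bookkeeping (no cancellation since all Betti numbers are nonnegative, plus the homological shift by one in the $J\cap K$ term) is the standard way to spell that out. Nothing further is needed.
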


One can generalize the notion of splitting edge of a graph from \cite{FHVT} to splitting edge of a weighted oriented graph.  Let $e=(x_i,x_j)$ be an edge in a weighted oriented graph $\D.$ If $I(\D)=J+K$ is a Betti splitting when $J=(x_ix_j^{w_j})$ is the  monomial ideal  associated to $e$, the edge $e$ is called \emph{a splitting edge} of $\D.$ The edge $(x_i,x_j)$  in \Cref{splitvertex} is a splitting edge of $\D.$

\begin{proposition}\label{splitvertex}
Let $\D$ be a weighted oriented graph on vertex set $\{x_1 \ldots x_n\}$. Suppose that $w_j >1$ and that $(x_i, x_j)$ is the only edge of $\D$ oriented into the vertex $x_j$.  Let $J = (x_ix_j^{w_j})$ and let $K$ be the ideal generated by $\G(I(\D)) \setminus \{x_ix_j^{w_j}\}$.  Then $I(\D) = J+K$ is a Betti splitting and further
\begin{eqnarray*}
\reg (\D)&=&\max \left\{w_j, \reg (R/K), \reg (R/(J\cap K))-1\right\},\\
\pd (\D)&=&\max \left\{\pd (R/K), \pd (R/(J\cap K))+1\right\}.
\end{eqnarray*}

\end{proposition}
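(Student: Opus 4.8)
The plan is to verify the hypotheses of Theorem~\ref{thm:betti} with $x_j$ playing the role of the distinguished variable, and then feed the resulting Betti splitting into Corollary~\ref{cor:betti}. First I would observe that since $(x_i,x_j)$ is the only edge of $\D$ oriented into $x_j$ and source vertices are assumed to have weight one (so $x_j$, having non-trivial weight $w_j>1$, is not a source), the variable $x_j$ divides a minimal generator $x_\ell x_j^{w_j}$ of $I(\D)$ only when that edge is $(x_i,x_j)$; moreover $x_j$ cannot appear in a generator coming from an edge $(x_j,x_t)$ oriented \emph{out} of $x_j$ because such a generator is $x_j x_t^{w_t}$ and its $x_j$-exponent is $1<w_j$, but more to the point we only need that the set of generators divisible by $x_j$ is precisely $\{x_i x_j^{w_j}\}$ together with, possibly, generators $x_j x_t^{w_t}$. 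Here I need to be slightly careful: if $x_j$ has out-edges then $x_j$ divides those generators too, so $\G(J)$ as defined ($J=(x_ix_j^{w_j})$) would not contain \emph{all} generators divisible by $x_j$. I would instead invoke Theorem~\ref{thm:betti} using the variable $x_{j,w_j}$ of the polarization, or argue directly; the cleanest route is: $J=(x_ix_j^{w_j})$ is a principal ideal, hence has a linear resolution, and I would check that $\G(J)$ consists exactly of the generators of $I(\D)$ divisible by $x_j^{w_j}$ — no other generator of the edge ideal has $x_j$ to a power $\geq w_j\geq 2$, since any other generator involving $x_j$ has it to the first power (as a source of its edge). Thus working with the polarized ideal and the last polarization variable $x_{j,w_j}$, or equivalently applying the refined form of Theorem~\ref{thm:betti}, gives that $I(\D)=J+K$ is a Betti splitting.

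Once the Betti splitting is established, Corollary~\ref{cor:betti} immediately yields
\[
\reg(\D)=\max\{\reg(R/J),\reg(R/K),\reg(R/(J\cap K))-1\},\qquad
\pd(\D)=\max\{\pd(R/J),\pd(R/K),\pd(R/(J\cap K))+1\}.
\]
It then remains to compute $\reg(R/J)$ and $\pd(R/J)$ for the principal ideal $J=(x_ix_j^{w_j})$. Since $J$ is generated by a single monomial of degree $w_j+1$, its minimal free resolution is $0\to R(-(w_j+1))\to R\to R/J\to 0$, so $\pd(R/J)=1$ and $\reg(R/J)=(w_j+1)-1=w_j$. Substituting $\pd(R/J)=1$ into the projective dimension formula, the term $\pd(R/J)=1$ is dominated by $\pd(R/(J\cap K))+1$ (note $J\cap K$ is a nonzero ideal since $K$ is a nonempty set of monomials each sharing the variable $x_j$ or $x_i$ with $x_ix_j^{w_j}$, so $J\cap K\neq 0$ and $\pd(R/(J\cap K))\geq 1$), hence it drops out, giving the stated $\pd$ formula. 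Substituting $\reg(R/J)=w_j$ gives the stated $\reg$ formula directly.

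The only genuine subtlety — and the step I would treat most carefully — is the verification that $\G(J)=\{x_ix_j^{w_j}\}$ really does capture the correct set of generators so that Theorem~\ref{thm:betti} applies. The honest statement to check is that \emph{in the polarization} $I(\D)^{\pol}$, the variable $x_{j,w_j}$ divides only the polarized generator $(x_ix_j^{w_j})^{\pol}$, which is immediate because the $w_j$-th polarization variable of $x_j$ only appears in generators where $x_j$ occurs to the power at least $w_j$, and by the hypothesis that $(x_i,x_j)$ is the unique edge into $x_j$ together with the source-weight-one convention, $x_ix_j^{w_j}$ is the unique such generator. Since $(x_ix_j^{w_j})^{\pol}$ is squarefree of the same degree it still has a linear resolution, so Theorem~\ref{thm:betti} gives that $I(\D)^{\pol}=J^{\pol}+K^{\pol}$ is a Betti splitting, and by Lemma~\ref{pol} (equality of Betti numbers under polarization, together with $(J+K)^{\pol}=J^{\pol}+K^{\pol}$ and $(J\cap K)^{\pol}=J^{\pol}\cap K^{\pol}$ for monomial ideals, the latter being a standard fact I would cite or note) this transfers back to show $I(\D)=J+K$ is a Betti splitting. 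Everything else is the routine bookkeeping of Corollary~\ref{cor:betti} with the trivial resolution of a principal ideal plugged in.
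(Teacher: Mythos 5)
Your proposal is correct and follows essentially the same route as the paper: both pass to the polarization, note that a polarization variable of $x_j$ (the paper uses the second copy $y_{j,1}$, you use the $w_j$-th copy, which makes no difference) divides only the polarized generator of $x_ix_j^{w_j}$, apply Theorem~\ref{thm:betti} there, and transfer the Betti splitting back via Lemma~\ref{pol} together with $(J\cap K)^{\pol}=J^{\pol}\cap K^{\pol}$, before finishing with Corollary~\ref{cor:betti} and $\reg(R/J)=w_j$, $\pd(R/J)=1$. Your extra care in noting that the naive choice of $x_j$ as the splitting variable fails because of out-edges is exactly the point the polarization step is designed to handle.
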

\begin{proof}
Let $I(\D)$ be the edge ideal of $\D$ and let $I(\mathcal{D})^{\text{pol }}$ be its polarization.  The polarization of the generator $x_ix_j^{w_j}$ of $I(\D)$ is $m = x_ix_j \prod_{\ell = 1}^{w_j-1}y_{j,\ell}$.  Since $w_j \geq 2$, $m$ is divisible by $y_{j,1}$.  In particular, since $(x_i, x_j)$ is the only edge which is oriented into $x_j$, $m$ is the only minimal generator of $I(\mathcal{D})^{\text{pol}}$ which is divisible by $y_{j,1}$.  Thus by \Cref{thm:betti}, $I(\D)^{\text{pol}} = J' + K'$ is a Betti splitting where $J' = (m)$ and $K'$ is the ideal generated by $\G(I(\D)^{\text{pol}}) \setminus \{m\}$.  Note that $J' = J^{\text{pol}}$ and $K' = K^{\text{pol}}$ where $J = (x_ix_j^{w_j})$ and $K$ is the ideal generated by $\G(I(\D)) \setminus \{x_ix_j^{w_j}\}$. Furthermore, $(J \cap K )^{\pol} = J' \cap K'$ as both $J$ and $K$ are monomial ideals.  Since polarization preserves Betti numbers by \Cref{pol}, this implies that $I = J+K$ is a Betti splitting. 

The formulas for regularity and projective dimension are a direct application of \Cref{cor:betti} since $\reg(R/J) = w_j$ and $\pd(R/J) = 1.$
\end{proof}

\begin{corollary}\label{cor:bettieq}
Let $\D$ be a weighted naturally oriented path or cycle on vertex set $\left\{x_1 \ldots x_n\right\}$.  Let $x_i$ be a vertex in $\D$ with $w_i>1$. Let $J=(x_{i-1}x_{i}^{w_i})$ and $K$ be the ideal generated by $\G(I(\D)) \setminus \{x_{i-1}x_{i}^{w_i}\}.$  Then $I(\D) = J+K$ is a Betti splitting and 
\[
\reg (\D)=\max \left\{w_i, \reg (R/K), \reg (R/(J\cap K))-1\right\}.
\]
\end{corollary}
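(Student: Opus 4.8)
The plan is to reduce Corollary~\ref{cor:bettieq} to Proposition~\ref{splitvertex}, which already supplies the Betti splitting and the regularity formula under the hypothesis that $(x_i,x_j)$ is the \emph{only} edge oriented into $x_j$. So the entire task is to check that hypothesis in the restricted setting of a weighted naturally oriented path or cycle and to identify the edge in question with the generator $x_{i-1}x_i^{w_i}$.

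First I would set up notation: for a weighted naturally oriented path $\P$ on $\{x_1,\dots,x_n\}$ the edges are exactly $(x_1,x_2),(x_2,x_3),\dots,(x_{n-1},x_n)$, and for a weighted naturally oriented cycle $\C_n$ one adds the edge $(x_n,x_1)$ (with indices read cyclically). In either case, for a fixed vertex $x_i$ the only edge oriented \emph{into} $x_i$ is $(x_{i-1},x_i)$ (with $x_0$ interpreted as $x_n$ in the cycle case), so the generator of $I(\D)$ coming from the unique in-edge at $x_i$ is precisely $x_{i-1}x_i^{w_i}$. Since we are given $w_i>1$, the hypotheses of Proposition~\ref{splitvertex} are satisfied with $j=i$ and the predecessor vertex playing the role of $x_i$ in that proposition. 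I should also note the edge-case $i=1$ on a path: then $x_1$ is a source, which by the paper's standing convention has weight $1$, contradicting $w_1>1$; hence on a path $i\geq 2$ automatically, and on a cycle every vertex has a well-defined predecessor, so the statement is vacuous or consistent in the degenerate cases.

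Given this identification, Proposition~\ref{splitvertex} immediately yields that $I(\D)=J+K$ with $J=(x_{i-1}x_i^{w_i})$ and $K$ generated by $\G(I(\D))\setminus\{x_{i-1}x_i^{w_i}\}$ is a Betti splitting, and that
\[
\reg(\D)=\max\left\{w_i,\ \reg(R/K),\ \reg(R/(J\cap K))-1\right\},
\]
which is exactly the claimed formula. (The projective dimension formula from Proposition~\ref{splitvertex} also holds but is not restated in the corollary.)

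The only genuinely ``new'' content here is the combinatorial observation that a naturally oriented path or cycle has at most one in-edge at every vertex, and this is immediate from the definition of the natural orientation. Consequently I do not anticipate any real obstacle; the proof is a two-line invocation of Proposition~\ref{splitvertex} after spelling out which edge is being split. If there is any subtlety to flag, it is purely bookkeeping with the cyclic indexing convention (ensuring $x_{i-1}$ means $x_n$ when $i=1$ in the cycle case) and confirming via the source-weight-one convention that $i\geq 2$ on a path, neither of which requires computation.
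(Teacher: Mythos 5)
Your proposal is correct and is essentially the paper's own proof: both reduce the corollary to \Cref{splitvertex} by observing that in a weighted naturally oriented path or cycle the edge $(x_{i-1},x_i)$ is the unique edge oriented into $x_i$, with $w_i>1$ given. The extra remarks on cyclic indexing and the source-weight-one convention are harmless bookkeeping that the paper leaves implicit.
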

\begin{proof}
This follows immediately from \Cref{splitvertex} since  $(x_{i-1},x_i)$ is the only edge  oriented into the vertex $x_i$ in a weighted naturally oriented path or cycle.
\end{proof}

\begin{remark}\label{JL}
If $J\cap K=JL$ for some ideal $L$ such that $J$ and $L$ have different supports, then, by \Cref{lem:disjoint}, $\reg(R/J\cap K)-1=\reg(R/J)+\reg(R/L).$ 
\end{remark}

\section{Weighted Oriented Paths}\label{paths}

In this section, we focus on computing the regularity of weighted naturally oriented  paths.  If the weight of each vertex is trivial, the edge ideal of a weighted naturally oriented path is the same as the edge ideal of an unweighted path and its regularity is given in \Cref{rem:pc}.  At the other extreme, if the weight of each vertex is non-trivial, the regularity and projective dimension of a path $\P$ can be computed explicitly via the labeled hypergraph of $\P$ as observed in \Cref{trivial}. Our main result in this section is \Cref{path:gen} which gives a formula for the regularity of a  weighted naturally oriented path with any combination of trivial and non-trivial weights.  This formula depends both on the weights of the vertices and the distances between successive non-trivial weights.  

Before proving our main result we introduce two lemmas which give the regularity of naturally oriented paths with special arrangements of non-trivial weights. 

\begin{lemma}\label{prop:path1}
Let $\P$ denote the weighted naturally oriented path on $n$ vertices $x_1,\ldots, x_n$ such that $w_p=w \ge 2$ for some $p \in \left\{2,\ldots, n\right\}$ and $w_i=1$ for $i\neq p,$ $1 \le i \le n.$ Then 
    \[
        \reg (\P)= w+\BL{\frac{n-p}{3}\BR}+\BL{\frac{p+1}{3}\BR}-1.
    \]
\end{lemma}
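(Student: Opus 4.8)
The plan is to use the Betti splitting machinery from Section \ref{splitting}, specifically \Cref{cor:bettieq}, to peel off the unique edge oriented into the non-trivially weighted vertex $x_p$, namely $e = (x_{p-1}, x_p)$ whose polarized generator involves $x_p^{w}$. First I would set $J = (x_{p-1}x_p^{w})$ and let $K$ be generated by the remaining generators of $I(\P)$; these are exactly the generators coming from the edges of the two (unweighted, since all other weights are trivial) sub-paths $x_1, \dots, x_{p-1}$ and $x_p, \dots, x_n$. By \Cref{cor:bettieq}, $I(\P) = J+K$ is a Betti splitting and
\[
\reg(\P) = \max\left\{ w,\ \reg(R/K),\ \reg(R/(J\cap K)) - 1\right\}.
\]
Since $K$ is (up to the irrelevant extra variable issue in \Cref{rem:Ix}) the sum of edge ideals of the disjoint paths $P_{p-1}$ on $\{x_1,\dots,x_{p-1}\}$ and $P_{n-p+1}$ on $\{x_p,\dots,x_n\}$, \Cref{lem:disjoint}(1) together with \Cref{rem:pc} gives $\reg(R/K) = \BL\frac{p}{3}\BR + \BL\frac{n-p+2}{3}\BR$.

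The heart of the argument is computing $\reg(R/(J\cap K))$. The key structural observation is that $J = (x_{p-1}x_p^{w})$ and the generator $x_{p-1}x_{p-2}$ of $K$ (the edge $(x_{p-2}, x_{p-1})$, present as long as $p \geq 3$), and the generator $x_p x_{p+1}$ of $K$ (the edge $(x_p, x_{p+1})$, present as long as $p \leq n-1$) interact with $J$ through the variables $x_{p-1}$ and $x_p$. I would compute $J \cap K$ explicitly: a generator $m$ of $K$ contributes $\mathrm{lcm}(x_{p-1}x_p^w, m)$, and after removing non-minimal elements one sees that only the edges incident to $x_{p-1}$ or $x_p$ give "new" behavior, while all other edges $m$ of $K$ have $\mathrm{lcm}(x_{p-1}x_p^w, m) = x_{p-1}x_p^w \cdot m$. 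The upshot I expect is that $J \cap K = x_p^{w-1} \cdot (J' \cap K')$-type structure, or more precisely that $J\cap K$ factors, up to disjoint supports, as $J$ times the edge ideal of a path obtained by deleting $x_{p-1}$ and $x_p$ and re-attaching — concretely, $J \cap K = (x_{p-1}x_p^{w}) \cdot L$ where $L$ is, up to isolated variables, the edge ideal of the path on $x_1, \dots, x_{p-2}$ plus the path on $x_{p+1}, \dots, x_n$ (the vertices $x_{p-1}$ and $x_p$ having been absorbed into $J$). Then \Cref{JL} (or \Cref{lem:disjoint}(2) directly) yields
\[
\reg(R/(J\cap K)) - 1 = \reg(R/J) + \reg(R/L) = w + \BL\frac{p-1}{3}\BR + \BL\frac{n-p+1}{3}\BR.
\]

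Granting these two computations, the lemma follows by evaluating the maximum. One checks that $\reg(R/(J\cap K)) - 1 = w - 1 + \BL\frac{p-1}{3}\BR + \BL\frac{n-p+1}{3}\BR$ dominates both $w$ (since $w \geq 2$ forces the floor terms plus $w-1 \geq w$ only when... actually one must be slightly careful and handle small $p$, $n-p$ separately) and $\reg(R/K)$ (a comparison of floor functions: $\BL\frac{p-1}{3}\BR + \BL\frac{n-p+1}{3}\BR \geq \BL\frac{p}{3}\BR + \BL\frac{n-p+2}{3}\BR - (w-1)$, which holds since $w \geq 2$ and each floor can drop by at most the obvious amount). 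A short case analysis on $p \bmod 3$ and $(n-p)\bmod 3$ then identifies $w - 1 + \BL\frac{p-1}{3}\BR + \BL\frac{n-p+1}{3}\BR$ with the claimed $w + \BL\frac{n-p}{3}\BR + \BL\frac{p+1}{3}\BR - 1$; indeed $\BL\frac{p-1}{3}\BR = \BL\frac{p+1}{3}\BR$ unless $p \equiv 1 \pmod 3$, and similarly for the other term, and the edge cases where a sub-path is empty (i.e. $p=2$ or $p = n$) are checked directly.

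The main obstacle I anticipate is the explicit description of $J \cap K$ and verifying it has the claimed product form $J \cdot L$ with disjoint (or almost disjoint) supports — the bookkeeping of which $\mathrm{lcm}$'s survive minimalization near the vertices $x_{p-1}, x_p$ is where an error could creep in, and the degenerate cases $p = 2$, $p = 3$, $p = n-1$, $p = n$ (where one or both neighboring edges of $x_{p-1}, x_p$ are absent) need to be handled as separate base cases or folded in via the conventions of \Cref{rem:Ix}. Once the product structure of $J\cap K$ is pinned down, the rest is an application of \Cref{lem:disjoint}, \Cref{rem:pc}, \Cref{cor:bettieq}, and a routine floor-function comparison.
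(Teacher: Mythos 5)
Your overall strategy is exactly the paper's: split off $J=(x_{p-1}x_p^{w})$ via \Cref{cor:bettieq}, compute $\reg(R/K)=\BL\frac{p}{3}\BR+\BL\frac{n-p+2}{3}\BR$ from two trivially weighted paths, and handle $J\cap K=JL$ via \Cref{JL} and \Cref{rem:Ix}. The gap is in the step you yourself flagged as risky: the description of $L$, and hence the value of $\reg(R/(J\cap K))$. Since $J$ is principal, $J\cap K=x_{p-1}x_p^{w}\cdot(K:x_{p-1}x_p^{w})$, and the edges of $K$ adjacent to the split edge contribute $\lcm(x_{p-1}x_p^{w},x_{p-2}x_{p-1})=J\cdot x_{p-2}$ and $\lcm(x_{p-1}x_p^{w},x_px_{p+1})=J\cdot x_{p+1}$. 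These pure-variable factors then absorb the contributions of the next edges $x_{p-3}x_{p-2}$ and $x_{p+1}x_{p+2}$, so after minimalization
\[
L=(x_1x_2,\ldots,x_{p-4}x_{p-3},\;x_{p-2},\;x_{p+1},\;x_{p+2}x_{p+3},\ldots,x_{n-1}x_n),
\]
i.e.\ the leftover path pieces live on $x_1,\ldots,x_{p-3}$ and $x_{p+2},\ldots,x_n$, not on $x_1,\ldots,x_{p-2}$ and $x_{p+1},\ldots,x_n$ as in your sketch; the variables $x_{p-2},x_{p+1}$ are harmless by \Cref{rem:Ix}. Consequently $\reg(R/L)=\BL\frac{p-2}{3}\BR+\BL\frac{n-p}{3}\BR$, not $\BL\frac{p-1}{3}\BR+\BL\frac{n-p+1}{3}\BR$.

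This is not a cosmetic discrepancy: your value $w+\BL\frac{p-1}{3}\BR+\BL\frac{n-p+1}{3}\BR$ for $\reg(R/(J\cap K))-1$ (and also the off-by-one variant $w-1+\BL\frac{p-1}{3}\BR+\BL\frac{n-p+1}{3}\BR$ you write later) exceeds the lemma's claimed regularity for, e.g., $n=6$, $p=4$, where the lemma gives $w$ but your expression gives $w+2$ (resp.\ $w+1$); since this term feeds directly into the Betti-splitting maximum, the proposed final case analysis cannot reconcile it with the statement. With the corrected $L$, everything closes cleanly: $\reg(R/(J\cap K))-1=w+\BL\frac{p-2}{3}\BR+\BL\frac{n-p}{3}\BR\geq\reg(R/K)$, and $\BL\frac{p-2}{3}\BR=\BL\frac{p+1}{3}\BR-1$ holds identically (no analysis mod $3$ and no separate treatment of $p=2,3,n-1,n$ is needed beyond the conventions already built into the displayed $L$), giving the stated formula.
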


\begin{proof}
By \Cref{cor:bettieq}, $J=(x_{p-1}x_p^{w})$ and $K=(x_{1}x_{2},\ldots,x_{p-2}x_{p-1},x_{p}x_{p+1},\ldots,x_{n-1}x_{n})$ is a Betti splitting of $I(\P)$.  We can view the ideal
$K$ as the edge ideal of the disjoint union of two paths with trivial weights. Thus by \Cref{rem:pc} we have
    \[
        \reg(R/K)=\BL{\frac{p-1+1}{3}\BR} +\BL{\frac{n-(p-1)+1}{3}\BR} =\BL{\frac{p}{3}\BR} +\BL{\frac{n-p+2}{3}\BR}.
    \]
Let $L=(x_{1}x_{2},\ldots,x_{p-4}x_{p-3},x_{p-2},x_{p+1},x_{p+2}x_{p+3},\ldots,x_{n-1}x_{n})$, then $J\cap K= JL$. Since $L$ can be viewed as the sum of an edge ideal of the disjoint union of two paths and an ideal generated by variables not in those paths, by \Cref{rem:Ix} we have
    \[
        \reg(R/L) =\BL{\frac{p-3+1}{3}\BR} +\BL{\frac{n-(p+1)+1}{3}\BR}  =\BL{\frac{p-2}{3}\BR} +\BL{\frac{n-p}{3}\BR}.
    \]
Thus by \Cref{JL}, $\displaystyle  \reg (R/(J\cap K))-1= w+\BL{\frac{p-2}{3}\BR} +\BL{\frac{n-p}{3}\BR}$ since  $\reg (R/J)=w.$  Observe that 
    \[
        \reg(R/J\cap K)-1  =w+\BL{\frac{p-2}{3}\BR} +\BL{\frac{n-p}{3}\BR} \geq \BL{\frac{p}{3}\BR} +\BL{\frac{n-p+2}{3}\BR} =\reg(R/K).
    \]
Therefore, $\displaystyle \reg (\P) =\reg(R/J\cap K)-1 =w+\BL{\frac{p-2}{3}\BR} +\BL{\frac{n-p}{3}\BR}$ by  \Cref{cor:bettieq}. Since $\displaystyle  w+\BL{\frac{p-2}{3}\BR} +\BL{\frac{n-p}{3}\BR}=w+\BL{\frac{p+1}{3}\BR} +\BL{\frac{n-p}{3}\BR}-1$, the lemma is proved.

\end{proof}

\begin{lemma}\label{pathWeights}
Let $\P$ be a weighted naturally oriented path on $n$-vertices with $w_1 = w_2 = \dots = w_{\ell-1} = 1$ and $w_j>1$ for all $\ell \leq j \leq n-1$.  Then
    \[
        \reg(\P) = \BL \frac{\ell+1}{3}\BR + \sum_{i=\ell}^n w_i-(n-l+1).
    \]

\end{lemma}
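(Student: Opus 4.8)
The plan is to prove this by induction on $n-\ell$, the number of consecutive non-trivial weights at the end of the path, using the Betti splitting of \Cref{cor:bettieq} at the last weighted vertex $x_n$. The base case $\ell = n$ (no non-trivial weights, since $w_n$ is excluded from the hypothesis range but $w_1=\dots=w_{n-1}=1$) reduces to \Cref{rem:pc} up to an additive shift, or more usefully the base case $\ell = n-1$ (a single non-trivial weight at $x_{n-1}$) is exactly \Cref{prop:path1} with $p = n-1$: there, $\reg(\P) = w_{n-1} + \lfloor (n-(n-1))/3\rfloor + \lfloor n/3 \rfloor - 1 = w_{n-1} + \lfloor n/3\rfloor - 1$, and I would check this matches $\lfloor(\ell+1)/3\rfloor + \sum_{i=\ell}^n w_i - (n-\ell+1)$ when $\ell = n-1$ (here the sum is $w_{n-1}+w_n$ but $w_n=1$ since $x_n$ is a sink... wait, $x_n$ is a leaf-sink, not a source, so it may carry weight; actually re-examining, the formula's sum runs to $n$ while the hypothesis only forces $w_j > 1$ for $j \le n-1$, so $w_n$ is arbitrary). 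I would sort out this boundary bookkeeping carefully at the start.

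For the inductive step, write $J = (x_{n-1}x_n^{w_n})$ if $w_n > 1$, otherwise split at $x_{n-1}$; in the main case split at $x_n$ with $J = (x_{n-1}x_n^{w_n})$ and $K = (x_1x_2,\dots,x_{n-2}x_{n-1})$, which is the edge ideal of a weighted naturally oriented path $\P'$ on $x_1,\dots,x_{n-1}$ with the same weight pattern (non-trivial from $x_\ell$ to $x_{n-2}$, trivial before $x_\ell$). By induction $\reg(R/K) = \reg(\P') = \lfloor(\ell+1)/3\rfloor + \sum_{i=\ell}^{n-1} w_i - (n-1-\ell+1)$. Next I need $\reg(R/(J\cap K))$: since $x_n$ appears in no generator of $K$, one checks $J \cap K = J\cdot L$ where $L = (x_1x_2,\dots,x_{n-3}x_{n-2},\, x_{n-1})$ restricted appropriately — here the key point is that $x_{n-1}x_n^{w_n}$ times the generators of $K$ reduces, via the $x_{n-1}$ already present in $J$, to $x_{n-1}$ itself plus the path ideal on $x_1,\dots,x_{n-2}$ with vertex $x_{n-2}$ removed (because $x_{n-2}x_{n-1} \in K$ contributes only $x_{n-1}$). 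So $L$ is the edge ideal of the weighted path on $x_1,\dots,x_{n-3}$ (weights non-trivial from $x_\ell$, trivial before) together with the extra variable $x_{n-1} \notin \supp$ of that path. Then \Cref{rem:Ix} and \Cref{lem:disjoint} give $\reg(R/L) = \reg(\P'')$ where $\P''$ is a path on $x_1,\dots,x_{n-3}$, and by induction (or \Cref{prop:path1}/\Cref{pathWeights} on a shorter path) this is $\lfloor(\ell+1)/3\rfloor + \sum_{i=\ell}^{n-3}w_i - (n-3-\ell+1)$, with the convention that if $n-3 < \ell$ it degenerates to a pure path. By \Cref{JL}, $\reg(R/(J\cap K)) - 1 = \reg(R/J) + \reg(R/L) = w_n + \reg(R/L)$.

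Finally, apply \Cref{cor:bettieq}: $\reg(\P) = \max\{w_n,\ \reg(R/K),\ \reg(R/(J\cap K))-1\}$. I expect the maximum to always be achieved by the third term $w_n + \reg(R/L)$, exactly as happened in \Cref{prop:path1}; verifying $w_n + \reg(R/L) \ge \reg(R/K)$ and $\ge w_n$ is a floor-function inequality comparing $\sum_{i=\ell}^{n-1}w_i - (n-\ell)$ against $w_n + \sum_{i=\ell}^{n-3}w_i - (n-2-\ell)$, i.e. comparing $w_{n-2}+w_{n-1}-2$ against $w_n + (\text{floor adjustments})$; since each $w_i \ge 2$ for $\ell \le i \le n-1$ this should go through with room to spare. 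Simplifying $w_n + \lfloor(\ell+1)/3\rfloor + \sum_{i=\ell}^{n-3}w_i - (n-2-\ell)$ and checking it equals $\lfloor(\ell+1)/3\rfloor + \sum_{i=\ell}^n w_i - (n-\ell+1)$ amounts to $w_n + \sum_{i=\ell}^{n-3}w_i - (n-2-\ell) = \sum_{i=\ell}^n w_i - (n-\ell+1)$, i.e. $-(n-2-\ell) = w_{n-2}+w_{n-1} - (n-\ell+1)$, i.e. $w_{n-2}+w_{n-1} = 3$ — which is false in general, so the identification of $L$'s path endpoint must actually be $x_{n-2}$, not $x_{n-3}$; I would recompute $J\cap K$ with care. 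The main obstacle is precisely getting $J \cap K$ (equivalently the ideal $L$) exactly right and tracking how deleting the vertex $x_{n-1}$ (or $x_{n-2}$) interacts with the weighted-path structure and the floor functions; once $L$ is correctly identified as a shorter weighted naturally oriented path of the same type (possibly with an isolated extra variable), the induction and the max-computation are routine.
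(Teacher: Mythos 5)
Your proposal has a genuine unresolved gap at its central step, and you flagged it yourself without fixing it: the identification of $J\cap K$. With $J=(x_{n-1}x_n^{w_n})$ and $K$ generated by the remaining edges, the only generator of $K$ sharing a variable with $J$ is $x_{n-2}x_{n-1}^{w_{n-1}}$, and $\lcm(x_{n-1}x_n^{w_n},\,x_{n-2}x_{n-1}^{w_{n-1}})=x_{n-2}x_{n-1}^{w_{n-1}}x_n^{w_n}$; hence $J\cap K=J\cdot L$ where $L$ consists of all generators of $K$ except $x_{n-2}x_{n-1}^{w_{n-1}}$, together with $x_{n-2}x_{n-1}^{w_{n-1}-1}$. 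In particular $L$ is not a trivially weighted path plus the lone variable $x_{n-1}$ as you wrote: the non-trivial weights $w_\ell,\dots,w_{n-2}$ persist in $L$, and its last generator carries $x_{n-1}^{w_{n-1}-1}$. This is exactly why your final arithmetic check produced the false identity $w_{n-2}+w_{n-1}=3$. Worse, even after correcting $L$, you cannot invoke \Cref{JL}: since $w_{n-1}\ge 2$, the variable $x_{n-1}$ lies in the supports of both $J$ and $L$, so the disjoint-support hypothesis fails. You would need instead the general shift fact $\reg(R/(mL))=\deg(m)+\reg(R/L)$ for a monomial $m$ (true because $mL\cong L(-\deg m)$, but not among the paper's stated tools), or a different decomposition. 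Finally, the case $w_n=1$, where \Cref{cor:bettieq} cannot be applied at $x_n$, is deferred to "split at $x_{n-1}$" but never carried out; there the analogous $L$ again has support overlapping $J$.

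For comparison, the paper avoids all of this by splitting at the \emph{first} non-trivially weighted vertex, $J=(x_{\ell-1}x_\ell^{w_\ell})$: because the left neighbour $x_{\ell-1}$ has trivial weight, the lcm quotients drop both $x_{\ell-1}$ and $x_\ell$, so $L=(x_1x_2,\ldots,x_{\ell-4}x_{\ell-3},\,x_{\ell-2},\,x_{\ell+1}^{w_{\ell+1}},x_{\ell+1}x_{\ell+2}^{w_{\ell+2}},\ldots,x_{n-1}x_n^{w_n})$ has support disjoint from $J$ and \Cref{JL} applies; moreover no induction is needed, since $\reg(R/K)$ and $\reg(R/L)$ follow directly from \Cref{rem:pc}, \Cref{trivial}, and \Cref{lem:disjoint}, with a polarization step handling the piece $(x_{\ell+1}^{w_{\ell+1}},x_{\ell+1}x_{\ell+2}^{w_{\ell+2}},\ldots)$. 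If you wish to salvage your right-end induction, you must correct $L$ as above, justify the degree-shift for the principal factor without \Cref{JL}, and treat $w_n=1$ separately; splitting at the trivial/non-trivial boundary is what makes the bookkeeping clean.
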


\begin{proof}
The edge ideal of $\P$ is $I(\P) = (x_1x_2, \dots, x_{\ell-2}x_{\ell-1}, x_{\ell -1}x_\ell^{w_\ell}, x_{\ell}x_{\ell+1}^{w_{\ell+1}}, \dots, x_{n-1}x_n^{w_n})$. Let $J=(x_{\ell-1}x_{\ell}^{w_{\ell}})$ and
and $K=(x_1x_2, \ldots, x_{\ell-2}x_{\ell-1}, x_{\ell}x_{\ell+1}^{w_{\ell+1}}, \ldots, x_{n-1}x_n^{w_n}).$  By \Cref{cor:bettieq}, one can see that $I(\P) = J + K$ is a Betti splitting and therefore $\reg (\P) = \max\{\reg(R/J), \reg(R/K), \reg(R/(J\cap K))-1\}$.  

The ideal $K$ is the sum of the edge ideal of a path on $(\ell-1)$ vertices with trivial weights and a weighted naturally oriented path on $(n-\ell+1)$ vertices where the weights of all non-leaf vertices are non-trivial. Then, by \Cref{lem:disjoint}, \Cref{rem:pc}, and  \Cref{trivial}, we get
    \[
        \reg (R/K)= \BL{\frac{\ell}{3}\BR} + 1 + \sum_{i=\ell+1}^n w_{i}- (n-\ell).
    \]

Let $ L= (x_1x_2, \ldots,x_{\ell-4}x_{\ell-3}, x_{\ell-2}, x_{\ell+1}^{w_{\ell+1}}, x_{\ell+1}x_{\ell+2}^{w_{\ell+2}}, \ldots, x_{n-1}x_n^{w_n})$, then $J \cap K = JL$. 

The ideal $L$ is the sum of the ideal $L_1=(x_{\ell+1}^{w_{\ell+1}}, x_{\ell+1}x_{\ell+2}^{w_{\ell+2}}, \ldots, x_{n-1}x_n^{w_n}),$ the edge ideal of a path on $(\ell-3)$ vertices with trivial weights which has no variables in common with $L_1$, and a variable $x_{\ell-2}$. Further, the generator $x_{\ell-2}$ does not effect the regularity of $R/L$ by \Cref{rem:Ix}. The ideal $L_1$ has the same polarization as the ideal $(x_{\ell}x_{\ell+1}^{w_{\ell+1}-1}, x_{\ell+1}x_{\ell+2}^{w_{\ell+2}}, \ldots, x_{n-1}x_n^{w_n})$ after a relabeling of the variables.  The regularity of this latter ideal can be computed using  \Cref{trivial} and thus $\reg(R/L_1) = \sum_{i=\ell+1}^n w_{p_i}-(n-\ell) $. It follows from \Cref{lem:disjoint} that$$\reg (R/L)= \BL \frac{\ell-2}{3}\BR + \sum_{i=\ell+1}^n w_{p_i}-(n-\ell).$$

By \Cref{JL}, we have
    \begin{eqnarray*}
        \reg (R/(J \cap K))-1
            &=& 
                w_{\ell}+ \BL{\frac{\ell-2}{3}\BR}+\sum_{i=\ell+1}^n w_i-(n-\ell)\\
            &=&  
                \BL{\frac{\ell-2}{3}\BR}+1-1+ w_\ell + \sum_{i=\ell+1}^n w_i-(n-\ell) \\
            &=&  
                \BL{\frac{\ell+1}{3}\BR}+(w_\ell -1)+\sum_{i=\ell+1}^n w_i-(n-\ell)\\
            &\geq& 
                \BL{\frac{\ell}{3}\BR} + 1 +  \sum_{i=\ell+1}^n w_{i}- (n-\ell) \\
            &=& 
                \reg (R/K)
    \end{eqnarray*}
where the last inequality follows from the assumption that $w_{\ell} \geq 2.$ One can see that $\reg (R/(J\cap K))-1 \geq \reg(R/J) = w_\ell$.  Therefore, $\reg (\P)=\reg (R/(J \cap K))-1  $ by \Cref{cor:bettieq}, and the desired equality holds. 
\end{proof}

Our next result is general in the sense that we consider weighted naturally oriented paths with arbitrary numbers of non-trivial weights.  Both the values of the weights and their positions factor into our formula for the regularity of a path, motivating the following definition.

\begin{definition}\label{p_i}
Let $\P$ be a  weighted naturally oriented path on $n$ vertices and $2\le p_1< p_2\le \ldots <p_k\le n$ be the positions of non-trivial weights in $\P$ for $k\geq 1.$ We call  $p=\left(p_1,\ldots, p_k\right)$ the \emph{weight  sequence} of $\P.$ 
\end{definition}

\begin{notation}\label{not}
\sloppy Let $\P$ be a weighted naturally oriented path on $n$-vertices with the weight sequence $\left(p_1,\ldots, p_k\right)$.  In what follows we will abuse notation and write $(q_1, \dots, q_t) \subseteq (p_1, \dots, p_k)$ to mean that $(q_1, \dots, q_t)$ is a subsequence of $(p_1, \dots, p_k)$. 

Let $\mfS$ be the collection of subsequences of the weight sequence $\left(p_1,\ldots, p_k\right)$ where the difference between any consecutive elements of the subsequence is not equal to two, i.e.
    \[
        \mfS= \left\{ \left(q_1,\ldots, q_t\right) \subseteq \left(p_1,\ldots, p_k\right) ~:~   q_{i+1}-q_i \neq 2\text{ for each } i \in \left\{1, \ldots, t-1\right\}\right\}.
    \]

\end{notation}

\begin{example} Let $ \P_1$ be a path on the vertices $x_1,\ldots, x_7$ and $(2,3,5,6)$ be the weight sequence of $\P_1,$ i.e. $w_2,w_3,w_5,w_6 \ge 2$ and $w_4,w_7=1.$  In what follows, we are interested in the elements of $\mfS$ which are maximal with respect to inclusion. The maximal elements of $\mfS$ are
    \[
        \left\{\left(2,3,6\right), \left(2,5,6\right)\right\}.
    \]
Let $ \P_2$ be a path on the vertices $x_1,\ldots, x_7$ and $(2,4,6,7)$ be the weight sequence of $\P_2,$ i.e. $w_2,w_4,w_6,w_7 \ge 2$ and $w_3,w_5=1.$ Then the maximal elements of $\mfS$ are
    \[
        \left\{ \left(2,6,7\right),\left(4,7\right)\right\}.
    \]
\end{example}

As the previous example illustrates, the maximal elements of the set $\mfS$ always begin with $p_1$ or $p_1+2$ with the latter only occuring if $p_2 = p_1+2$.

\begin{definition}\label{not2}
For a  weighted naturally oriented path $\P$ on $n$-vertices with the weight sequence $(p_1,\ldots, p_k),$ we define  \emph{weight-position sum} of $\textbf{q}=\left (q_1,\ldots, q_t\right) \in \mfS$ as follows 
    \[
        \sum {\bf q} = \sum(q_1,\ldots, q_t) :=  \sum_{i=1}^t w_{q_i} + \sum_{i=1}^{t-1} {\BL{\frac{q_{i+1}-q_i}{3}}\BR} -t.
    \]
\end{definition}

We will be interested in the largest element of the set $\left\{ \sum \q+{\BL{\frac{n-q_t}{3}}\BR}+{\BL{\frac{q_1+1}{3}}\BR} \mid \q \in \mfS \right\}$.  To simplify the proof of our main result \Cref{path:gen} we present the following lemma which states that the maximum element of this set always occurs at a maximal (with respect to inclusion) element of $\mfS$.

\begin{lemma}\label{maximal}
If $\P$ is a weighted naturally oriented path with weight sequence ${\bf p}$ and ${\bf q}\subseteq {\bf q'}$ are elements of $\mfS,$ then  
\[
    \sum {\bf q} +\BL \frac{n-q_t}{3} \BR + \BL \frac{q_1+1}{3}\BR \leq \sum {\bf q'}+ \BL \frac{n-q'_{t'}}{3} \BR + \BL \frac{q'_1+1}{3}\BR.
\]

Moreover, 
\[
    \max_{\textbf{q} \in \mfS} \left\{ \sum  \q +{\BL{\frac{n-q_t}{3}}\BR}+{\BL{\frac{q_1+1}{3}}\BR}\right\} = \max_{\substack{\textbf{q} \in \mfS \\ q_1 = p_1, p_1+2}} \left\{ \sum \q +{\BL{\frac{n-q_t}{3}}\BR}+{\BL{\frac{q_1+1}{3}}\BR}\right\}.
\]
\end{lemma}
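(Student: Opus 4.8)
The plan is to recognize that the quantity attached to a subsequence in the lemma is, after introducing the virtual endpoints $q_{0}:=-1$ and $q_{t+1}:=n$, a sum of gap contributions that behaves monotonically. Concretely, for an arbitrary increasing subsequence $\mathbf{q}=(q_{1},\dots,q_{t})$ of the weight sequence $\mathbf{p}$ I would set
\[
    F(\mathbf{q})\ :=\ \sum_{i=1}^{t}w_{q_{i}}\ +\ \sum_{i=0}^{t}\left\lfloor\frac{q_{i+1}-q_{i}}{3}\right\rfloor\ -\ t .
\]
Since $\lfloor\tfrac{q_{1}+1}{3}\rfloor=\lfloor\tfrac{q_{1}-q_{0}}{3}\rfloor$ and $\lfloor\tfrac{n-q_{t}}{3}\rfloor=\lfloor\tfrac{q_{t+1}-q_{t}}{3}\rfloor$, for $\mathbf{q}\in\mfS$ this $F(\mathbf{q})$ is precisely $\sum\mathbf{q}+\lfloor\tfrac{n-q_{t}}{3}\rfloor+\lfloor\tfrac{q_{1}+1}{3}\rfloor$. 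The point of this reformulation is that $F$ is defined on \emph{every} subsequence of $\mathbf{p}$, and I will prove the sharper statement that $F$ is non-decreasing with respect to inclusion of subsequences of $\mathbf{p}$; the first inequality of the lemma is then the special case $\mathbf{q},\mathbf{q}'\in\mfS$.

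I would first record the elementary fact that $\lfloor\tfrac a3\rfloor+\lfloor\tfrac b3\rfloor\ge\lfloor\tfrac{a+b}{3}\rfloor-1$ for all integers $a,b\ge 0$ (writing $a=3s+r$, $b=3u+v$ with $0\le r,v\le 2$, the difference of the two sides equals $1-\lfloor\tfrac{r+v}{3}\rfloor\ge 0$ because $r+v\le 4$). Next, for a subsequence $\mathbf{s}$ of $\mathbf{p}$ and a position $p$ of non-trivial weight with $p\notin\mathbf{s}$, say $s_{i}<p<s_{i+1}$ (allowing $s_{i}$ or $s_{i+1}$ to be a virtual endpoint), inserting $p$ adds $w_{p}$ to the weight sum, replaces the gap term $\lfloor\tfrac{s_{i+1}-s_{i}}{3}\rfloor$ by $\lfloor\tfrac{p-s_{i}}{3}\rfloor+\lfloor\tfrac{s_{i+1}-p}{3}\rfloor$, and increases the length by one, so
\[
    F(\mathbf{s}\cup\{p\})-F(\mathbf{s})\ =\ w_{p}+\left\lfloor\frac{p-s_{i}}{3}\right\rfloor+\left\lfloor\frac{s_{i+1}-p}{3}\right\rfloor-\left\lfloor\frac{s_{i+1}-s_{i}}{3}\right\rfloor-1\ \ge\ w_{p}-2\ \ge\ 0,
\]
where the first inequality is the elementary fact with $a=p-s_{i}$, $b=s_{i+1}-p$ (both nonnegative, using $p_{1}\ge 2$ and $p_{k}\le n$ at the virtual endpoints) and the last holds since $p$ has non-trivial weight. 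Adjoining the elements of $\mathbf{q}'\setminus\mathbf{q}$ one at a time---every intermediate sequence remains a subsequence of $\mathbf{p}$---then gives $F(\mathbf{q})\le F(\mathbf{q}')$, which is the first assertion.

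For the displayed equality: the restricted set $\{\mathbf{q}\in\mfS:\ q_{1}=p_{1}\text{ or }q_{1}=p_{1}+2\}$ is contained in $\mfS$, so the maximum of $F$ over it is at most the maximum over $\mfS$; for the reverse inequality, given any $\mathbf{q}\in\mfS$, pick a $\subseteq$-maximal element $\mathbf{q}^{\ast}$ of the finite set $\mfS$ with $\mathbf{q}\subseteq\mathbf{q}^{\ast}$, so that $F(\mathbf{q})\le F(\mathbf{q}^{\ast})$ by the first part, and note $q_{1}^{\ast}\in\{p_{1},p_{1}+2\}$ (indeed if $q_{1}^{\ast}\neq p_{1}$ and $q_{1}^{\ast}-p_{1}\neq 2$ then $(p_{1},q_{1}^{\ast},\dots,q_{t'}^{\ast})\in\mfS$ properly contains $\mathbf{q}^{\ast}$, contradicting maximality). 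Hence every $F(\mathbf{q})$ with $\mathbf{q}\in\mfS$ is at most some $F$-value over the restricted set, and the two maxima coincide.

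I expect the only real obstacle to be conceptual rather than computational: one \emph{cannot} in general pass from $\mathbf{q}$ to $\mathbf{q}'$ through a chain inside $\mfS$ by adjoining one element at a time (for example $\mathbf{q}=(a,a+3)$ and $\mathbf{q}'=(a,a+1,a+2,a+3)$ have no valid intermediate step), so a naive induction fails; it is precisely the passage to $F$ on \emph{all} subsequences of $\mathbf{p}$, where single insertions are unconstrained, that makes the induction go through, after which only the bookkeeping with the floor function and the two virtual endpoints remains.
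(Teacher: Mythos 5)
Your proof is correct and takes essentially the same approach as the paper: the key step is that inserting a single non-trivially weighted position $r$ loses at most $1$ across the floor terms while gaining $w_r-1\geq 1$, iterated to get monotonicity under inclusion, followed by the observation that $\subseteq$-maximal elements of $\mfS$ must begin at $p_1$ or $p_1+2$. Your virtual-endpoint reformulation simply unifies the interior, front, and back insertion cases that the paper handles as ``similar arguments,'' and your explicit remark that the insertion chain may pass through subsequences of $\mathbf{p}$ outside $\mfS$ (harmless, since the one-step inequality needs only that the intermediate sequences are subsequences of $\mathbf{p}$) makes precise a point the paper leaves implicit.
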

\begin{proof}
First note that if ${\bf q} = (q_1, \dots, q_t) \subseteq {\bf q'} = (q_1, \dots, q_j, r, q_{j+1}, \dots, q_t) \subseteq {\bf p}$ then since $r$ is a position of a  non-trivial weight of $\P$, we have $w_r-1 \geq 1.$ In addition, it follows from the properties of floor functions that  $\BL \frac{q_{j+1}-q_j}{3}\BR \leq \BL \frac{q_{j+1}-r}{3}\BR + \BL \frac{r-q_j}{3}\BR + 1.$ Therefore  \begin{eqnarray*}
        \sum {\bf q} &=& \sum_{i = 1}^t w_{q_i} + \sum_{i =1}^{j-1}\BL \frac{q_{i+1}-q_i}{3}\BR + \BL \frac{q_{j+1}-q_j}{3}\BR + \sum_{i =j+1}^{t-1}\BL \frac{q_{i+1}-q_i}{3}\BR -t \\
        &\leq& \sum_{i = 1}^t w_{q_i} + w_r -1 + \sum_{i =1}^{j-1}\BL \frac{q_{i+1}-q_i}{3}\BR + \BL \frac{q_{j+1}-r}{3}\BR + \BL \frac{r-q_j}{3}\BR + \sum_{i =j+1}^{t-1}\BL \frac{q_{i+1}-q_i}{3}\BR -t \\
        &=& \sum_{i = 1}^t w_{q_i} + w_r + \sum_{i =1}^{j-1}\BL \frac{q_{i+1}-q_i}{3}\BR + \BL \frac{q_{j+1}-r}{3}\BR + \BL \frac{r-q_j}{3}\BR + \sum_{i =j+1}^{t-1}\BL \frac{q_{i+1}-q_i}{3}\BR -(t+1)\\
        &=& \sum {\bf q'} 
    \end{eqnarray*}
    Then, for any such ${\bf q}, {\bf q}' \in \mfS,$ we have
    $$  \sum {\bf q} + \BL \frac{n-q_t}{3} \BR + \BL \frac{q_1+1}{3}\BR \leq \sum {\bf q'} + \BL \frac{n-q_t}{3} \BR + \BL \frac{q_1+1}{3}\BR. $$
    
    Similar arguments can be made when ${\bf q'} = (r, q_1, \dots, q_t)$ and when ${\bf q'} = (q_1, \dots, q_t, r)$.
    
Extending this idea we can see that if ${\bf q}=(q_1,\ldots, q_t)$ and ${\bf q'}=(q'_1,\ldots, q'_s)$ are any elements of $\mfS$ with ${\bf q\subseteq q'}$ then 
    \[
    \sum {\bf q} +\BL \frac{n-q_t}{3} \BR + \BL \frac{q_1+1}{3}\BR \leq \sum {\bf q'}+ \BL \frac{n-q'_s}{3} \BR + \BL \frac{q'_1+1}{3}\BR.
    \] 
   The final part of the lemma follows from the above inequality and the observation that the maximal elements of the set $\mfS$ begin with $p_1$ or $p_1+2$ where the latter occurs if $p_2=p_1+2.$

    \end{proof}

\begin{example}\label{ex:maximal}
Let $\P_8^1$ and $\P_8^2$ be as shown in Figure \ref{fig:PathMaximal}. 

    \begin{figure}[ht]
        \centering
        \includegraphics{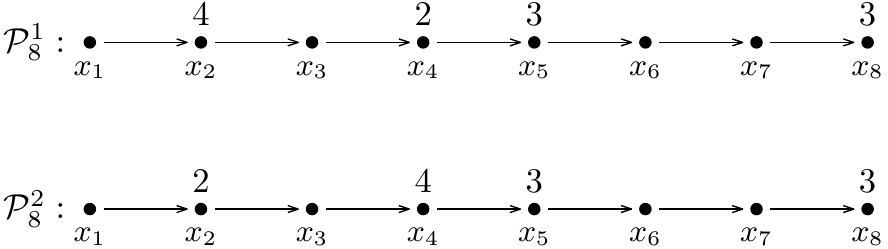}
        \caption{Same weight sequence, different weight functions.}
        \label{fig:PathMaximal}
    \end{figure}

These graphs have the same weight sequence $(2,4,5,8)$ and the same total weight sum. Note that since the two graphs have the same weight sequence, the set $\mfS$ as defined in $\Cref{not}$ will be the same for both graphs.

Given $\q\in\mfS$ define $f(\q)=\sum\q+\BL\frac{n-q_t}{3}\BR+\BL\frac{q_1+1}{3}\BR,$ the expression introduced in \Cref{maximal}. For each of the graphs $\P_8^1$ and $\P_8^2$, the following tables provides the value of $f(\q)$ for each maximal element $\q\in\mfS$.

    \[  
        \P_8^1: ~\begin{array}{r|cc}
            \q & (2,5,8) & (4,5,8)  \\ \hline
            f(\q) & 10 & 7
        \end{array}\hspace{1cm}
        \P_8^2: ~\begin{array}{r|cc}
            \q & (2,5,8) & (4,5,8)\\ \hline
            f(\q) & 8 & 9
        \end{array}
    \]
    
Here we see that the maximum of value of $\sum\q+\BL\frac{n-q_t}{3}\BR+\BL\frac{q_1+1}{3}\BR$ could come from $q_1=p_1=2$ as in $\P_8^1$ or from $q_1=p_1+2=4$ as in $\P_8^2$.  

Furthermore, using Macaulay 2 \cite{M2}, we see that $\reg(\P_8^1)=10$ and $\reg(\P_8^2)=9$, providing evidence that the regularity of the graphs is given by $f(\q)$ which, in turn, depends on both the positions of weighted sequence and the values of the weights.

\end{example}

\begin{theorem}\label{path:gen}
Let $\P$ be a weighted naturally oriented path on $n$-vertices $x_1,\ldots, x_n$  with the weight sequence $(p_1,\ldots, p_k).$ Then
    \[
    \reg (\P) = \max_{\textbf{q} \in \mfS} \left\{ \sum (q_1,\ldots, q_t)+{\BL{\frac{n-q_t}{3}}\BR}+{\BL{\frac{q_1+1}{3}}\BR}\right\}.
    \]
\end{theorem}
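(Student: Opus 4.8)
The plan is to induct on the number $k$ of non-trivial weights of $\P$, using the Betti splitting from \Cref{cor:bettieq} at the \emph{last} non-trivial weight $p_k$. For the base case $k=1$, the formula reduces to the statement of \Cref{prop:path1}: indeed with $p=(p_1)$ the only element of $\mfS$ is $(p_1)$ itself, and $\sum(p_1) = w_{p_1}$, so the right-hand side is $w_{p_1}+\BL\frac{n-p_1}{3}\BR+\BL\frac{p_1+1}{3}\BR = \reg(\P)$ by \Cref{prop:path1}. For the inductive step I would set $J = (x_{p_k-1}x_{p_k}^{w_{p_k}})$ and $K = \G(I(\P))\setminus\{x_{p_k-1}x_{p_k}^{w_{p_k}}\}$, apply \Cref{cor:bettieq} to get
\[
\reg(\P) = \max\{\,w_{p_k},\ \reg(R/K),\ \reg(R/(J\cap K))-1\,\},
\]
and analyze each of the three terms combinatorially. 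The ideal $K$ is the edge ideal of the disjoint union of a weighted naturally oriented path $\P'$ on the vertices $x_1,\dots,x_{p_k-1}$ (with weight sequence $(p_1,\dots,p_{k-1})$) and an unweighted path on the vertices $x_{p_k},\dots,x_n$; by \Cref{lem:disjoint}, \Cref{rem:pc}, and the inductive hypothesis, $\reg(R/K)$ becomes $\max_{\q\in\mfS'}\{\cdots\} + \BL\frac{n-p_k+2}{3}\BR$ where $\mfS'$ is the analogue of $\mfS$ for $\P'$. For the third term I would follow the template of the proofs of \Cref{prop:path1} and \Cref{pathWeights}: write $J\cap K = JL$ for an explicit ideal $L$ depending on whether $p_{k-1}=p_k-2$ or not, apply \Cref{JL} and \Cref{rem:Ix}, and recognize $\reg(R/L)-1$ (after adding $w_{p_k}$) as the contribution of the subsequences of $\mfS$ that end in $p_k$ (respectively, in $p_{k-1}$ when $p_{k-1}=p_k-2$, accounting for why the difference $2$ is forbidden in the definition of $\mfS$).

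The heart of the argument is then a purely combinatorial reconciliation: one must show that the maximum over $\mfS$ splits according to whether the chosen subsequence $\q$ ends at $p_k$ or not, that the "ends at $p_k$" case matches $\reg(R/(J\cap K))-1$, and that the "does not end at $p_k$" case matches $\reg(R/K)$, with the troublesome middle case $p_{k-1}=p_k-2$ (where neither $(\dots,p_{k-1},p_k)$ nor a gap-2 subsequence is allowed) handled by the floor-function inequality $\BL\frac{a+b+2}{3}\BR \le \BL\frac a3\BR + \BL\frac b3\BR + 1$ already used in the proof of \Cref{maximal}. I expect to invoke \Cref{maximal} to reduce attention to maximal subsequences, and to use the identity $\BL\frac{p-2}{3}\BR = \BL\frac{p+1}{3}\BR - 1$ repeatedly to pass between the "$-1$ from the colon" bookkeeping and the $\BL\frac{q_1+1}{3}\BR$ in the target formula.

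The main obstacle will be the bookkeeping in the step $\reg(R/(J\cap K)) - 1$ when $p_{k-1} = p_k-2$: here the natural ideal $L$ has the variable $x_{p_k-1} = x_{p_{k-1}+1}$ interacting with the weighted part near $x_{p_{k-1}}$, so $L$ is no longer a clean disjoint union, and one has to argue (as in the $L_1$ analysis inside the proof of \Cref{pathWeights}) that after polarization and relabeling $L$ degenerates to a weighted path whose regularity is again governed by $\mfS$ restricted to subsequences ending at $p_{k-1}$ — which is exactly the combinatorial reason the difference $2$ is excluded from $\mfS$. A secondary subtlety is confirming the "$\geq$" comparisons (so that the maximum in \Cref{cor:bettieq} is actually attained by the colon term or, when it is not, that $\reg(R/K)$ coincides with the $\mfS$-formula anyway); I would handle this exactly as in Lemmas~\ref{prop:path1} and~\ref{pathWeights}, noting $w_{p_k}\ge 2$ and using the inductive formula to dominate. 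Once these comparisons are in place, combining the three terms yields precisely $\max_{\q\in\mfS}\{\sum\q + \BL\frac{n-q_t}{3}\BR + \BL\frac{q_1+1}{3}\BR\}$, completing the induction.
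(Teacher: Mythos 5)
Your strategy is genuinely different from the paper's. The paper also inducts on $k$, but its inductive step uses no Betti splitting: it picks the smallest position $p_\ell$ of a non-trivial weight whose successor vertex has trivial weight, sandwiches $\reg(\P)$ between $\max\{\reg(R/(I(\P),x_{p_\ell})),\reg(R/(I(\P),x_{p_\ell+1}))\}$ and $\max\{\reg(R/(I(\P),x_{p_\ell})),\reg(R/(I(\P):x_{p_\ell}))+1\}$ via \Cref{reg} and \Cref{oneVar}, shows the bounds collapse because $\reg(R/(I(\P),x_{p_\ell+1}))=\reg(R/(I(\P):x_{p_\ell}))+1$ (both decompose into pieces handled by \Cref{pathWeights} and \Cref{lem:disjoint}), and then treats the cases $\ell=k$, $1<\ell<k$, $\ell=1$, applying the induction hypothesis to the right-hand tail; \Cref{cor:bettieq} is used only inside \Cref{prop:path1} and \Cref{pathWeights}. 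Your alternative of splitting at the last weighted edge $J=(x_{p_k-1}x_{p_k}^{w_{p_k}})$ is plausible, and your identifications of $K$ and of $J\cap K=JL$ are correct.

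However, the step you yourself call the heart of the argument is not carried out, and its guiding claim is false as stated: the terms of the target maximum with $q_t\neq p_k$ do not match $\reg(R/K)$. By \Cref{lem:disjoint} and the induction hypothesis, $\reg(R/K)$ is a maximum of quantities $\sum\q'+\BL\frac{p_k-1-q'_{t'}}{3}\BR+\BL\frac{q'_1+1}{3}\BR+\BL\frac{n-p_k+2}{3}\BR$, and $\BL\frac{p_k-1-q_t}{3}\BR+\BL\frac{n-p_k+2}{3}\BR$ need not equal $\BL\frac{n-q_t}{3}\BR$; it can be strictly larger. For example, with $n=7$ and weight sequence $(2,6)$ one gets $\reg(R/K)=w_2+2$, while the best target term with $q_t\neq 6$ is $w_2+1$; the splitting is consistent with the theorem only because $w_2+2\le w_2+w_6$, i.e.\ because $\reg(R/K)$ is absorbed by the subsequence $(2,6)$, which \emph{does} end at $p_k$, using $w_{p_k}\ge 2$. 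So what is actually needed is the two-sided statement that each of $w_{p_k}$, $\reg(R/K)$, $\reg(R/(J\cap K))-1$ is bounded above by the claimed maximum and that the maximum is attained by one of them; this forces a case analysis on $p_k-q_t$ (equal to $1$, equal to $2$, at least $3$) combined with the reduction of \Cref{maximal} (after which a maximal subsequence avoiding $p_k$ must end at $p_k-2$, and only those terms of $\reg(R/K)$ reproduce the target formula on the nose). Relatedly, you misplace the delicate case: when $p_{k-1}=p_k-2$ the vertex $x_{p_k-1}$ has trivial weight, the colon produces the variable $x_{p_k-2}=x_{p_{k-1}}$, which swallows the generator $x_{p_{k-1}-1}x_{p_{k-1}}^{w_{p_{k-1}}}$, and $L$ is still a clean disjoint union (weighted path on $x_1,\ldots,x_{p_k-3}$, single variables, trivially weighted path), so no polarization argument is required; the case that changes the shape of $L$ is instead $p_{k-1}=p_k-1$, where the last weight drops to $w_{p_{k-1}}-1$ and may leave the weight sequence. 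None of this is fatal to your approach, but as written the inductive step is a sketch whose key comparisons are missing and partly misstated.
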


Before we proceed to the proof of the main theorem, we present examples to show that the formula of regularity depends not only on the weights of the vertices but also on the weight sequence.

\begin{example}
Let $\P_{12}^1$ and $\P_{12}^2$ as shown in Figure \ref{fig:PathExample2}. These two paths each have four vertices with the same weight values $5,4,3,4,$ but at different positions. Their weight sequences are $(2,5,9,12)$ and $(2,4,8,12),$ respectively. Define $f(\q)=f(q_1,\dots,q_t)$ as in \Cref{ex:maximal}.\\
Using Macaulay 2 \cite{M2}, one can calculate $\reg(\P_{12}^1)=16$ and $\reg(\P_{12}^2)=13$. We can see that $16=f(2,5,9,12)$, i.e. the regularity for $\P_{12}^1$ comes from the entire weight sequence. On the other hand, regularity for $\P_{12}^2$ is given by $13= f(2,8,12)$ where $(2,8,12)$ and  $(4,8,12)$ are the maximal elements of $\mfS$ for $\P_{12}^2$ with values $f(2,8,12) =13\geq f(4,8,12)=11.$
  \begin{figure}[h!]
      \centering
      \includegraphics[scale=1]{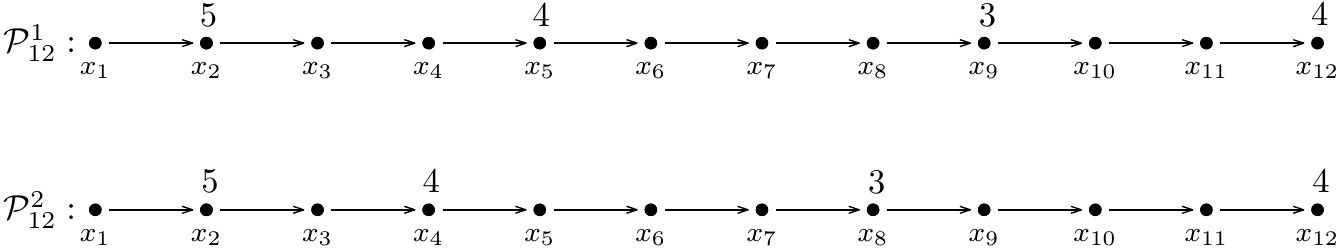}
      \caption{Two weighted oriented paths with the same non-trivial weight values and different weight sequences.}
      \label{fig:PathExample2}
  \end{figure}
  \end{example}

\begin{proof}[Proof of \Cref{path:gen}]

We proceed by induction on the number of non-trivial weights $k.$  The base case $k=1$ is proved in \Cref{prop:path1}. Recall that by \Cref{maximal} that the maximal element of the set $$ \left\{ \sum (q_1,\ldots, q_t)+{\BL{\frac{n-q_t}{3}}\BR}+{\BL{\frac{q_1+1}{3}}\BR}\right\}$$  will occur at a maximal (with respect to inclusion) element of $\mfS$.
Suppose $k >1.$ If $w_i \geq 2$ for each $i \in\{2,\ldots, n\},$ the statement holds by \Cref{trivial}. If $w_i = 1$ for all $1 \leq i \leq m$ and $w_i \geq 2$ for all $m+1\leq i\leq n-1$, the statement holds by \Cref{pathWeights}. Thus we may assume that there exists a non-trivial weight at position $p_j$ for some $j \in \{1,\ldots, k\}$ such that $w_{p_j+1} = 1$. Let $p_\ell$ be the smallest such position in the weight sequence.  Note that it is possible that $\ell=1$. These assumptions imply that $p_{\ell+1} \geq p_\ell+2$ if $\ell \neq k$ and $p_{\ell-i} = p_\ell-i$ for each $i \in \{1,\ldots, \ell-1\}.$

Under these assumptions, the edge ideal of $\P$ is
    \[
        I(\P)= (x_1x_2^{w_2}, \ldots, x_{p_\ell-1}x_{p_\ell}^{w_{p_\ell}}, x_{p_\ell}x_{p_\ell+1},x_{p_\ell+1}x_{p_\ell+2}^{w_{p_\ell+2}}, \ldots, x_{n-1}x_n^{w_n})
    \]
where $w_2, \dots, w_{p_\ell} \geq 2$ and $w_{p_{\ell+2}}, \dots, w_{p_n}$ may be anything. 

By \Cref{reg} and \Cref{oneVar} we can bound $\reg (\P)$ from above and below with the following inequalities

    \begin{eqnarray*}
        \max\{ 	\reg (R/(I(\P),x_{p_\ell})) , 	\reg (R/(I(\P),x_{p_\ell+1}))\}     &\leq & 
                \reg (\P) \\
            &\leq & 
                \max\{ 	\reg (R/(I(\P),x_{p_\ell})) , 	\reg (R/(I(\P):x_{p_\ell}))+1\}.
    \end{eqnarray*}
The remainder of the proof will consist of proving that these upper and lower bounds coincide by showing that $\reg (R/(I(\P),x_{p_\ell+1}))= \reg (R/(I(\P):x_{p_\ell}))+1$.  We will then conclude by showing that \\
$\max\{ \reg (R/(I(\P),x_{p_\ell})) , 	\reg (R/(I(\P),x_{p_\ell+1}))\}$ has the desired form.

To see that $\reg (R/(I(\P),x_{p_\ell+1}))= \reg (R/(I(\P):x_{p_\ell}))+1$ note that
    \[
        (I(\P),x_{p_\ell+1}) = (x_1x_2^{w_2}, \ldots, x_{p_\ell-1}x_{p_\ell}^{w_{p_\ell}})+ (x_{p_\ell+1})+(x_{p_\ell+2}x_{p_\ell+3}^{w_{p_\ell+3}}, \ldots, x_{n-1}x_n^{w_n})
    \]
and 
    \[
        I(\P):x_{p_\ell} = (x_1x_2^{w_2}, \ldots, x_{p_\ell-1}x_{p_\ell}^{w_{p_\ell}-1}) +  (x_{p_\ell+1}) + (x_{p_\ell+2}x_{p_\ell+3}^{w_{p_\ell+3}}, \ldots, x_{n-1}x_n^{w_n}).
    \]  
Since each of these ideals is the sum of three ideals whose generators have disjoint supports, we can calculate their regularities by summing the regularities of the component ideals by \Cref{lem:disjoint}. The regularity of $(x_1x_2^{w_2}, \ldots, x_{p_\ell-1}x_{p_\ell}^{w_{p_\ell}})$ and $(x_1x_2^{w_2}, \ldots, x_{p_\ell-1}x_{p_\ell}^{w_{p_\ell}-1})$ can be calculated using \Cref{pathWeights} and since the only difference between these ideals is the exponent of $x_{p_\ell}$ it is easy to see that $\reg (R/(I(\P),x_{p_\ell+1}))= \reg (R/(I(\P):x_{p_\ell}))+1$.  Thus $\reg(\P) = \max\{ \reg (R/(I(\P),x_{p_\ell})) , 	\reg (R/(I(\P), x_{p_\ell+1}))\}$.  It remains to show that this maximum is of the form given in the statement of the theorem.  To do this we must consider several cases.

\textit{Case 1:} First suppose that $\ell = k$.  In this case, by our assumptions on $\ell$, $p_{i+1}-p_i = 1$ for all $1 \leq i \leq k-1$ and it is clear that $\displaystyle \max_{{\bf q} \in \mfS} \left\{ \sum (q_1,\ldots, q_t)+\BL{\frac{n-q_t}{3}}\BR+\BL{\frac{q_1+1}{3}}\BR \right\}$ will occur when $(q_1, \dots, q_t) = (p_1, \dots p_k)$.  Thus we wish to show that $\displaystyle \reg(\P) = \sum_{i = 1}^k w_{p_i} + \sum_{i = 1}^{k-1}\BL\frac{p_{i+1}-p_i}{3}\BR -k +\BL\frac{n-p_k}{3}\BR + \BL \frac{p_1+1}{3}\BR$.

Note that the ideals
    \[
        (I, x_{p_k}) = (x_1x_2^{w_2}, \dots, x_{p_k-2}x_{p_k-1}^{w_{p_k-1}})+(x_{p_k})+(x_{p_k+1}x_{p_k+2}, \dots, x_{n-1}x_n)
    \]
    and 
     \[
        (I, x_{p_k+1}) = (x_1x_2^{w_2}, \dots, x_{p_k-1}x_{p_k}^{w_{p_k}})+(x_{p_k+1})+( x_{p_k+2}x_{p_k+3}, \dots, x_{n-1}x_n)
    \]
are both written as the sum of the edge ideal of a weighted naturally oriented path as in \Cref{pathWeights}, the edge ideal of a path with trivial weights, and the ideal generated by a variable where the supports of all three ideals are disjoint. We can then use \Cref{rem:Ix}, \Cref{pathWeights} and \Cref{rem:pc} to obtain
  \begin{eqnarray*}
        \reg(R/(I, x_{p_k})) 
            &=& 
                \sum_{i = 1}^{k-1} w_{p_i}+ \BL \frac{p_1+1}{3} \BR -(k-1) + \BL \frac{n-p_k+1}{3} \BR\\
            &\leq& 
                \sum_{i = 1}^{k-1} w_{p_i}+ \BL \frac{p_1+1}{3} \BR -k + 1 + \BL \frac{n-p_k}{3} \BR+1\\
            &\leq& 
                \sum_{i = 1}^{k-1} w_{p_i}+ \BL \frac{p_1+1}{3} \BR -k + \BL \frac{n-p_k}{3} \BR + w_{p_k}\\
            &=& 
                \reg(R/(I, x_{p_k+1})).
    \end{eqnarray*}
    
Therefore $\displaystyle \reg(\P) = \reg(R/(I, x_{p_k+1})) = \sum_{i = 1}^{k} w_{p_i}+ \BL \frac{p_1+1}{3} \BR -k + \BL \frac{n-p_k}{3} \BR$ as desired.

\textit{Case 2:}  Now suppose that $1<\ell<k$. Then this means that $p_2 = p_1+1$ and therefore every maximal element of $\mfS$ has $p_1$ as it's first element.  In this case
    \[
        (I(\P),x_{p_\ell})= (x_1x_2^{w_2}, \ldots, x_{p_\ell-2}x_{p_\ell-1}^{w_{p_\ell-1}})+( x_{p_\ell})+(x_{p_\ell+1}x_{p_\ell+2}^{w_{p_\ell+2}}, \ldots, x_{n-1}x_n^{w_n})
    \]
is the sum of an ideal of the form of \Cref{pathWeights} and the edge ideal of a naturally oriented path with fewer than $k$ non-trivial weights.  By \Cref{pathWeights} and the induction hypothesis we have
\small{\begin{eqnarray*} 
\reg (R/(I(\P),x_{p_\ell}))	    &=&	
	        \sum_{i=1}^{\ell-1}w_{p_i}+{\BL{\frac{p_1+1}{3}}\BR}-(\ell-1)+ \max_{\substack{\textbf{q} \in \mfS \\ q_1 \geq p_\ell+2}} \left\{ \sum (q_1,\ldots, q_t) +{\BL{\frac{n-q_t}{3}}\BR}+{\BL{\frac{q_1-p_l+1}{3}}\BR}\right\}\\
	    &=&	 
	        \max_{\substack{\textbf{q} \in \mfS \\ q_1 \geq p_\ell+2}} \left\{ \sum (p_1,\ldots, p_\ell-1,q_1,\ldots, q_t) +{\BL{\frac{n-q_t}{3}}\BR}+{\BL{\frac{p_1+1}{3}}\BR} \right\}\\
		&=&	 
		    \max_{\substack{\textbf{q} \in \mfS \\ q_1 \geq p_\ell+2}} \left\{ \sum (p_1,\ldots, p_{\ell-1},q_1,\ldots, q_t) +{\BL{\frac{n-q_t}{3}}\BR}+{\BL{\frac{p_1+1}{3}}\BR} \right\},
\end{eqnarray*}}
where the equalities use the fact that $p_{\ell-1}=p_{\ell}-1$
Similarly, 
    \[
        (I(\P),x_{p_\ell+1}) = (x_1x_2^{w_2}, \ldots, x_{p_\ell-1}x_{p_\ell}^{w_{p_\ell}}) + (x_{p_\ell+1}) + (x_{p_\ell+2}x_{p_\ell+3}^{w_{p_\ell+3}}, \ldots, x_{n-1}x_n^{w_n})
    \]
is also the sum of an ideal of the type in \Cref{pathWeights} and the edge ideal of a naturally oriented path with fewer than $k$ non-trivial weights.  Thus, again by \Cref{pathWeights} and the induction hypothesis,
\small{\begin{eqnarray*}		
\reg (R/(I(\P),x_{p_l+1}))&=& 
	        \sum_{i=1}^{l}w_{p_i}+{\BL{\frac{p_1+1}{3}}\BR}-l+ \max_{\substack{\textbf{q} \in \mfS \\ q_1 \geq p_\ell+3}} \left\{ \sum (q_1,\ldots, q_t) +{\BL{\frac{n-q_t}{3}}\BR}+{\BL{\frac{q_1-p_l-1+1}{3}}\BR} \right\}\\
	   & =& 	
	        \sum_{i=1}^{l}w_{p_i}+{\BL{\frac{p_1+1}{3}}\BR}-l+ \max_{\substack{\textbf{q} \in \mfS \\ q_1 \geq p_\ell+3}} \left\{ \sum (q_1,\ldots, q_t) +{\BL{\frac{n-q_t}{3}}\BR}+{\BL{\frac{q_1-p_l}{3}}\BR} \right\}\\
		&=&  
		    \max_{\substack{\textbf{q} \in \mfS \\ q_1 \geq p_\ell+3}} \left\{ \sum (p_1,\ldots, p_l,q_1,\ldots, q_t) +{\BL{\frac{n-q_t}{3}}\BR}+{\BL{\frac{p_1+1}{3}}\BR} \right\}.
\end{eqnarray*}}

Observe that for any ${\bf q} \in \mfS,$ we must have
$q_i= p_i$ for each $1\leq i \leq \ell-1$ and either $q_\ell=p_\ell$ (which implies that $q_{\ell+1} \geq p_\ell+3$ since our choice of $\ell$ implies that $q_{\ell+1}\neq p_\ell+1$ and the definition of $\mfS$ then implies that $q_{\ell+1}\neq p_\ell+2$) or  $q_{\ell} \geq  p_\ell+2$. Hence, the maximal elements of the set $\mfS$ are contained in the set
    \[ 
         \left\{ ( p_1,\ldots, p_\ell,q_1,\ldots, q_t) ~:~\textbf{q} \in \mfS,  \\ q_1 \geq p_\ell+3 \right\}  \cup \left\{(p_1,\ldots, p_{\ell-1},q_1,\ldots, q_t) ~:~\textbf{q} \in \mfS,  \\ q_1 \geq  p_\ell+2 \right\} 
    \]
and we deduce that
    \[
        \max \{\reg (R/(I(\P),x_{p_\ell})), \reg (R/(I(\P),x_{p_\ell+1})) \}= \max_{\substack{\textbf{q} \in \mfS}} \left\{\sum(q_1,\ldots q_t) +{\BL{\frac{n-q_t}{3}}\BR}+{\BL{\frac{p_1+1}{3}}\BR}  \right\}.
    \]

\textit{Case 3:}  Suppose finally that $\ell=1.$ Then $p_2 \geq p_1+2$ by our assumptions. Then
    \[
        (I(\P),x_{p_1}) = (x_1x_2, \ldots, x_{p_1-2}x_{p_1-1}, x_{p_1},x_{p_1+1}x_{p_1+2}^{w_{p_1+2}}, \ldots, x_{n-1}x_n^{w_n})
    \]
and by \Cref{lem:disjoint} and the induction hypothesis
    \begin{eqnarray*}
        \reg (R/(I(\P),x_{p_1}))
            &=&
                {\BL{\frac{p_1-1+1}{3}}\BR} + \max_{\substack{\textbf{q} \in \mfS \\ q_1 \geq p_1+2}} \left\{ \sum (q_1,\ldots, q_t) +{\BL{\frac{n-q_t}{3}}\BR}+{\BL{\frac{q_1-p_1+1}{3}}\BR} \right\}\\
	        &=&
	            {\BL{\frac{p_1}{3}}\BR} + \max_{\substack{\textbf{q} \in \mfS \\ q_1 \geq p_1+2}} \left\{ \sum (q_1,\ldots, q_t) +{\BL{\frac{n-q_t}{3}}\BR}+{\BL{\frac{q_1-p_1+1}{3}}\BR}  \right\}.
    \end{eqnarray*}

Similarly, for the ideal 
    \[
        (I(\P),x_{p_1+1})= (x_1x_2, \ldots, x_{p_1-2}x_{p_1-1}, x_{p_1-1}x_{p_1}^{w_{p_1}}, x_{p_1+1},x_{p_1+2}x_{p_1+3}^{w_{p_1+3}}, \ldots, x_{n-1}x_n^{w_n})
    \]
again by \Cref{lem:disjoint}, \Cref{prop:path1}, and the induction hypothesis, we have
    \begin{eqnarray*}
    	\reg (R/(I(\P),x_{p_1+1}))
    	    &=& 	
    	        w_{p_1}+{\BL{\frac{p_1+1}{3}}\BR}-1+ 
    	        \max_{\substack{\textbf{q} \in \mfS \\ q_1 \geq  p_1+3}} \left\{ \sum (q_1,\ldots, q_t) +{\BL{\frac{n-q_t}{3}}\BR}+{\BL{\frac{q_1-p_1-1+1}{3}}\BR}  \right\}\\
	        &=& 
	            \max_{\substack{\textbf{q} \in \mfS \\ q_1 \geq  p_1+3}} \left\{ \sum (p_1, q_1,\ldots, q_t) +{\BL{\frac{n-q_t}{3}}\BR}+{\BL{\frac{p_1+1}{3}}\BR} \right\}.
    \end{eqnarray*}
 Since $p_2 \geq p_1+2,$ observe that  any maximal element ${\bf q}$ of $\mfS$ with $q_1=p_1$ must have $q_2 \geq p_1+3$. Thus, we get 
  \[
        \reg (R/(I(\P),x_{p_1+1}))=\max_{\substack{\textbf{q} \in \mfS \\ q_1 = p_1}} \left\{ \sum (q_1,\ldots, q_t) +{\BL{\frac{n-q_t}{3}}\BR}+{\BL{\frac{q_1+1}{3}}\BR} \right\}.
    \]
   \sloppy  Our goal is to show that the maximum of the above expressions obtained for $\reg (R/(I(\P),x_{p_1+1}))$ and $\reg (R/(I(\P),x_{p_1}))$ is of the desired form. We make the following useful observations to achieve this goal. \\\\
    If $p_2= p_1+2,$ for any $\q= (q_1, \ldots, q_t) \in \mfS$ where $q_1=p_2,$ observe that
    \small{\begin{eqnarray*}
        {\BL{\frac{p_1}{3}}\BR} + \sum \q  +{\BL{\frac{n-q_t}{3}}\BR}+{\BL{\frac{q_1-p_1+1}{3}}\BR}
       &=& {\BL{\frac{p_2-2}{3}}\BR} + \sum (p_2,q_2,\ldots, q_t) +{\BL{\frac{n-q_t}{3}}\BR}+{\BL{\frac{p_2-p_1+1}{3}}\BR}\\
            &=& 
                {\BL{\frac{p_2-2}{3}}\BR} + \sum (p_2,q_2,\ldots, q_t) +{\BL{\frac{n-q_t}{3}}\BR}+1\\
            &=& 
                \sum (p_2, q_2,\ldots, q_t) +{\BL{\frac{n-q_t}{3}}\BR}+ {\BL{\frac{p_2+1}{3}}\BR}.
    \end{eqnarray*}}
If $p_2> p_1+2,$ then for any $(q_1,\ldots, q_t) \in \mfS$ where $q_1 > p_1+2,$ we have 
\begin{eqnarray*}
      {\BL{\frac{p_1}{3}}\BR} + \sum \q  
      +{\BL{\frac{n-q_t}{3}}\BR}+{\BL{\frac{q_1-p_1+1}{3}}\BR} 
           & \leq&  
                w_{p_1}+ {\BL{\frac{p_1+1}{3}}\BR}-1+ \sum \q
            +{\BL{\frac{n-q_t}{3}}\BR}
            +{\BL{\frac{q_1-p_1}{3}}\BR}\\ 
            &=& 
                \sum (p_1, q_1,\ldots, q_t)
                +{\BL{\frac{n-q_t}{3}}\BR}
                +{\BL{\frac{p_1+1}{3}}\BR}
    \end{eqnarray*}
    since $w_{p_1}\geq 2.$\\
    Therefore, by putting all of the observations together, we see that
\small{\begin{eqnarray*}
    \reg(\P)  &=& \max\left\{\reg(R/(I(\P), x_{p_1+1})),  \reg(R/(I(\P), x_{p_1}))\right\}\\
        &=&\max\left\{ \max_{\substack{\textbf{q} \in \mfS \\ q_1 = p_1}} \left\{ \sum \q  +{\BL{\frac{n-q_t}{3}}\BR}+{\BL{\frac{q_1+1}{3}}\BR} \right\}, \max_{\substack{\textbf{q} \in \mfS \\ q_1 \geq p_2}} \left\{\BL\frac{p_1}{3}\BR +\sum \textbf{q} + \BL \frac{n-q_t}{3}\BR + \BL\frac{q_1 - p_1+1}{3}\BR \right\}\right\} \\
         &=&\max\left\{ \max_{\substack{\textbf{q} \in \mfS \\ q_1 = p_1}} \left\{ \sum \q  +{\BL{\frac{n-q_t}{3}}\BR}+{\BL{\frac{q_1+1}{3}}\BR} \right\}, \max_{\substack{\textbf{q} \in \mfS \\ q_1 = p_2}} \left\{ \sum \q  +{\BL{\frac{n-q_t}{3}}\BR}+{\BL{\frac{q_1+1}{3}}\BR} \right\} \right\} \\
        &=& \max_{\substack{\textbf{q} \in \mfS \\ q_1 = p_1, p_2}} \left\{ \sum \textbf{q} + \BL\frac{n-q_t}{3}\BR + \BL \frac{p_1+1}{3}\BR \right\}
\end{eqnarray*} 
where $q_1=p_2$ occurs only when $p_2=p_1+2.$}
	\end{proof}

\begin{corollary}\label{cor:not2apart}
Let $\P$ be a  weighted naturally oriented path on vertices $x_1,\ldots, x_n$  with the weight sequence $(p_1,\ldots, p_k).$  If $p_{i+1}-p_i \neq 2$ for each $i \in \left\{1, \ldots, k-1\right\},$ then  
    \[
        \reg (\P)= \sum_{i=1}^k w_{p_i} + \sum_{i=1}^{k-1} {\BL{\frac{p_{i+1}-p_i}{3}}\BR}+\BL{\frac{n-p_k}{3}\BR} +\BL{\frac{p_1+1}{3}\BR}-k
    \]
where $ k \ge 1.$
\end{corollary}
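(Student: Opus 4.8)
The plan is to deduce this immediately from \Cref{path:gen} together with the monotonicity statement in \Cref{maximal}. The hypothesis that $p_{i+1}-p_i\neq 2$ for every $i\in\{1,\dots,k-1\}$ says precisely that the full weight sequence $(p_1,\dots,p_k)$ satisfies the defining condition of the set $\mfS$ from \Cref{not}, and hence $(p_1,\dots,p_k)\in\mfS$. Since by definition every element of $\mfS$ is a subsequence of $(p_1,\dots,p_k)$, the full weight sequence is the unique maximal element of $\mfS$ with respect to inclusion.

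Next I would invoke the first inequality of \Cref{maximal}: for any ${\bf q}\subseteq{\bf q}'$ in $\mfS$ one has $\sum{\bf q}+\BL\frac{n-q_t}{3}\BR+\BL\frac{q_1+1}{3}\BR \le \sum{\bf q}'+\BL\frac{n-q'_{t'}}{3}\BR+\BL\frac{q'_1+1}{3}\BR$. Applying this with ${\bf q}'=(p_1,\dots,p_k)$ shows that the maximum appearing in the formula of \Cref{path:gen} is attained at the full weight sequence, so that $\reg(\P)=\sum(p_1,\dots,p_k)+\BL\frac{n-p_k}{3}\BR+\BL\frac{p_1+1}{3}\BR$.

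Finally I would substitute the definition of the weight-position sum from \Cref{not2}, namely $\sum(p_1,\dots,p_k)=\sum_{i=1}^{k}w_{p_i}+\sum_{i=1}^{k-1}\BL\frac{p_{i+1}-p_i}{3}\BR-k$, to recover the stated closed form; the degenerate case $k=1$ is consistent since the empty sum over $i\in\{1,\dots,k-1\}$ vanishes and the formula collapses to \Cref{prop:path1}. There is essentially no real obstacle here: the content is entirely in \Cref{path:gen} and \Cref{maximal}, and the only points requiring a line of care are the verification that $(p_1,\dots,p_k)$ lies in $\mfS$ under the hypothesis and the bookkeeping of the floor-function conventions in the degenerate $k=1$ case.
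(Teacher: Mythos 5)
Your argument is correct and is essentially the paper's own proof: the paper likewise observes that the hypothesis $p_{i+1}-p_i\neq 2$ makes $(p_1,\ldots,p_k)$ the containment-maximal element of $\mfS$ and then concludes directly from \Cref{path:gen}, with \Cref{maximal} supplying the monotonicity you spell out explicitly. Your additional remarks on the $k=1$ case and the expansion of the weight-position sum are just the same bookkeeping the paper leaves implicit.
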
	

\begin{proof}
    Since $p_{i+1}-p_i \neq 2$ for each $i \in \left\{1, \ldots, k-1\right\},$ the maximal element under containment of sets in $\mfS$ is $(p_1,\ldots, p_k) $ and the statement holds by \Cref{path:gen}.
\end{proof}

\begin{example}
This example serves to illustrate that the changing the positions of the nontrivial weights can change the regularity of the graph. Let $\P_8^3, \P_8^4$ be  the weighted oriented paths pictured below in \Cref{fig:PathExample3} where the two graphs each have three vertices with the same nontrivial weight values but in different positions in the graphs.
\begin{figure}[ht]
    \centering
    \includegraphics{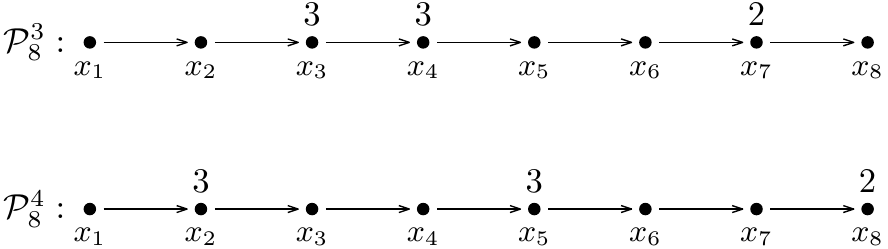}
    \caption{Two paths with the same total weights but different weight sequences}
    \label{fig:PathExample3}
\end{figure}
By using the formula in \Cref{cor:not2apart}, we can compute 
    $\reg (\P_8^3)=7$ while $\reg(\P_8^4)=8$. Here the added distance between the nontrivial weights in $\P_8^4$ results in the regularity going up by one. 
    
\end{example}

The following lemma is a consequence of the previous theorem and will be used in \Cref{cycles}. In this lemma, our objects are ideals obtained by taking the sum of an edge ideal of a  weighted naturally oriented path on $n$ vertices $x_1,\ldots, x_n$ and monomial ideal $(x_1^{w_1})$ where $w_1>1.$ Even though we introduce the term weight sequence for weighted oriented paths, we can adopt this notion for these special ideals as they can be seen as an extension of edge ideals of weighted oriented paths. Let  $I=(x_1^{w_1})+I(\P)$ be a monomial ideal where $w_1 >1$ and $\P$ is a  weighted naturally oriented path. We call $(1=p_1,p_2,\ldots, p_k)$ the weight sequence of the ideal $I$ where $(p_2, \ldots , p_k)$ is the weight sequence of $\P.$ By abusing notation, we can define the set $\mfS$ in a similar way for these type of ideals. In the next lemma we again take $\textbf{q}=(q_1,\ldots, q_t)$ to be a subsequence of $(1=p_1,p_2,\ldots, p_k)$ in $\mfS.$

\begin{lemma}\label{bigclaim3}
Let $L= (x_1^{w_{1}}, x_{1}x_{2}^{w_{2}}, \ldots, x_{n-1}x_n^{w_n})$ be an ideal in $R$  with the weight sequence $(1=p_1,\ldots, p_k).$ Then, we have 
$$ \reg (R/L)= \max_{q_1 = 1, 3} \left\{ \sum(q_1,\ldots, q_t)+\BL{\frac{n-q_t}{3}\BR}+\BL{\frac{q_1}{3}\BR}\right\}.$$
\end{lemma}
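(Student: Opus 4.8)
The plan is to realize $L$ as a colon ideal of the edge ideal of a genuine weighted naturally oriented path and then transport \Cref{path:gen}. Let $\widehat{\P}$ be the weighted naturally oriented path on the $n+2$ vertices $x_{-1},x_0,x_1,\dots,x_n$ (edges $(x_{-1},x_0),(x_0,x_1),(x_1,x_2),\dots,(x_{n-1},x_n)$), where $x_{-1}$ and $x_0$ carry trivial weight and every other vertex keeps its weight from $L$; viewed as a path on $n+2$ vertices, $\widehat{\P}$ then has weight sequence $(3,p_2+2,\dots,p_k+2)$. A direct monomial computation gives
\[
    I(\widehat{\P}):x_0=(x_{-1})+L,\qquad (I(\widehat{\P}),x_0)=(x_0)+I(\P),
\]
where $\P$ is the weighted naturally oriented path on $x_1,\dots,x_n$, i.e.\ $L$ with the pure power $x_1^{w_1}$ deleted. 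Since $x_{-1}\notin\supp(L)$ and $x_0\notin\supp(I(\P))$, \Cref{lem:disjoint} gives $\reg(R/(I(\widehat{\P}):x_0))=\reg(R/L)$ and $\reg(R/(I(\widehat{\P}),x_0))=\reg(\P)$. Applying \Cref{oneVar} to $I(\widehat{\P})$ and the variable $x_0$ then yields
\[
    \reg(\widehat{\P})\in\{\reg(R/L)+1,\ \reg(\P)\}.
\]

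Next I would evaluate $\reg(\widehat{\P})$ via \Cref{path:gen} and match it to the asserted formula. Shifting every index down by $2$ is a bijection between $\mfS$ for $\widehat{\P}$ and $\mfS$ for $L$ (differences of consecutive entries are preserved, so the ``difference not equal to $2$'' condition is preserved), and the weight-position sum $\sum\q$ is unchanged under this shift (the floor differences of consecutive entries do not move, and the weight attached to the first position is $w_1$ on both sides). A routine floor identity, $\BL\frac{(n+2)-a_s}{3}\BR=\BL\frac{n-(a_s-2)}{3}\BR$ and $\BL\frac{a_1+1}{3}\BR=\BL\frac{a_1-2}{3}\BR+1$, then shows that the quantity maximized in \Cref{path:gen} for $\widehat{\P}$ equals, on each corresponding subsequence, the quantity $\sum(q_1,\dots,q_t)+\BL\frac{n-q_t}{3}\BR+\BL\frac{q_1}{3}\BR$ appearing in the statement, plus $1$. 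The admissible first indices $3$ and $3+2=5$ for $\widehat{\P}$ correspond to $1$ and $3$ for $L$; by \Cref{maximal} the maximum of \Cref{path:gen} for $\widehat{\P}$ is attained at such an index, and since every element of $\mfS$ extends to a maximal one starting at $1$ or $3$, the right-hand side $N$ of the lemma equals the maximum of $\sum(q_1,\dots,q_t)+\BL\frac{n-q_t}{3}\BR+\BL\frac{q_1}{3}\BR$ over all of $\mfS$. Hence $\reg(\widehat{\P})=N+1$ (the degenerate case $k=1$ being handled identically via \Cref{prop:path1}).

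It remains to exclude the alternative $\reg(\widehat{\P})=\reg(\P)$. Since $w_1>1$, apply the Betti splitting of \Cref{cor:bettieq} to $\widehat{\P}$ at the vertex $x_1$: there $K$ is generated by $\{x_{-1}x_0\}$ together with $\G(I(\P))$, and $(x_{-1}x_0)$ and $I(\P)$ have disjoint supports, so $\reg(R/K)=\reg(R/(x_{-1}x_0))+\reg(\P)=1+\reg(\P)$ by \Cref{lem:disjoint}. Thus $\reg(\widehat{\P})\ge\reg(R/K)=\reg(\P)+1>\reg(\P)$, so the second option above cannot occur. Combining $\reg(\widehat{\P})=\reg(R/L)+1$ with $\reg(\widehat{\P})=N+1$ gives $\reg(R/L)=N$, which is the claim.

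The main obstacle is the middle step: setting up the shift-by-$2$ correspondence precisely, verifying the floor-function identity that produces the extra $+1$, and confirming that the admissible first indices and the notions of ``maximal subsequence'' transform as stated, so that the maximum of \Cref{path:gen} for $\widehat{\P}$ genuinely matches the maximum defining $N$. Everything else reduces to an immediate monomial manipulation or a direct appeal to \Cref{oneVar}, \Cref{lem:disjoint}, \Cref{cor:bettieq} and \Cref{path:gen}.
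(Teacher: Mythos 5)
Your proof is correct, but it follows a genuinely different route from the paper's. The paper depolarizes: it notes that $L$ has the same polarization (hence the same regularity) as $L'=(x_0x_1^{w_1-1},x_1x_2^{w_2},\dots,x_{n-1}x_n^{w_n})$, recognizes $L'$ as the edge ideal of a weighted naturally oriented path on $n+1$ vertices, and applies \Cref{path:gen} directly; the price is a case split according to whether $w_1=2$ (where the first weight becomes trivial and the weight sequence drops to $(p_2+1,\dots,p_k+1)$) or $w_1\neq 2$, each with its own floor-function bookkeeping. You instead realize $L$, up to the harmless variable $x_{-1}$, as the colon ideal $(I(\widehat{\P}):x_0)$ of a path $\widehat{\P}$ on $n+2$ vertices with two prepended trivial-weight vertices, invoke the dichotomy of \Cref{oneVar}, evaluate $\reg(\widehat{\P})$ once via \Cref{path:gen} and \Cref{maximal} (the shift by $2$ turns $\lfloor (q_1+2+1)/3\rfloor$ into $\lfloor q_1/3\rfloor+1$ uniformly, with no case split on $w_1$), and then discard the branch $\reg(\widehat{\P})=\reg(\P)$ by the Betti splitting of \Cref{cor:bettieq} at the vertex of weight $w_1$, which gives $\reg(\widehat{\P})\ge\reg(R/K)=1+\reg(\P)$ by \Cref{lem:disjoint}. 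What your route buys is the elimination of the $w_1=2$ case analysis and a transparent source of the ``$+1$''; what it costs is the extra strictness argument needed to exclude one branch of \Cref{oneVar}, plus the implicit use of $n\ge 2$ so that $I(\P)\neq 0$ when \Cref{lem:disjoint} is applied (harmless, since the lemma's listed generators already presuppose this). Both arguments ultimately rest on the same engine, \Cref{path:gen}, and are of comparable length, so this stands as a valid alternative proof rather than a variant of the paper's.
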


\begin{proof} 
Let $L' = (x_0x_1^{w_1-1}, x_1x_2^{w_2}, \dots, x_{n-1}x_n^{w_n})$.  Note that $L$ and $L'$ have the same polarization (up to a relabeling vertices) and thus have the same Betti numbers.  In particular $\reg(R/L) = \reg(R/L')$ and we can use \Cref{path:gen} to calculate the regularity of $R/L'$.  The ideal $L'$ is the edge ideal of a  weighted naturally oriented path on $n+1$ vertices with weight sequence $p_1+1, \dots, p_k+1$ if $w_1 \neq 2$, and with weight sequence $p_2+1, \dots, p_k+1$ if $w_1 = 2$.  We therefore have two cases to consider.  

\textit{Case 1:}  Suppose first that $w_1 \neq 2$.  Then by \Cref{path:gen} we have
\begin{eqnarray*}
\reg(R/L') 
    &=& 
        \max\left\{
            \begin{array}{ll}
                \displaystyle
                \max_{q_1 =3}\left\{\sum q + \BL\frac{n-q_t}{3}\BR+\BL\frac{q_1+2}{3}\BR\right\},\\
                \displaystyle
                \max_{q_1 = 1}\left\{ w_1-1 +\sum_{i = 2}^t w_{q_i}+\sum_{i = 1}^{t-1}\BL \frac{q_{i+1}-q_i}{3}\BR + \BL\frac{n-q_t}{3}\BR+\BL\frac{q_1+2}{3}\BR-t\right\}
            \end{array}
        \right\}\\
    &=& 
        \max\left\{
            \begin{array}{ll}
                \displaystyle
                \max_{q_1 =3}\left\{\sum q + \BL\frac{n-q_t}{3}\BR+\BL\frac{q_1+2}{3}\BR\right\},\\
                \displaystyle
                \max_{q_1 = 1}\left\{ \sum_{i = 1}^t w_{q_i}+\sum_{i = 1}^{t-1}\BL \frac{q_{i+1}-q_i}{3}\BR + \BL\frac{n-q_t}{3}\BR+\BL\frac{q_1-1}{3}\BR-t\right\}
            \end{array}
        \right\}\\
    &=& 
        \max\left\{\max_{q_1 =3}\left\{\sum q + \BL\frac{n-q_t}{3}\BR+\BL\frac{q_1+2}{3}\BR\right\} ,\max_{q_1 = 1}\left\{ \sum q + \BL\frac{n-q_t}{3}\BR+\BL\frac{q_1-1}{3}\BR\right\}\right\}\\
\end{eqnarray*}
Note that if $q_1 = 1$ then $\BL \frac{q_1 - 1}{3} \BR = \BL \frac{q_1}{3}\BR$ and if $q_1 = 3$ then $\BL \frac{q_1+2}{3} \BR = \BL \frac{q_1}{3}\BR$.  So 
\[
    \reg(R/L') = \max_{q_1 = 1, 3}\left\{\sum q + \BL\frac{n-q_t}{3}\BR + \BL \frac{q_1}{3}\BR\right\}.
\]

\textit{Case 2:} \sloppy  Suppose now that $w_1 = 2$.  Then $L'$ is a path of length $n+1$ with weighted sequence $p_2+1, \dots, p_k+1$.  By \Cref{path:gen} 
\[
    \reg(R/L') = \max_{q_1 = p_2, p_2+2}\left \{ \sum q + \BL \frac{n-q_t}{3}\BR + \BL \frac{q_1+2}{3}\BR\right\}
\]
If $(q_1, \dots, q_t) \in S$ with $q_1 \neq  3$ then $(1, q_1, \dots, q_t)$ is in $S$ also.  Recalling that in this case $w_1 = 2$, we have
\begin{eqnarray*}
     &&\sum_{i = 1}^t w_{q_i}+ \sum_{i = 1}^{t-1} \BL\frac{q_{i+1}-q_1}{3}\BR -t + \BL \frac{n-q_t}{3} \BR + \BL \frac{q_1+2}{3}\BR \\
    &=& \sum_{i = 1}^t w_{q_i}+ \sum_{i = 1}^{t-1} \BL\frac{q_{i+1}-q_1}{3}\BR -t + \BL \frac{n-q_t}{3} \BR + \BL \frac{q_1+2}{3}\BR + w_1-2\\
    &=& w_1+ \sum_{i = 1}^t w_{q_i}+ \sum_{i = 1}^{t-1} \BL\frac{q_{i+1}-q_1}{3}\BR -t + \BL \frac{n-q_t}{3} \BR + \BL \frac{q_1-1}{3}\BR -1\\
    &=& w_1+ \sum_{i = 1}^t w_{q_i}+ \sum_{i = 1}^{t-1} \BL\frac{q_{i+1}-q_1}{3}\BR + \BL \frac{q_1-1}{3}\BR -(t+1) + \BL \frac{n-q_t}{3} \BR + \BL\frac{1}{3}\BR.\\
\end{eqnarray*}
Further, if $q_1 = 3$ then, as before, $\BL \frac{q_1+2}{3}\BR = \BL \frac{q_1}{3}\BR$.  Therefore we have
\begin{eqnarray*}
    \reg(R/L') &=&   \max_{q_1 = p_2, p_2+2}\left \{ \sum q + \BL \frac{n-q_t}{3}\BR + \BL \frac{q_1+2}{3}\BR\right\}\\
    &=& \max_{q_1 = 1, 3}\left \{ \sum q + \BL \frac{n-q_t}{3}\BR + \BL \frac{q_1}{3}\BR\right\}.
\end{eqnarray*}

\end{proof}

\section{Weighted Oriented Cycles}\label{cycles}

Our main theorem of this section calculates the regularity of the edge ideal of a weighted naturally oriented cycle and presents a formula similar to the one obtained for weighted naturally oriented paths in \Cref{path:gen}. We begin with an analogous setup as in the weighted naturally oriented path case.

Let $\C_n$ be a weighted naturally oriented cycle on the vertices $x_1,\ldots, x_n$ (in order) with directed edges $(x_1, x_2),$ $(x_2, x_3),$ $\ldots,(x_{n-1}, x_n), (x_n, x_1)$ such that vertex $x_i$ has weight $w_i$ for each $i \in \{ 1, \ldots, n\}.$  
If all the weights of $\C_n$  are trivial, i.e. $w_i=1$ for each $i \in \{1,\ldots, n\},$ then studying $I(\C_n)$ is the same as studying the edge ideal of the unoriented cycle $I(C_n)$ for which the regularity is known, as discussed in \Cref{rem:pc}. 
Thus we shall assume that $w_i\geq 2$ for some $i$.

\begin{definition}
	Let $1\leq p_1< p_2 \ldots <p_k\le n$ be the positions of non-trivial weights in $\C_n.$  Similar to the weight-sequence definition introduced in \Cref{p_i}, we call  $(p_1,\ldots, p_k)$ the \emph{weight  sequence} of $\C_n.$   
\end{definition}

In contrast to the path case, one has more freedom when it comes to determining the positions of non-trivial weights of a weighted oriented cycle as one can reorder the vertices of the cycle without changing the structure of the graph. 
Thus, without loss of generality, when the cycle contains at least one trivial weight, we assume that $p_1=1$ for the remainder of the paper. 

\begin{notation}\label{notcycle}
Similar to the path case, our formula for the regularity of the cycle will be given in terms of subsequences of the weight sequence in which no two consecutive entries are distance two apart. Due to the structure of the cycle we modify the set we previously called $\mfS$ as follows. 
    \[
		\dS= \{ (q_1,\ldots, q_t) \subseteq (p_1,\ldots, p_k) ~:~   q_{i+1}-q_i \neq 2 \text{ for each } i \in \{1, \ldots, t-1\} , \text{ and }q_t-q_1\neq n-2\}.
	\]

We are again using the subset symbol to denote the subsequence relation.

The formula for the regularity of the cycle will be quite similar to that of the path and will include the weight-position sum defined in \Cref{not2} where instead we take $\q\in\dS$:
    \begin{equation}\label{weightsum}
        \sum\q=\sum(q_1,\dots,q_t):=\sum_{i=1}^tw_{q_i}+\sum_{i=1}^{t-1}\BL{\frac{q_{i+1}-q_i}{3}}\BR-t.
    \end{equation}
    \end{notation}

The formula for the regularity of the weighted oriented cycle will differ from the weighted oriented path as one needs to take into account the difference between the positions of the first and last non-trivial weights for weighted oriented cycles.

\begin{example}\label{cycleExamples}
  Let $\C_{10}^1$ and $\C_{10}^2$ be weighted oriented cycles as shown in \Cref{fig:CycleExample}, with the weight sequences $(1,3,4,6,7,9)$ and $(1,2,4,6,7,9)$, respectively. Denote by $\dS_1$ and $\dS_2$ their corresponding sets as defined in \Cref{notcycle}.
  
  \begin{figure}[h!]
      \centering
      \includegraphics{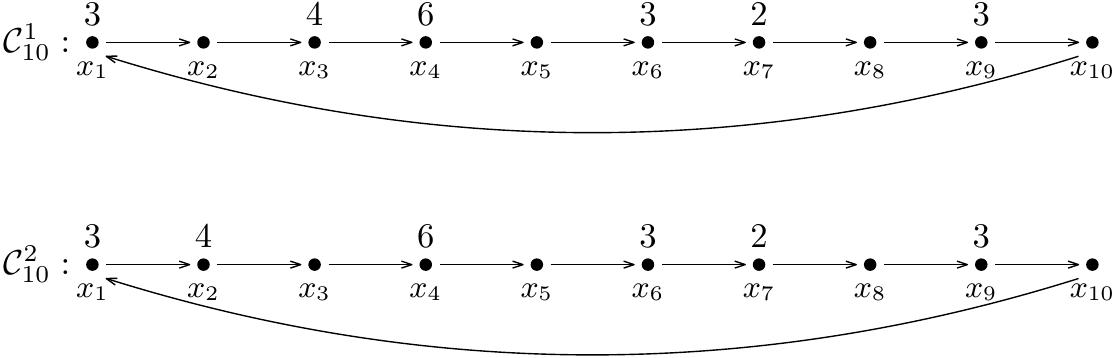}
      \caption{Two naturally oriented weighted cycles on $10$ vertices}
      \label{fig:CycleExample}
  \end{figure}
  
  Then the maximal elements with respect to inclusion of $\C_{10}^1$ in $\dS_1$ are  
    \[ 
        \{(1,4,7), (1,6,7), (3,4,7), (3,4,9), (3,6,7), (3,6,9)\}.
    \] 
    Note that each of the maximal elements satisfy either $q_1=1,$ or $q_1=3$. On the other hand, the maximal elements of $\C_{10}^2$ in $\dS_2$ are 
    \[ 
        \{(1,2,6,7), (1,4,7), (2,6,9), (4,9) \}.
    \]
    Here we see that each of the maximal elements satisfies either $q_1=1$ or $q_t=9$. Furthermore, we see how different $\dS_1$ is from $\dS_2$ although their weight sequences differ by only one entry. 
\end{example}

\begin{theorem}\label{cycle:gen}
\sloppy 	Let $\C_n$ be a weighted naturally oriented cycle on $n$-vertices $x_1,\ldots, x_n$  with the weight sequence $(p_1,\ldots, p_k).$ Then
		\[
			\reg (\C_n) = \max_{q\in\dS} \left\{ \sum(q_1,\ldots, q_t)+{\BL{\frac{n+q_1-q_t}{3}}\BR} \right\}.
		\]
\end{theorem}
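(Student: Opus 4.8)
The plan is to pick a non-trivial weight, say at position $p_1 = 1$ (which we may assume when $\C_n$ has at least one trivial weight), and use a Betti splitting together with the short exact sequence machinery of \Cref{reg} and \Cref{oneVar} to reduce the cycle to paths, for which \Cref{path:gen} and \Cref{bigclaim3} already give closed formulas. Concretely, since there is at most one edge oriented into each vertex, choose a vertex $x_j$ with $w_j > 1$ whose successor $x_{j+1}$ has trivial weight (if no such vertex exists, every non-source weight is non-trivial and the result follows from \Cref{trivial}, after checking the formula collapses correctly; if all weights are trivial, use \Cref{rem:pc}). Then I would split off the variable $x_{j+1}$ (or $x_j$) and analyze $(I(\C_n), x_{j+1})$, $(I(\C_n), x_j)$, and $(I(\C_n) : x_j)$. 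Deleting one vertex from a cycle yields a path, so each of these quotients is, up to disjoint-support summands handled by \Cref{lem:disjoint} and \Cref{rem:Ix}, the quotient by an edge ideal of a weighted naturally oriented path, or by an ideal of the form $(x_1^{w_1}) + I(\P)$ covered by \Cref{bigclaim3}.

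The key steps, in order: (1) Reduce to the case where $\C_n$ has a non-trivial weight at some $p_a$ with $w_{p_a+1}=1$; otherwise dispatch via \Cref{trivial} or \Cref{rem:pc}. (2) Using \Cref{oneVar}, write $\reg(\C_n) \in \{\reg(R/(I:x_{p_a}))+1, \reg(R/(I,x_{p_a}))\}$, and using \Cref{reg} bound it between $\max\{\reg(R/(I,x_{p_a})), \reg(R/(I,x_{p_a+1}))\}$ and $\max\{\reg(R/(I,x_{p_a})), \reg(R/(I:x_{p_a}))+1\}$, exactly as in the proof of \Cref{path:gen}. (3) Show $\reg(R/(I,x_{p_a+1})) = \reg(R/(I:x_{p_a}))+1$: both ideals, after removing the vertex $x_{p_a+1}$ or passing to the colon, become sums of disjoint-support pieces differing only in one exponent on $x_{p_a}$, so \Cref{lem:disjoint}, \Cref{pathWeights}, and \Cref{bigclaim3} force the desired $+1$ relation. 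This pins down $\reg(\C_n) = \max\{\reg(R/(I,x_{p_a})), \reg(R/(I,x_{p_a+1}))\}$. (4) Compute each of these two regularities as a maximum over subsequences: $(I,x_{p_a})$ becomes the edge ideal of a path obtained by deleting $x_{p_a}$ (a path on vertices $x_{p_a+1},\dots,x_n,x_1,\dots,x_{p_a-1}$), so \Cref{path:gen} applies; $(I,x_{p_a+1})$ becomes a path with a surviving $x_{p_a}^{w_{p_a}}$ at one end, handled by \Cref{bigclaim3} applied to the appropriate reindexed path. (5) Finally, reconcile the combinatorics: translate the maximum over subsequences $\q$ of the path weight sequences, with their $\BL(q_1+1)/3\BR$ and $\BL(n'-q_t)/3\BR$ end-terms, into the single cyclic expression $\sum\q + \BL(n+q_1-q_t)/3\BR$ over $\q \in \dS$, using that the condition $q_t - q_1 \neq n-2$ in \Cref{notcycle} is precisely what prevents the two "wrap-around" end gaps from both being $\equiv 2 \pmod 3$ in a way that would over-count, and that the floor identity $\BL(n+q_1-q_t)/3\BR$ correctly combines the split end-contributions $\BL(q_1-p_a+1)/3\BR$ type terms.

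The main obstacle I expect is step (5): the bookkeeping that matches the two path-formula maxima against the cyclic formula. In the path case the first position $p_1$ and last position $p_k$ play asymmetric roles and their "boundary" contributions are $\BL(q_1+1)/3\BR$ and $\BL(n-q_t)/3\BR$; on the cycle these two gaps merge into the single wrap-around gap of length $n + q_1 - q_t$, and one must show that every maximal element of $\dS$ arises from a maximal subsequence of one of the two deleted-vertex paths, and conversely, with the floor terms matching exactly on the nose (not just up to $\pm 1$). The constraint $q_t - q_1 \neq n-2$ is the cycle analogue of $q_{i+1}-q_i\neq 2$ applied "around the back," and verifying it is exactly the right condition — neither too strong nor too weak — will require a careful case analysis on the residues of $p_a - q_t$ and $q_1 - p_a$ modulo $3$, parallel to the three cases ($\ell = k$, $1 < \ell < k$, $\ell = 1$) in the proof of \Cref{path:gen} but complicated by the cyclic symmetry allowing $p_a$ to be chosen at several positions. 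I would also need an analogue of \Cref{maximal} for $\dS$ to guarantee the maximum is attained at an inclusion-maximal element, which should follow by the same floor-function super-additivity estimate $\BL\frac{b-a}{3}\BR \le \BL\frac{b-r}{3}\BR + \BL\frac{r-a}{3}\BR + 1$ together with $w_r \ge 2$.
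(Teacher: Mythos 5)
Your step (3) is where the argument breaks, and it is not just unproven but false as stated. On a path, deleting or coloning at $x_{p_\ell}$ disconnects the graph, and the minimal choice of $p_\ell$ forces the component containing $x_{p_\ell}$ into the rigid form of \Cref{pathWeights}, whose regularity is literally linear in $w_{p_\ell}$; that is what gives $\reg (R/(I,x_{p_\ell+1}))=\reg (R/(I:x_{p_\ell}))+1$. On a cycle, removing one vertex leaves a single connected path wrapping all the way around, so $(I(\C_n),x_{j+1})$ and $(I(\C_n):x_j)$ are each a variable plus the edge ideal of one path on $n-1$ vertices ending at $x_j$, differing only in the weight of that terminal vertex; your hypothesis ($w_j>1$, $w_{j+1}=1$) puts no constraint on the vertices feeding into $x_j$, and the maximum in \Cref{path:gen} need not involve the terminal position at all, so dropping $w_j$ by one need not drop the regularity. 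Concretely, take $n=6$ with $w_1=5$, $w_3=2$ and all other weights trivial, and choose $j=3$. Then $(I,x_4)$ reduces to the path $(x_5x_6,\,x_6x_1^{5},\,x_1x_2,\,x_2x_3^{2})$, whose weight positions $3,5$ are two apart, and \Cref{path:gen} gives $\reg(R/(I,x_4))=5$; while $(I:x_3)$ gives the same path with $x_3$ made trivial, so $\reg(R/(I:x_3))+1=6$; also $\reg(R/(I,x_3))=5$. Your sandwich reads $5\le\reg(\C_6)\le 6$ and does not close, and the value your step (3) would output, $\max\{\reg(R/(I,x_3)),\reg(R/(I,x_4))\}=5$, is strictly smaller than the true answer $\reg(\C_6)=6$ predicted by the theorem (and confirmed by the Betti-splitting computation with $J=(x_6x_1^5)$). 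A cleverer choice of $j$ can rescue this particular example ($j=1$ works), but your proposal neither identifies which $j$ to take nor shows a good $j$ always exists, and that is exactly the missing content.

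The paper sidesteps this by splitting off an edge rather than a vertex: it chooses a non-trivial weight \emph{preceded} by a trivial one (WLOG $w_n=1$, $w_1\ge 2$) and applies the Betti splitting of \Cref{cor:bettieq} with $J=(x_nx_1^{w_1})$ and $K$ the remaining generators, so that $\reg(\C_n)=\max\{w_1,\reg(R/K),\reg(R/(J\cap K))-1\}$. Here $K$ is the edge ideal of a weighted naturally oriented path on all $n$ vertices (handled by \Cref{path:gen}), and $J\cap K=JL$ with $\reg(R/(J\cap K))-1=w_1+\reg(R/L)$ computed via \Cref{bigclaim3} or \Cref{path:gen}; the remainder of the proof is exactly the reconciliation you anticipate in your step (5), carried out with \Cref{cycle:maximal} (the analogue of \Cref{maximal} you correctly predicted is needed) and a case analysis on $q_t=n-1$ and $p_2=3$. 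Two smaller corrections: in your decomposition $(I,x_{j+1})$ is an honest path edge ideal, so \Cref{path:gen}, not \Cref{bigclaim3}, is the relevant tool — pure-power generators of the form $x_1^{w_1}$ only arise inside the paper's $J\cap K$ computation; and the case ``all non-source weights non-trivial'' is dispatched by \Cref{trivial} exactly as you say, so your steps (1), (4), (5) are sound in outline, but without repairing step (3) the proof does not go through.
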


Before we prove the theorem, we begin with a lemma analogous to \Cref{maximal}.

\begin{lemma}\label{cycle:maximal}
    If $\C_n$ is a  weighted naturally oriented cycle with weight sequence $\p$ and $\q\subseteq\q'\in\dS$ then
        \[
            \sum\q+{\BL{\frac{n+q_1-q_t}{3}}\BR}\leq\sum\q'+{\BL{\frac{n+q_1'-q_t'}{3}}\BR.}
        \]
    Moreover, we have 
     \begin{equation}\label{dotSmax}
        \max_{\q\in\dS}\left\{\sum\q+\BL\frac{n+q_1-q_t}{3}\BR\right\}=\max_{\substack{\q\in\dS\\q_1=1\text{ or }q_1= 3\\\text{or }q_t=n-1}}\left\{\sum\q+\BL\frac{n+q_1-q_t}{3}\BR\right\}.
    \end{equation}
\end{lemma}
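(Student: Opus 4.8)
The plan is to follow the same strategy used in the proof of \Cref{maximal} for paths, adapting it to the cyclic structure encoded in $\dS$. First I would prove the monotonicity statement: given $\q=(q_1,\dots,q_t)\subseteq\q'\in\dS$, it suffices by induction on $|\q'|-|\q|$ to handle the insertion of a single position $r$ into $\q$, in one of the three configurations $\q'=(q_1,\dots,q_j,r,q_{j+1},\dots,q_t)$, $\q'=(r,q_1,\dots,q_t)$, or $\q'=(q_1,\dots,q_t,r)$. In the interior case the argument is identical to the path case: $w_r\geq 2$ gives $w_r-1\geq 1$, the floor inequality $\BL\frac{q_{j+1}-q_j}{3}\BR\leq\BL\frac{q_{j+1}-r}{3}\BR+\BL\frac{r-q_j}{3}\BR+1$ absorbs the extra $-1$ from the larger value of $t$, and the boundary term $\BL\frac{n+q_1-q_t}{3}\BR$ is untouched. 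The two boundary cases (prepending or appending $r$) also leave the cyclic term changed in a controlled way; for instance appending $r$ replaces $\BL\frac{n+q_1-q_t}{3}\BR$ by $\BL\frac{n+q_1-r}{3}\BR$, and one checks that the gain $w_r$ in $\sum\q'$ together with the floor inequality $\BL\frac{r-q_t}{3}\BR+\BL\frac{n+q_1-r}{3}\BR+1\geq\BL\frac{n+q_1-q_t}{3}\BR$ (valid because $q_t<r$ keeps us inside one "cycle period", using $q_t-q_1\neq n-2$ and $r-q_1\neq n-2$ to stay in $\dS$) compensates the $-1$ from the increase in $t$. Chaining single insertions gives the first inequality of the lemma.

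For the "Moreover" part I would argue that any maximal element $\q$ of $\dS$ under inclusion must have $q_1\in\{1,3\}$ or $q_t=n-1$, after which the first inequality forces the maximum of $\sum\q+\BL\frac{n+q_1-q_t}{3}\BR$ to be attained at such a $\q$. To see this, suppose $\q=(q_1,\dots,q_t)$ is maximal but $q_1\notin\{1,3\}$ and $q_t\neq n-1$. Since $1=p_1$ is a weight position (recall we assume $p_1=1$ when the cycle has a trivial weight), maximality forbids prepending $1$, so we must have $q_1-1=2$, i.e. $q_1=3$ — a contradiction — unless prepending $1$ would leave $\dS$, i.e. $q_t-1=n-2$, i.e. $q_t=n-1$, again a contradiction. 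The only remaining possibility is $q_1=1$. A symmetric argument handles $q_t$. I should be careful here about the degenerate cases where $k$ is small or where $1$ already appears in $\q$; those are straightforward but need to be mentioned.

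The main obstacle I anticipate is the bookkeeping around the cyclic boundary term $\BL\frac{n+q_1-q_t}{3}\BR$: unlike the path, where the two boundary contributions $\BL\frac{q_1+1}{3}\BR$ and $\BL\frac{n-q_t}{3}\BR$ decouple cleanly, here a single term mixes both ends, and inserting a position near either end changes which "arc" of the cycle is being counted. One must verify that the relevant floor inequality $\BL\frac{a}{3}\BR+\BL\frac{b}{3}\BR+1\geq\BL\frac{a+b}{3}\BR$ is being applied to nonnegative arguments $a,b$ summing to the correct value — this is exactly where the defining conditions $q_{i+1}-q_i\neq 2$ and $q_t-q_1\neq n-2$ of $\dS$ are used to guarantee we never "wrap around twice". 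Once that is set up correctly the computation mirrors the path case line for line, so I would present the interior insertion in full detail and then remark that the boundary insertions follow by analogous manipulations, exactly as the authors do in the proof of \Cref{maximal}.
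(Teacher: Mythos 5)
Your proposal is correct and follows essentially the same route as the paper: interior insertions are handled exactly as in \Cref{maximal}, while prepending or appending a position $r$ is controlled by the floor inequalities $\BL\frac{n+q_1-q_t}{3}\BR\leq\BL\frac{n+r-q_t}{3}\BR+\BL\frac{q_1-r}{3}\BR+1$ and $\BL\frac{n+q_1-q_t}{3}\BR\leq\BL\frac{n+q_1-r}{3}\BR+\BL\frac{r-q_t}{3}\BR+1$ together with $w_r\geq 2$. Your argument for the ``moreover'' part (a maximal element of $\dS$ with $q_1\neq 3$ and $q_t\neq n-1$ must contain the position $1$, since otherwise $(1,q_1,\dots,q_t)\in\dS$) is precisely the paper's reasoning.
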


\begin{proof}
    We again first consider the situation with $\q=(q_1,\dots,q_t)\subseteq(q_1,\dots,q_j,r,q_{j+1},\dots,q_t)\in\dS.$ If $q_1<r<q_t$ then the proof proceeds exactly as in the proof of \Cref{maximal} so that
        \[
            \sum\q+\BL\frac{n+q_1-q_t}{3}\BR\leq\sum\q'+\BL\frac{n+q_1-q_t}{3}\BR. 
        \]
    For the case where $r<q_1$ the proof follows from the fact that $\BL\frac{n+q_1-q_t}{3}\BR\leq\BL\frac{n+r-q_t}{3}\BR+\BL\frac{q_1-r}{3}\BR+1.$ Similarly, when $r>q_t$ we have $\BL\frac{n+q_1-q_t}{3}\BR\leq\BL\frac{n+q_1-r}{3}\BR+\BL\frac{r-q_t}{3}\BR+1.$
    
    Extending this idea, we again see that the maximum element of $\left\{\sum\q+\BL\frac{n+q_1-q_t}{3}\BR~|~\q\in\dS\right\}$ will be attained at an element of $\dS$ of maximal length. Let us now consider when $(q_1,\dots,q_t)\in\dS$ is of maximal length. Suppose $q_t\neq n-1$ and $q_1\neq 3$ then we have $(1,q_1,\dots,q_t)\in\dS$ as $q_1-1\neq 2$ and $q_t-1\neq n-2.$ Thus we see that sequence $q\in\dS$ of maximal length will satisfy $q_1=1$, $q_1=3,$ or $q_t=n-1.$
    
\end{proof}

\begin{example}

 Let $\C_{10}^1$ and $\C_{10}^2$ be weighted oriented cycles with corresponding sequence sets $\dS_1$ and $\dS_2$ as defined in \Cref{cycleExamples}. These cycles have the same number of vertices, orientation, and total weight sum with their only difference being that the weight of vertex $x_3$ in $\C_{10}^1$ is moved to vertex $x_2$ in $\C_{10}^2$. 
 
 Given $\q=(q_1,\ldots, q_t)\in\dS_i$ for $i=1,2,$ define $g(\q)=\sum\q+\BL\frac{n+q_1-q_t}{3}\BR,$ the expression introduced in \Cref{cycle:maximal}. For each of the graphs $\C_{10}^1$ and $\C_{10}^2$, the following tables provides the value of $g(\q)$ for each maximal element of $\q\in\mfS_i$ for $i=1,2$.
    \[  
        \C_{10}^1: ~\begin{array}{r|cccccc}
            \q &(1,4,7) &  (1,6,7)   & \textbf{(3,4,7)} & \textbf{(3,4,9)} & (3,6,7) & (3,6,9) \\ \hline
            g(\q) & 11 & 7 & \textbf{12} &  \textbf{12} & 9 & 10
        \end{array}\]

        \[
        \C_{10}^2: ~\begin{array}{r|cccc}
            \q & (1,2,6,7) & \textbf{(1,4,7)} & (2,6,9) & (4,9) \\ \hline
            g(\q) & 10 & \textbf{11}  &   10 &  9
        \end{array}
    \]
 Here we see that the maximum value of $\sum\q+\BL\frac{n+q_1-q_t}{3}\BR$ for $\C_{10}^1$ comes from two maximal subsequences, namely  $(3,4,7)$ and $(3,4,9)$. The maximum value of $g(\textbf{q})$ comes from the subsequence $(1,4,7)$ for $\C_{10}^2$. Using Macaulay 2 \cite{M2}, we see that $\reg(\C_{10}^1)=12$ and $\reg(\C_{10}^2)=11$, providing evidence that the regularity of the graphs is given by $g(\q)$ which, as in the path case, depends on both the weight sequence and the values of the weights.
\end{example}
     
We now prove the main theorem of this section.

\begin{proof}[Proof of \Cref{cycle:gen}]
	If $w_i \geq 2$ for all $ 1\leq i \leq n,$ then the statement holds from \Cref{trivial}. Suppose that there exists at least one $j \in \{2,\ldots, n\}$ such that $w_j=1.$ Since $k\geq 1,$ we can always find at least one pair of consecutive vertices on the cycle such that one has trivial weight, and the other has nontrivial weight.  Without loss of generality, let $x_n,$ and $x_1$ be such a pair with $w_n=1$, and $w_1 \geq 2.$

	We proceed by using a Betti splitting to calculate the regularity of the cycle. Taking $i=1$ in the statement of \Cref{cor:bettieq}  results with a Betti splitting where $J=(x_nx_1^{w_1})$ and $K=(x_1x_2^{w_2},\ldots, x_{n-2}x_{n-1}^{w_{n-1}}, x_{n-1}x_n)$. Then
	        \[
            \reg(\mathcal{C}_n)=\max\{w_1, \reg(R/K), \reg(R/(J\cap K))-1\}.
        \] 
	Our goal is to show that maximum of $w_1,$ $\reg(R/K)$, and $\reg(R/(J\cap K))-1$ is equal to
	    \[
\max_{\substack{\q\in\dS\\q_1=1 \text{ or } q_1 =3\\ \text{ or } q_t =n-1}} \left\{\sum \q + {\BL{\frac{n+q_1-q_t}{3}}\BR} \right\}.
	    \]
	
	 It can be immediately verified that  $J\cap K = J L$ where 
		\[
			L=
				\begin{cases}
					(x_2^{w_2},x_2x_3^{w_3},\ldots,x_{n-3}x_{n-2}^{w_{n-2}},x_{n-1}) &\mbox{if }w_2 \neq 1\\		
					(x_2,x_3x_4^{w_4},\ldots,x_{n-3}x_{n-2}^{w_{n-2}},x_{n-1}) &\mbox{if }w_2 =1
				  \end{cases}.
	  	\]
    By \Cref{lem:disjoint}, we have 
		\begin{eqnarray}\label{eq:cycle1}
			\reg (R/(J\cap K))-1= w_1+\reg (R/L).
		\end{eqnarray}
	Note at this point we have that $\reg(R/(J\cap K))-1\geq w_1$ and thus
	    \[
	        \reg(\C_n)=\max\{\reg(R/K),\reg(R/(J\cap K)-1\}.
        \]

We first focus on $\reg(R/(J\cap K))-1.$ By \Cref{rem:Ix}, we can ignore the single variable generators and compute $\reg (R/L)$ by \Cref{bigclaim3} when $w_2 \neq 1$, i.e. $p_2=2$, and by \Cref{path:gen} when $w_2=1$, i.e. $p_2\geq 3$. Then, by Equation \ref{eq:cycle1}, we have 
		\begin{align*}
			\reg (R/(J\cap K))-1&= w_1+
				\begin{cases}
				\displaystyle
				    \max_{\substack{
				        (q_1\dots,q_t)\in\dS\\
				        q_1\geq p_2 \text{ and } q_t\neq n-1}} 
				    \left\{\sum \q + {\BL{\frac{n-3-(q_t-1)}{3}}\BR} +{\BL{\frac{q_1-1}{3}}\BR}\right\}  & \text{if } p_2 =2\\
				\displaystyle
					\max_{\substack{
					    (q_1,\dots,q_t)\in\dS\\
					    q_1\geq p_2 \text{ and } q_t\neq n-1}} 
					\left\{\sum \q  + {\BL{\frac{n-4-(q_t-2)}{3}}\BR} +{\BL{\frac{q_1-2+1}{3}}\BR}\right\}  & \text{if }p_2>3\\
				\displaystyle
					\max_{\substack{
					    (q_1,\dots,q_t)\in\dS\\ 
					    q_1\geq p_3 \text{ and } q_t\neq n-1}} 
					\left\{\sum \q  + {\BL{\frac{n-4-(q_t-2)}{3}}\BR} +{\BL{\frac{q_1-2+1}{3}}\BR}\right\}  &\text{if }p_2 =3
				\end{cases}\\
		    &= 
		        \begin{cases}
                \displaystyle
		                \max_{\substack{
		                (q_1,\dots,q_t)\in\dS\\
		                q_1\geq p_2 \text{ and } q_t\neq n-1}}	
		                \left\{ w_1+ \sum\q+\BL\frac{n-2-q_t}{3}\BR+\BL\frac{q_t-1}{3}\BR \right\} & \text{if } p_2\neq 3\\
                \displaystyle
		                \max_{\substack{
		                    (q_1,\dots,q_t)\in\dS\\
		                    q_1\geq p_3 \text{ and } q_t\neq n-1}}	
		                \left\{ w_1 +\sum\q+\BL\frac{n-2-q_t}{3}\BR+\BL\frac{q_t-1}{3}\BR \right\} & \text{if } p_2= 3
            \end{cases}
	    \end{align*}

Note that above expressions of which the maximums are taken are identical. Furthermore, none of the $(q_1, \ldots, q_t) \in \dS$ over which the maximums are taken can have $q_1 =3$ nor can they have $q_t=n-1,$ which is equivalent to saying $(1,q_1, \ldots, q_t) \in \dS$. By expanding the weight-sum formula in \Cref{weightsum} for any $(q_1,\ldots, q_t) \in \dS$ where $q_1 \neq 3$ and $q_t \neq n-1$ (so that $(1,q_1,\dots,q_t)\in\dS$) one can verify the following equality.
		\begin{eqnarray}\label{eq:cycle4}
			w_1+\sum (q_1,\ldots, q_t)  + {\BL{\frac{n-2-q_t}{3}}\BR} +{\BL{\frac{q_1-1}{3}}\BR}= 	\sum(1,q_1,\ldots, q_t) +{\BL{\frac{n+1-q_t}{3}}\BR}
		\end{eqnarray}
Thus, we can simplify $\reg (R/(J\cap K))-1$ to the following desired form
		\[
			\reg (R/(J\cap K))-1 = \max_{\substack{\q\in\dS\\q_1=1}} \left\{\sum \q + {\BL{\frac{n+1-q_t}{3}}\BR} \right\}.
		\]

 We now consider $\reg(R/K)$. Since $K$ is the edge ideal of a weighted oriented path on $n$ vertices with the weight sequence $(p_2,\ldots, p_k),$ by \Cref{path:gen}, we get
		\begin{equation}\label{RmodK}
			\reg (R/K)= \max_{\substack{\q\in\dot{\text{S}}\\q_1 \geq p_2}} \left\{\sum (q_1,\ldots, q_t) + {\BL{\frac{n-q_t}{3}}\BR} +{\BL{\frac{q_1+1}{3}}\BR}\right\}
		\end{equation} 
    where $q_t = n-1$ is possible.

    Next we show that maximums of the expressions computing $\reg(R/K)$ and $\reg(R/(J \cap K))-1$ yield the desired form as in the statement of the theorem.  
    In order to do that we consider the following expressions
        \[
            \sum\q + \BL\frac{n+q_1-q_t}{3}\BR \text{ with }  q_1=1,\text{ and }
        \]
        \[
            \sum\q + \BL\frac{n-q_t}{3}\BR+ \BL\frac{q_1+1}{3}\BR  \text{ with }  q_1\geq p_2
        \]
   where $\q \in \dS$ and $q_t=n-1$ is possible in the second form. Again note that the first expression is already of the desired form.
   If $q_1\geq p_2$ but $q_t\neq n-1$ then it is possible that  $(1,q_1,\dots,q_t)\in\dS$ as long as $p_2\neq 3$. In this case, Case 2(a) below, we can directly compare the two expressions and show that the larger of the two is of the desired form. 
   In each of the other cases where the expressions are not directly comparable (Case 1 being $q_t=n-1$ and Case 2(b) being $p_2=3$ with $q_t\neq n-1$) we show that the second expression can also be written in the desired form.

    \textit{Case 1:} Suppose $\q\in\dS$ such that $q_t=n-1$. In this case, we see that $\displaystyle \BL\frac{n+q_1-q_t}{3}\BR=\BL{\frac{q_1+1}{3}}\BR$ and $\displaystyle \BL\frac{n-q_t}{3}\BR=0$ giving us
        \[
            \sum\q+\BL\frac{n-q_t}{3}\BR+\BL\frac{q_1+1}{3}\BR=\sum\q+\BL\frac{n+q_1-q_t}{3}\BR.
        \]

   \textit{Case 2:}  Suppose $\q\in\dS$ such that $q_t\neq n-1$.  
   
   \textit{Case 2(a):} We first consider the subcase $p_2\neq 3$.  
   
Note that  $(q_1,\ldots, q_t) \in \dS$ where $q_1\geq p_2 $ and $q_t \neq n-1$ is equivalent to $(1,q_1,\ldots, q_t) \in \dS$ and
\begin{eqnarray*}
\sum\q + \BL\frac{n-q_t}{3}\BR+\BL\frac{q_1+1}{3}\BR  &\leq&   \sum\q + \BL\frac{n-q_t+1}{3}\BR+\BL\frac{q_1+2}{3}\BR  \\
&=& \sum\q + \BL\frac{n+1-q_t}{3}\BR+\BL\frac{q_1-1}{3}\BR +1 \\
&\leq&  \sum\q + \BL\frac{n+1-q_t}{3}\BR+\BL\frac{q_1-1}{3}\BR +w_1-1\\
&=&  \sum( 1, q_1,\ldots, q_t) + \BL\frac{n+1-q_t}{3}\BR
\end{eqnarray*}   
   
 where the last inequality follows from the assumption that $w_1 \geq 2.$
 
    \textit{Case 2(b):} We next consider the subcase $p_2 =3$.  Then, for $(q_1,\ldots, q_t) \in \dS$ where $q_1 = p_2=3,$ we have
    \begin{eqnarray*}
			\sum (q_1,\ldots, q_t) + {\BL{\frac{n-q_t}{3}}\BR} +{\BL{\frac{q_1+1}{3}}\BR} = \sum (q_1,\ldots, q_t) + {\BL{\frac{n+3-q_t}{3}}\BR}.
		\end{eqnarray*}
	
All the cases are now completed and we conclude the following
{\small\begin{align*}
    \reg (\C_n) 
        &= \max \{\reg(R/K), \reg (R/(J\cap K))-1  \}\\
        &= \max  \left\{  \max_{\substack{\q\in\dS\\q_1=1}} \left\{\sum \q + {\BL{\frac{n+1-q_t}{3}}\BR} \right\} , \max_{\substack{q\in\dot{\text{S}}\\q_1 \geq p_2}} \left\{\sum (q_1,\ldots, q_t) + {\BL{\frac{n-q_t}{3}}\BR} +{\BL{\frac{q_1+1}{3}}\BR}\right\}  \right\} \\
        &= \max  \left\{  \max_{\substack{\q\in\dS\\q_1=1}} \left\{\sum \q + {\BL{\frac{n+1-q_t}{3}}\BR} \right\} , 
        \max_{\substack{\q\in\dS\\q_t=n-1}} \left\{\sum \q + {\BL{\frac{n+q_1-q_t}{3}}\BR} \right\}, \max_{\substack{\q\in\dS\\q_1=3\\ q_t \neq n-1}} \left\{\sum \q + {\BL{\frac{n+3-q_t}{3}}\BR} \right\}  \right\} \\
&= \max_{\substack{\q\in\dS\\q_1=1 \text{ or } q_1 =3\\ \text{ or } q_t =n-1}} \left\{\sum \q + {\BL{\frac{n+q_1-q_t}{3}}\BR} \right\},
\end{align*} } 
 thus completing the proof.
	\end{proof}

Computer experiments suggest that it is a difficult task to provide a closed formula for the regularity and projective dimension of the edge ideal for an arbitrary weighted oriented graph. All evidence indicates that the positions of the non-trivial weights and the orientation of the graph play essential roles in obtaining formulas for these invariants. Even if we restrict to weighted oriented paths and cycles, but allow any orientation, the problem remains a difficult one.

\end{document}